\crefname{theorem}{TheoUpperCase}{TheoUppercasesS}
\crefname{theorem}{theoLowercase}{theoLowercaseS}
\DeclareFontFamily{U}{rcjhbltx}{}
\DeclareFontShape{U}{rcjhbltx}{m}{n}{<->rcjhbltx}{}
\DeclareSymbolFont{hebrewletters}{U}{rcjhbltx}{m}{n}
\let\aleph\relax\let\beth\relax
\let\gimel\relax\let\daleth\relax
\DeclareMathSymbol{\aleph}{\mathord}{hebrewletters}{39}
\DeclareMathSymbol{\beth}{\mathord}{hebrewletters}{98}
\DeclareMathSymbol{\gimel}{\mathord}{hebrewletters}{103}
\DeclareMathSymbol{\daleth}{\mathord}{hebrewletters}{100}
\DeclareMathSymbol{\lamed}{\mathord}{hebrewletters}{108}
\DeclareMathSymbol{\mem}{\mathord}{hebrewletters}{109}
\DeclareMathSymbol{\ayin}{\mathord}{hebrewletters}{96}
\DeclareMathSymbol{\tsadi}{\mathord}{hebrewletters}{118}
\DeclareMathSymbol{\qof}{\mathord}{hebrewletters}{113}
\DeclareMathSymbol{\shin}{\mathord}{hebrewletters}{152}
\newcommand{\updave}{\textup{\davidsstar}}
	\setlist[enumerate]{itemsep=3pt,topsep=3pt,
    labelsep=5pt, 
    leftmargin=29.40003pt,
	}
	\renewcommand{\qed}{\hfill \mbox{\raggedright \rule{0.1in}{0.1in}}}
	\renewcommand{\geq}{\geqslant}
	\renewcommand{\leq}{\leqslant}
	\renewcommand{\phi}{\varphi}
	\providecommand{\corollaryname}{Corollary}
	\providecommand{\definitionname}{Definition}
	\providecommand{\notationname}{Notation}
	\providecommand{\examplename}{Example}
	\providecommand{\lemmaname}{Lemma}
	\providecommand{\propositionname}{Proposition}
	\providecommand{\remarkname}{Remark}
	\providecommand{\theoremname}{Theorem}
	\providecommand{\setupname}{Setup}
	\providecommand{\conjecturename}{Conjecture}
	\providecommand{\questionname}{Question}
	\theoremstyle{plain}
		\newtheorem{thm}{\protect\theoremname}[section] 
		\newtheorem{prop}[thm]{\protect\propositionname}
		\newtheorem{lem}[thm]{\protect\lemmaname}
		\newtheorem{cor}[thm]{\protect\corollaryname}
		\newtheorem{thmx}{\protect\theoremname}
		\newtheorem{corx}[thmx]{\protect\corollaryname}
	\theoremstyle{definition}
		\newtheorem{defn}[thm]{\protect\definitionname}
		\newtheorem{notn}[thm]{\protect\notationname}
		    \crefname{notn}{notation}{notations}
		    \Crefname{notn}{Notation}{Notations}
		\newtheorem{example}[thm]{\protect\examplename}
		\newtheorem{setup}[thm]{\setupname}
		    \crefname{setup}{setup}{setups}
		    \Crefname{setup}{Setup}{Setups}
	\theoremstyle{remark}
		\newtheorem{rem}[thm]{\protect\remarkname}
	\numberwithin{figure}{section}
	\numberwithin{equation}{section}
	\tikzset{commutative diagrams/.cd, 
		mysymbol/.style = {start anchor=center, end anchor = center, draw = none}}
	\let\amph=& 
	\newcommand{\BE}{\mathbb{E}}
			\renewcommand{\BE}{\mathchoice
  			{\mathbb{E}}
  			{\mathbb{E}}
  			{\raisebox{-0.2pt}{\scalebox{.6}{\(\mathbb{E}\)}}}
  			{\scalebox{.5}{\(\mathbb{E}\)}}
		}
	\newcommand{\BF}{\mathbb{F}}
	\newcommand{\BZ}{\mathbb{Z}}
	\newcommand{\CA}{\mathcal{A}}
	\newcommand{\CC}{\mathcal{C}}
	\newcommand{\CD}{\mathcal{D}}
	\newcommand{\CI}{\mathcal{I}}
	\newcommand{\CN}{\mathcal{N}}
	\newcommand{\CS}{\mathcal{S}}
	\newcommand{\CT}{\mathcal{T}}
	\newcommand{\CX}{\mathcal{X}}
    \newcommand{\SA}{\mathscr{A}}
	\newcommand{\SE}{\mathscr{E}}
	\newcommand{\SF}{\mathscr{F}}
	\newcommand{\SG}{\mathscr{G}}
	\newcommand{\SH}{\mathscr{H}}
    \newcommand{\SL}{\mathscr{L}}
    \newcommand{\SM}{\mathscr{M}}
    \newcommand{\SN}{\mathscr{N}}
	\newcommand{\SQ}{\mathscr{Q}}
	\newcommand{\SY}{\mathscr{Y}}
		\newcommand{\Ab}{\operatorname{\mathsf{Ab}}\nolimits}
		\newcommand{\Exactcat}{\operatorname{\mathsf{Exact}}\nolimits}
		\newcommand{\exactcat}{\operatorname{\mathsf{exact}}\nolimits}
		\newcommand{\op}{\raisebox{0.5pt}{\scalebox{0.6}{\textup{op}}}}
		\newcommand{\sse}{\subseteq}
		\newcommand{\iso}{\cong}
		\newcommand{\niso}{\ncong}
		\newcommand{\Hom}{\operatorname{Hom}\nolimits}
		\newcommand{\into}{\hookrightarrow}
            \newcommand{\xmapsfrom}[2][]{%
            \ext@arrow3095\leftarrowfill@{#1}{#2}\mapsfromchar
            }
		\newcommand{\cok}{\operatorname{coker}\nolimits}
		\newcommand{\iden}[1]{\tensor[]{\mathrm{id}}{_{#1}}}
		\newcommand{\idfunc}[1]{\tensor[]{\mathrm{id}}{_{#1}}}
	    \newcommand\restr[2]{{\left.\kern-\nulldelimiterspace#1
						\right|_{#2}}}
        \newcommand{\pent}{\mathchoice
  			{\pentago}
  			{\pentago}
  			{\scalebox{.7}{\pentago}}
  			{\scalebox{.5}{\pentago}}
		}
		\newcommand{\inn}{\mathchoice
  			{\raisebox{0.6pt}{\(\hspace{3pt}\in\hspace{3pt}\)}}
  			{\raisebox{0.6pt}{\(\hspace{3pt}\in\hspace{3pt}\)}}
  			{\raisebox{0.5pt}{\scalebox{.5}{\hspace{1pt}\(\in\)\hspace{1pt}}}}
  			{\scalebox{.5}{\(\in\)}}
		}
		\newcommand{\Ext}{\operatorname{Ext}\nolimits}
		\newcommand{\dExt}[1]{\operatorname{-Ext}\nolimits(#1)}
		\newcommand{\yExtn}[2]{\operatorname{YExt}_{#1}^{#2}\nolimits}
		\newcommand{\fs}{\mathfrak{s}}
		\newcommand{\ft}{\mathfrak{t}}
		\newcommand{\sus}{\Sigma} 
		\newcommand{\NT}{\Xi}
		\newcommand{\rMod}[1]{\operatorname{\mathsf{Mod}\,--}\nolimits{#1}}
		\newcommand{\kom}{\mathsf{K}}
		\newcommand{\com}{\mathsf{C}}
		\newcommand{\Exang}[1]{{#1}\operatorname{-\,\mathsf{Exang}}\nolimits}
		\newcommand{\exang}[1]{{#1}\operatorname{-\,\mathsf{exang}}\nolimits}
		\newcommand{\cone}{\mathrm{MC}}
		\newcommand{\combul}{\raisebox{0.5pt}{\scalebox{0.6}{\(\bullet\)}}}
		\let\amsamp=&
	\newcommand{\deff}{\coloneqq}
	\newcommand{\lan}{\langle}
	\newcommand{\ran}{\rangle}
	\newcommand{\wt}[1]{\widetilde{#1}}
	\newcommand{\ol}[1]{\overline{#1}}
	\renewcommand{\andify}{%
		\nxandlist{\unskip, }{\unskip{} \@@and~}{\unskip \penalty-2 \space \@@and~}}
	\renewcommand\author@andify{%
  		\nxandlist {\unskip ,\penalty-1 \space\ignorespaces}%
		{\unskip {} \@@and~}%
		{\unskip \penalty-2 \space \@@and~}
	}
	    \newenvironment{acknowledgements}{%
	    \renewcommand\abstractname{\textbf{Acknowledgements}}
\global\setbox\abstractbox=\vtop \bgroup
\normalfont\Small
\list{}{\labelwidth\z@
\leftmargin3pc \rightmargin\leftmargin
\listparindent\normalparindent \itemindent\z@
\parsep\z@ \@plus\p@

}%
\item[\hskip\labelsep\scshape\abstractname.]%
}{%
\endlist\egroup
\ifx\@setabstract\relax \@setabstracta \fi
}
\let\oldtocsection=\tocsection
\let\oldtocsubsection=\tocsubsection
\renewcommand{\tocsection}[2]{\hspace{0em}\vspace*{1pt}\oldtocsection{#1}{#2}}
    \renewcommand{\tocsubsection}[2]{\hspace{21pt}\vspace*{0pt}\oldtocsubsection{#1}{#2}}
\begin{document}
\title[Categories of extensions and \(n\)-exangulated functors]{The category of extensions and a characterisation of \(n\)-exangulated functors}

    \author[Bennett-Tennenhaus]{Raphael Bennett-Tennenhaus}
         \address{
		Department of Mathematics\\
		Aarhus University\\
		8000 Aarhus C\\
		Denmark}
        \email{raphaelbennetttennenhaus@gmail.com}
        
    \author[Haugland]{Johanne Haugland}
        \address{Department of Mathematical Sciences\\ 
        NTNU\\ 
        NO-7491 Trondheim\\ 
        Norway}
        \email{johanne.haugland@ntnu.no}
        
    \author[Sand{\o{}}y]{Mads Hustad Sand{\o{}}y}
         \address{Department of Mathematical Sciences\\ 
         NTNU\\ 
         NO-7491 Trondheim\\ 
         Norway}
        \email{mads.sandoy@ntnu.no}
        
    \author[Shah]{Amit Shah}
        \address{
		Department of Mathematics\\
		Aarhus University\\
		8000 Aarhus C\\
		Denmark}
        \email{amit.shah@math.au.dk}
\date{\today}
\keywords{%
Additive category, 
biadditive functor, 
category of extensions, 
extriangulated category, 
extriangulated functor, 
\(n\)-exangulated category, 
\(n\)-exangulated functor,
\(2\)-category,
\(2\)-functor}
\subjclass[2020]{Primary 18E05; Secondary 18E10, 18G80, 18N10}

{\setstretch{1}
\begin{abstract}
Additive categories play a fundamental role in mathematics and related disciplines. 
Given an
additive category 
equipped with a biadditive functor,
one can 
construct 
its category of extensions, 
which 
encodes important structural information. 
We study how functors between categories of extensions relate to those at the level of the 
original 
categories. 
When the additive categories in question are $n$-exangulated, this leads to a characterisation of $n$-exangulated functors.

Our approach enables us to study $n$-exangulated categories from a $2$-categorical perspective. 
We introduce 
$n$-exangulated natural transformations 
and characterise them 
using
categories of extensions. 
Our characterisations allow us to establish a $2$-functor 
between the $2$-categories of 
small $n$-exangulated categories and 
small exact categories. 
A similar result with no smallness assumption is also proved.

We employ our theory to produce various examples of $n$-exangulated functors and natural transformations. 
Although the motivation for this article stems from representation theory and the study of $n$-exangulated categories, our results are widely applicable: several require only an additive category equipped with a biadditive functor with no extra assumptions; others can be applied by endowing an additive category with its split $n$-exangulated structure.
\end{abstract}}

\maketitle

\vspace{-0.5cm}
    {\setstretch{1}
    \tableofcontents}
\vspace{-1cm}


\section{Introduction}
\label{sec:introduction}

\emph{Additive categories} appear in various branches of mathematics and related disciplines. For the more algebraically inclined mathematician, perhaps the category of abelian groups is the prototypical example; for the more analytical, perhaps the category of Banach spaces over the real numbers; and the geometer may opt for some category of sheaves. The unsuspecting theoretical physicist might uncover that certain additive categories 
control possibilities in 
string theory or, more broadly, particle physics. Phenomena of this last kind demonstrate the power of category theory and the importance of its study.

The motivation for this article stems from algebra, yet many of the results are widely applicable. 
Indeed, 
as we build up our theory, 
we ask only for an additive category \(\CC\) equipped with a biadditive functor \(\BE(-,-) \colon \CC^{\op}\times\CC \to \Ab\), where \(\Ab\) denotes the category of abelian groups. 
As an example, for any additive category \(\CC\), one can already take \(\BE(C,A)\) to be the abelian group \(\CC(C,A)\) of morphisms \(C\to A\); see \cref{example:arrow-category-as-category-of-extensions}. More interesting choices of \(\BE\) can be made depending on the structure of \(\CC\); see \cref{sec:examples-of-n-exangulated-categories-and-functors}.

For now, let us focus on two examples from classical homological algebra, namely \emph{abelian categories} and \emph{triangulated categories}. 
If \(\CC\) is a skeletally small abelian (or exact) category, then a possible choice for \(\BE\) is the functor \(\Ext_{\CC}^{1}\). In this case, the abelian group \(\BE(C,A) = \Ext_{\CC}^{1}(C,A)\) consists of equivalence classes of short exact sequences in \(\CC\) of the form 
\(
\begin{tikzcd}[column sep=0.5cm]
0 \arrow{r} &A \arrow{r}& - \arrow{r}& C\arrow{r}&0.
\end{tikzcd}
\)
If \(\CC\) is a triangulated category with suspension functor \(\sus\), then one could set \(\BE(C,A) = \CC(C,\sus A)\), which is in bijection with equivalence classes of distinguished triangles of the form 
\(
\begin{tikzcd}[column sep=0.5cm]
A \arrow{r} &- \arrow{r}& C \arrow{r}& \sus A.
\end{tikzcd}
\) 
In both these examples, we see that the bifunctor \(\BE\) encodes the basic building blocks of the homological structure of \(\CC\).

Nakaoka--Palu \cite{NakaokaPalu-extriangulated-categories-hovey-twin-cotorsion-pairs-and-model-structures} recently used the observations above to establish the theory of \emph{extriangulated categories}, giving a simultaneous generalisation of exact and triangulated categories.  
An extriangulated category consists of a triplet \((\CC,\BE,\fs)\), where \(\CC\) is an additive category, \(\BE\colon\CC^{\op}\times\CC \to \Ab\) is a biadditive functor 
and \(\fs\) is a \emph{realisation} (see \cref{def:exact-realisation}), satisfying certain axioms. 
This new framework has proven significance: 
aside from permitting the unification and extension of many known results 
(see e.g.\ 
\cite{HassounShah-integral-and-quasi-abelian-hearts-of-twin-cotorsion-pairs-on-extriangulated-categories,Haugland-the-grothendieck-group-of-an-n-exangulated-category,LiuYNakaoka-hearts-of-twin-cotorsion-pairs-on-extriangulated-categories,Msapato-the-karoubi-envelope-and-weak-idempotent-completion-of-an-extriangulated-category}), 
it has led to novel insights and explained previously mysterious connections 
(see e.g.\ 
\cite{JorgensenShah-grothendieck-groups-of-d-exangulated-categories-and-a-modified-CC-map,JorgensenShah-the-index-with-respect-to-a-rigid-subcategory,PadrolPaluPilaudPlamondon-associahedra-for-finite-type-cluster-algebras-and-minimal-relations-between-g-vectors}). 
This again underlines the benefits of abstraction in mathematics.

The realisation \(\fs\) of an extriangulated category \((\CC,\BE,\fs)\) encapsulates a core idea from classical homological algebra, namely that one \emph{realises} each \(\delta\inn\BE(C,A)\) by an equivalence class 
\(\fs(\delta)
    = [\begin{tikzcd}[column sep=0.5cm]
    A \arrow{r} & B \arrow{r}& C
    \end{tikzcd}]
\)
of a pair of composable morphisms.
Hence, the realisation allows us to visualise the structure encoded by \(\BE\) via \(3\)-term sequences. 
For an integer \(n\geq 1\), Iyama \cite{Iyama-higher-dimensional-auslander-reiten-theory-on-maximal-orthogonal-subcategories} discovered that \emph{\(n\)-cluster tilting} subcategories of module categories exhibit 
structures reminiscent of those from classical homological algebra, but now involving longer sequences. 
It was 
demonstrated 
that \((n+2)\)-term sequences could be used to study a higher-dimensional analogue of Auslander--Reiten theory in such settings. 
Jasso \cite{Jasso-n-abelian-and-n-exact-categories} formalised this idea by introducing \(n\)-exact and \(n\)-abelian categories. 
Similar observations in the triangulated setting (see e.g.\ 
\cite{Iyama-higher-dimensional-auslander-reiten-theory-on-maximal-orthogonal-subcategories},
\cite{Iyama-cluster-tilting-for-higher-auslander-algebras}, 
\cite{IyamaYoshino-mutation-in-tri-cats-rigid-CM-mods}) motivated the axiomatisation of \emph{\((n+2)\)-angulated categories} by Geiss--Keller--Oppermann \cite{GeissKellerOppermann-n-angulated-categories}. 
These formal frameworks constitute what has become known as \emph{higher homological algebra}, where the case \(n=1\) recovers classical theory of exact, abelian and triangulated categories. Higher homological algebra is linked to modern developments in various branches of mathematics, ranging from 
    representation theory, 
    cluster theory, 
    commutative algebra, 
    algebraic geometry, 
    homological mirror symmetry and 
    symplectic geometry 
to 
    string theory, 
    conformal field theory and 
    combinatorics 
(see e.g.\ 
\cite{%
    AmiotIyamaReiten-stable-categories-of-Cohen-Macaulay-modules-and-cluster-categories,%
    Dyckerhoff-Jasso-Lekili-the-symplectic-geometry-of-higher-auslander-algebras-symmetric-products-of-disks,%
    EvansPugh-the-Nakayama-automorphism-of-the-almost-Calabi-Yau-algebras-associated-to-SU3-modular-invariants,%
    GeissKellerOppermann-n-angulated-categories,%
    HauglandSandoy-higher-Koszul-duality-and-connections-with-n-hereditary-algebras,%
    HerschendIyamaMinamotoOppermann-representation-theory-ofGeigle-Lenzing-complete-intersections,%
    IyamaWemyss-maximal-modifications-and-Auslander-Reiten-duality-for-non-isolated-singularities,%
    Jorgensen-tropical-friezes-and-the-index-in-higher-homological-algebra,%
    OppermannThomas-higher-dimensional-cluster-combinatorics-and-representation-theory,%
    Williams-new-interpretations-of-the-higher-stasheff-tamari-orders%
    }).

A central idea in the higher setup is that a suitable \(n\)-cluster tilting subcategory \(\CT\) of an abelian (resp.\ triangulated) category \(\CA\) is \(n\)-abelian \cite[Thm.~3.16]{Jasso-n-abelian-and-n-exact-categories} 
(resp.\ \((n+2)\)-angulated \cite[Thm.~1]{GeissKellerOppermann-n-angulated-categories}). 
Each admissible \(n\)-exact sequence 
\(
\begin{tikzcd}[column sep=0.5cm]
A \arrow{r} & X^{1} \arrow{r}& \cdots\arrow{r} & X^{n} \arrow{r} & C
\end{tikzcd}
\)
in the \(n\)-abelian category \(\CT\) 
is obtained by splicing together \(n\) short exact sequences 
\(
\begin{tikzcd}[column sep=0.35cm]
Y^{i-1} \arrow[hook]{r} & X^{i} \arrow[two heads]{r}&  Y^{i}
\end{tikzcd}
\) 
from \(\CA\) as indicated in the following diagram
\begin{equation}\label{eqn:splicing}
\tag{\(*\)}
\begin{tikzcd}[column sep=0.5cm, row sep=0.3cm]
    A \arrow{rr} \arrow[equal]{dr} & & X^{1} \arrow{rr} \arrow[two heads]{dr} & & X^{2}\arrow[two heads]{dr}\arrow{r} & \hspace{0.2cm}\cdots\hspace{0.2cm}\arrow{r} & X^{n} \arrow{rr} \arrow[two heads]{dr} && C. \\
    & Y^{0} \arrow[hook]{ur} && Y^{1} \arrow[hook]{ur} && \hspace{0.5cm}\cdots\hspace{0.5cm}\arrow[hook]{ur} && Y^{n} \arrow[equal]{ur}
\end{tikzcd}
\end{equation}
In this way, the higher structure of \(\CT\) is compatible with the classical 
structure of \(\CA\).

Of course, now one asks: What does ``compatible'' \emph{formally} mean? 
The answer is work in preparation by the authors (see 
\cite{Bennett-TennenhausHauglandSandoyShah-structure-preserving-functors-between-higher-exangulated-categories,Bennett-TennenhausHauglandSandoyShah-the-category-of-extensions-and-the-Krull-Remak-Schmidt-property}), but is inspired by the results and methodology from the present article. 
The approach taken involves the higher analogue of extriangulated categories, namely 
\emph{\(n\)-exangulated categories} in the sense of Herschend--Liu--Nakaoka \cite{HerschendLiuNakaoka-n-exangulated-categories-I-definitions-and-fundamental-properties} (see \cref{def:n-exangulated-category}). 
As for extriangulated categories, an \(n\)-exangulated category consists of a triplet \((\CC,\BE,\fs)\) where \(\CC\) is an additive category and \(\BE \colon \CC^{\op}\times\CC \to \Ab\) a biadditive functor. 
For each pair of objects \(A,C\inn\CC\) and to each 
\emph{extension} 
\(\delta\inn\BE(C,A)\), the realisation associates 
an equivalence class 
\(
\fs(\delta) 
    = [X^{\combul}]
    = [\begin{tikzcd}[column sep=0.5cm]
    A 
    \arrow{r}
    & X^{1}
    \arrow{r}
    &\cdots
    \arrow{r}
    & X^{n}
    \arrow{r}
    & C
    \end{tikzcd}]
\) 
of an \((n+2)\)-term sequence.
Each \((n+2)\)-angulated and \(n\)-exact category is \(n\)-exangulated 
(see 
Examples~\ref{example:n+2-angulated-category-is-n-exangulated} and \ref{example:n-exact-category-is-n-exangulated}), and a category is extriangulated if and only if it is \(1\)-exangulated 
(see 
\cref{example:extriangulated-is-1-exangulated}).

Structure-preserving functors between \(n\)-exangulated categories have been formalised recently 
in \cite{Bennett-TennenhausShah-transport-of-structure-in-higher-homological-algebra}. 
Given \(n\)-exangulated categories \((\CC,\BE,\fs)\) and \((\CC',\BE',\fs')\), an \emph{\(n\)-exangulated functor} \((\SF,\Gamma) \colon (\CC,\BE,\fs) \to (\CC',\BE',\fs')\) is a pair 
consisting of an additive functor \(\SF\colon \CC\to \CC'\) and a natural transformation 
\(\Gamma 
    \colon \BE(-,-) \Rightarrow \BE'(\SF-,\SF-)
\) 
satisfying a 
certain condition (see \cref{def:n-exangulated-functor}). 
Compatibility of structures can be naturally expressed by means of \(n\)-exangulated functors in the case where the domain and codomain categories are both \(n\)-exangulated for the same \(n\).

Let us return to the example of the inclusion functor \(\CT\into \CA\) of an \(n\)-cluster tilting subcategory \(\CT\) into an ambient abelian 
category \(\CA\). 
As soon as \(n > 1\), we have that \(\CT\) and \(\CA\) are higher abelian categories of differing ``dimension'', and hence the established notion of an \mbox{\(n\)-exangulated} functor does not apply. 
This demonstrates the need for 
terminology that allows one to describe compatibility of structures also in this more general setup. 
A naive 
attempt
to fill this gap might be to define an \emph{\((n,1)\)-exangulated functor} from an \(n\)-exangulated category \((\CC,\BE,\fs)\) to a \(1\)-exangulated category \((\CC',\BE',\fs')\) as a pair \((\SF,\Gamma)\), where \(\SF\colon \CC \to \CC'\) is an additive functor as before, but where \(\Gamma\) is now a natural transformation 
from \(\BE\) to an \(n\)-fold product arising from \(\BE'\) 
satisfying some compatibility conditions. 
For instance, in the situation of \eqref{eqn:splicing} above, 
one would want 
\(\Gamma\) to take one equivalence class 
\(
\delta
    = [\begin{tikzcd}[column sep=0.5cm]
    A \arrow{r} & X^{1} \arrow{r} &\cdots\arrow{r}& X^{n} \arrow{r} & C
    \end{tikzcd}]
\) 
of an admissible \(n\)-exact sequence 
to an \(n\)-tuple 
\((\rho_{n},\ldots,\rho_{1})\) 
of equivalence classes of 
short exact sequences with 
\(
\rho_{i} = [\begin{tikzcd}[column sep=0.5cm]
Y^{i-1} \arrow[hook]{r} & X^{i} \arrow[two heads]{r}&  Y^{i}
\end{tikzcd}]
\).
However, the careful reader quickly spots that for \(n>1\) there cannot 
be a 
natural transformation 
\(\Gamma\) of this kind, as 
the domain and codomain of \(\Gamma\) are functors with different domains. 
Furthermore, note that although we have focused on the \((n,1)\)-case above for expository purposes, 
we more generally aim to study \((n,q)\)-exangulated functors for \(q\geq 1\).

Instead of expecting to describe compatibility of \(n\)-exangulated and \(q\)-exangulated structures by use of a natural transformation \(\Gamma\) as above,
our main result of \cref{section:3} (see \cref{thmx:characterisation-of-n-exangulated-functors})
opens another avenue of approach. In this result, 
we characterise 
\(n\)-exangulated functors 
via the corresponding \emph{categories of extensions}. 
Given an additive category \(\CC\) with a biadditive functor \(\BE\colon\CC^{\op}\times\CC \to \Ab\), there is an associated category denoted by \(\BE\dExt{\CC}\), which has as its objects  
extensions \(\delta\inn\BE(C,A)\)
as \(A,C\) vary over objects in \(\CC\). 
For the unexplained terminology
used in \cref{thmx:characterisation-of-n-exangulated-functors}, see Definitions~\ref{def:respects-morphisms-over-SF} and \ref{def:SE-respects-distinguished-n-exangle-over-SF}.

\begin{thmx}[See \cref{thm:characterisation-of-n-exangulated-functors}]
\label{thmx:characterisation-of-n-exangulated-functors}  
Let \((\CC,\BE,\fs)\) and \((\CC',\BE',\fs')\) be \(n\)-exangulated categories. 
Then there is a one-to-one correspondence
\[
\begin{adjustbox}{
scale=0.93,
center}
\hspace{10pt}$\displaystyle
\begin{aligned}[t]
    \Set{
        \begin{array}{c}
            n\textit{-exangulated functors}\\
            (\SF,\Gamma)\colon (\CC,\BE,\fs) \to (\CC',\BE',\fs')
        \end{array}
    }
& \longleftrightarrow 
    \Set{
        \begin{array}{c}
            \textit{pairs } (\SF,\SE) \textit{ of additive functors }\SF \colon \CC \to \CC'\\\textit{and } \SE\colon\BE\dExt{\CC}\to\BE'\dExt{\CC'}\textit{, where } \SE \textit{ respects}\\\textit{ morphisms and distinguished }
            n\textit{-exangles over } \SF
        \end{array}
    }.
\end{aligned}$
\end{adjustbox}
\]
\end{thmx}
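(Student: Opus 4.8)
The plan is to establish the bijection by writing down explicit assignments in each direction and checking that they are mutually inverse, while keeping the purely bifunctorial content separate from the realisation-theoretic content. Throughout, recall that an object of \(\BE\dExt{\CC}\) is an extension \(\delta\inn\BE(C,A)\), and that a morphism \(\delta\to\delta'\) (with \(\delta'\inn\BE(C',A')\)) is a pair \((a,c)\) of morphisms \(a\colon A\to A'\) and \(c\colon C\to C'\) in \(\CC\) satisfying \(\BE(C,a)(\delta)=\BE(c,A')(\delta')\).

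For the forward direction, suppose \((\SF,\Gamma)\) is an \(n\)-exangulated functor and define \(\SE=\SE_{\Gamma}\colon\BE\dExt{\CC}\to\BE'\dExt{\CC'}\) by sending each object \(\delta\inn\BE(C,A)\) to \(\Gamma_{(C,A)}(\delta)\inn\BE'(\SF C,\SF A)\), and each morphism \((a,c)\) to \((\SF a,\SF c)\). That \((\SF a,\SF c)\) is again a morphism of extensions, i.e.\ \(\BE'(\SF C,\SF a)(\SE\delta)=\BE'(\SF c,\SF A')(\SE\delta')\), is exactly the naturality of \(\Gamma\) in its two variables; functoriality of \(\SE_{\Gamma}\) is then inherited from that of \(\SF\), and \(\SE_{\Gamma}\) respects morphisms over \(\SF\) by construction. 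Finally, unwinding \cref{def:SE-respects-distinguished-n-exangle-over-SF}, the statement that \(\SE_{\Gamma}\) respects distinguished \(n\)-exangles over \(\SF\) asserts precisely that \(\fs(\delta)=[X^{\combul}]\) forces \(\fs'(\Gamma_{(C,A)}(\delta))=[\SF X^{\combul}]\); this is the defining condition making \((\SF,\Gamma)\) an \(n\)-exangulated functor in \cref{def:n-exangulated-functor}, so no further work is needed here.

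For the backward direction, start from a pair \((\SF,\SE)\) with \(\SE\) respecting morphisms over \(\SF\). Unwinding \cref{def:respects-morphisms-over-SF}, \(\SE\) sends each \(\delta\inn\BE(C,A)\) to an extension in \(\BE'(\SF C,\SF A)\) and acts as \(\SF\) on component morphisms; define \(\Gamma_{(C,A)}\colon\BE(C,A)\to\BE'(\SF C,\SF A)\) to be this object-assignment. Naturality of \(\Gamma_{\SE}\) in each variable is obtained by applying \(\SE\) to the morphisms of extensions \((\iden{A},c)\) and \((a,\iden{C})\) and using that \(\SE\) acts as \(\SF\) on components. The step I expect to be the main obstacle is verifying that each \(\Gamma_{(C,A)}\) is a group homomorphism: this is genuinely extra content beyond functoriality, and it is where the additive structure on \(\BE\dExt{\CC}\) must be exploited. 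The point is that the Baer sum on \(\BE(C,A)\) is encoded categorically, namely \(\delta_{1}+\delta_{2}\) arises from the biproduct extension \(\delta_{1}\oplus\delta_{2}\inn\BE(C\oplus C,A\oplus A)\) by the structural morphisms given by the diagonal \(\Delta_{C}\) and codiagonal \(\nabla_{A}\). Since \(\SE\) is additive, it preserves biproducts and sends these structural morphisms to their \(\SF\)-images, and one reads off \(\Gamma_{(C,A)}(\delta_{1}+\delta_{2})=\Gamma_{(C,A)}(\delta_{1})+\Gamma_{(C,A)}(\delta_{2})\). (If a bijection at this purely additive level is isolated as an earlier lemma relating natural transformations \(\BE\Rightarrow\BE'(\SF-,\SF-)\) to functors respecting morphisms over \(\SF\), this entire paragraph may instead be invoked from there.) The \(n\)-exangulated condition on \((\SF,\Gamma_{\SE})\) then follows from \(\SE\) respecting distinguished \(n\)-exangles over \(\SF\), reversing the identification used in the forward direction.

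It remains to check that the two assignments are mutually inverse, and this is now essentially bookkeeping. Starting from \(\Gamma\), forming \(\SE_{\Gamma}\), and extracting \(\Gamma_{\SE_{\Gamma}}\) returns \(\Gamma\) on the nose, since the component \(\Gamma_{\SE_{\Gamma},(C,A)}\) is by definition the object-assignment of \(\SE_{\Gamma}\), which is \(\Gamma_{(C,A)}\). Conversely, \(\SE_{\Gamma_{\SE}}=\SE\) because the two functors agree on objects by the definition of \(\Gamma_{\SE}\), and agree on morphisms because both act as \((\SF a,\SF c)\) on a morphism \((a,c)\), the latter precisely because \(\SE\) respects morphisms over \(\SF\). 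Thus the correspondence \((\SF,\Gamma)\leftrightarrow(\SF,\SE)\) is a bijection, with the group-homomorphism verification of the previous paragraph being the only substantial step and everything else reducing to naturality and the two definitions of respecting structure over \(\SF\).
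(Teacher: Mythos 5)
Your proposal is correct and follows essentially the same route as the paper: the paper isolates your ``purely additive level'' bijection as a separate proposition (natural transformations \(\Gamma\) correspond to additive functors \(\SE\) respecting morphisms over \(\SF\)), proves the group-homomorphism step exactly via the Baer-sum/biproduct/diagonal--codiagonal encoding you describe, and then the theorem reduces to matching the \(n\)-exangulated condition on \((\SF,\Gamma)\) with \(\SE\) respecting distinguished \(n\)-exangles over \(\SF\), plus the bookkeeping check that the two assignments are mutually inverse. The only point you gloss is that \(\SE(\delta)\inn\BE'(\SF C,\SF A)\) is not literally part of the definition of respecting morphisms (which constrains only the components of morphisms, not the object assignment) but follows from applying \(\SE\) to the identity \((\iden{A},\iden{C})\) --- a one-line argument the paper records as a separate lemma.
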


The category \(\BE\dExt{\CC}\) comes equipped with an exact structure \(\CX_{\BE}\) 
determined by the sections and retractions in \(\CC\); see \cref{prop:ECC-is-a-category} and \cref{rem:lit-review-of-the-category-of-extensions}. 
Furthermore, if \(\SF\colon \CC\to \CC'\) is an additive functor, then any functor \(\SE\colon \BE\dExt{\CC}\to\BE'\dExt{\CC'}\) which respects morphisms over \(\SF\) satisfies \(\SE(\CX_{\BE})\sse \CX_{\BE'}\), i.e.\ \(\SE\) is exact; see \cref{prop:F-additive-iff-E-additive-iff-E-exact}.

In addition to permitting a new perspective on the problem of defining structure-preserving functors between higher exangulated categories of possibly different dimensions, the 
one-to-one correspondence above 
is 
interesting in its own right. 
From \cref{thmx:characterisation-of-n-exangulated-functors} 
we deduce \cref{corx:criterion-for-n-exangulated-functor}, which provides a characterisation of what it means for an additive functor between \(n\)-exangulated categories to be \(n\)-exangulated.
This 
is a useful tool for detecting \(n\)-exangulated functors, because 
it is often easier to 
observe that distinguished \(n\)-exangles are 
sent to distinguished \(n\)-exangles in a functorial way, 
than to check that the corresponding natural transformation 
is indeed natural; 
see 
\cref{example:frobenius-n-exact-to-n-angulated-quotient,example:rest-yoneda-extrian-func}.

\begin{corx}
\label{corx:criterion-for-n-exangulated-functor}
Let \((\CC,\BE,\fs)\) and \((\CC',\BE',\fs')\) be \(n\)-exangulated categories. 
For an additive functor \(\SF\colon \CC \to \CC'\), the following statements are equivalent.
\begin{enumerate}[label=\textup{(\roman*)}]
    \item There exists a natural transformation \(\Gamma\colon\BE(-,-) \Rightarrow\BE'(\SF-,\SF-)\) such that the pair \((\SF,\Gamma)\) is an \(n\)-exangulated functor. 
    \item There exists an additive functor \(\SE\colon\BE\dExt{\CC}\to\BE'\dExt{\CC'}\) which respects both morphisms and distinguished \(n\)-exangles over \(\SF\).
\end{enumerate}
\end{corx}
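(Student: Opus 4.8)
The plan is to obtain this corollary as a direct consequence of \cref{thmx:characterisation-of-n-exangulated-functors}. The essential point is that the one-to-one correspondence established there preserves the first component: an \(n\)-exangulated functor \((\SF,\Gamma)\) is matched with a pair \((\SF,\SE)\) carrying the \emph{same} additive functor \(\SF\), and the passage \(\Gamma \mapsto \SE\) (and its inverse) leaves \(\SF\) untouched. Once this is in hand, the equivalence (i)\(\iff\)(ii) is purely a matter of comparing when the relevant sets are inhabited.

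Concretely, I would fix the additive functor \(\SF\colon \CC \to \CC'\) and restrict the bijection of \cref{thmx:characterisation-of-n-exangulated-functors} to those pairs whose first component equals \(\SF\). This restriction yields a bijection between the set of natural transformations \(\Gamma\colon \BE(-,-)\Rightarrow\BE'(\SF-,\SF-)\) for which \((\SF,\Gamma)\) is an \(n\)-exangulated functor, and the set of additive functors \(\SE\colon \BE\dExt{\CC}\to \BE'\dExt{\CC'}\) respecting morphisms and distinguished \(n\)-exangles over \(\SF\). A bijection between two sets shows in particular that one is nonempty exactly when the other is. Since the existence of an admissible \(\Gamma\) is precisely statement (i) and the existence of an admissible \(\SE\) is precisely statement (ii), the equivalence (i)\(\iff\)(ii) follows at once.

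I expect no substantial obstacle here, as the work is entirely absorbed into \cref{thmx:characterisation-of-n-exangulated-functors}; the only items meriting explicit mention are bookkeeping. First, one should confirm that the correspondence genuinely fixes \(\SF\), so that restricting to a single \(\SF\) is legitimate — this ought to be transparent from the construction of the bijection. Second, one should check that the additivity of \(\SE\) demanded in (ii) is consistent with the right-hand side of the correspondence; but this is automatic, since by \cref{prop:F-additive-iff-E-additive-iff-E-exact} the additivity of \(\SE\) is forced once \(\SF\) is additive and \(\SE\) respects morphisms over \(\SF\). With these two observations noted, the proof reduces to invoking the bijection and reading off the nonemptiness statement.
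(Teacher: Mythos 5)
Your proposal is correct and matches the paper's treatment: the paper likewise derives \cref{corx:criterion-for-n-exangulated-functor} as an immediate consequence of \cref{thm:characterisation-of-n-exangulated-functors}, since the bijection there fixes the first component \(\SF\), so restricting to a given \(\SF\) turns the correspondence into an equivalence of the two existence statements. Your two bookkeeping remarks (that the correspondence leaves \(\SF\) untouched, and that additivity of \(\SE\) is consistent via \cref{prop:F-additive-iff-E-additive-iff-E-exact}) are accurate and harmless additions.
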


In \cref{section:4} we study \(n\)-exangulated categories in a \(2\)-category-theoretic setting by considering morphisms of \(n\)-exangulated functors. We introduce a higher version of natural transformations of extriangulated functors as defined by Nakaoka--Ogawa--Sakai \cite[Def.~2.11(3)]{NakaokaOgawaSakai-localization-of-extriangulated-categories}, which we call  \emph{\(n\)-exangulated natural transformations}; see \cref{def:n-exangulated-natural-transformation}. 
Applying \cref{thmx:characterisation-of-n-exangulated-functors} and using the notation \(\tensor[]{\SE}{_{(\SF,\Gamma)}}\) for the 
exact functor 
\(
(\BE\dExt{\CC},\CX_{\BE})\to(\BE'\dExt{\CC'},\CX_{\BE'})
\) 
arising from an \(n\)-exangulated functor \((\SF,\Gamma)\colon (\CC,\BE,\fs) \to (\CC',\BE',\fs')\), we give a characterisation of \(n\)-exangulated natural transformations; see \cref{thmx:nat-tran-of-n-exan-functors-induces-nat-trans-of-functors-on-Ext-categories}. 
In the following we use the Hebrew letter \(\beth\) (beth). 
See \cref{def:balanced-natural-transformation} for the meaning of \emph{balanced}.

\begin{thmx}[See \cref{thm:characterisation-of-n-exangulated-natural-transformations}]
\label{thmx:nat-tran-of-n-exan-functors-induces-nat-trans-of-functors-on-Ext-categories}
Suppose \((\SF,\Gamma),(\SG,\Lambda)\colon (\CC,\BE,\fs)\to(\CC',\BE',\fs')\) are \(n\)-exan\-gu\-lat\-ed functors. 
Then there is a one-to-one correspondence between 
\(n\)-exangulated natural transformations 
\((\SF,\Gamma)\overset{\beth}{\Longrightarrow}(\SG,\Lambda)\) 
and balanced natural transformations 
\(\tensor[]{\SE}{_{(\SF,\Gamma)}}\overset{\lan\beth\ran}{\Longrightarrow}\tensor[]{\SE}{_{(\SG,\Lambda)}}\). 
\end{thmx}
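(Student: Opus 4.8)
The plan is to write down explicit mutually inverse assignments and, in each direction, to recognise the defining condition of one notion as a verbatim restatement of the defining condition of the other. Write $\beth_X \colon \SF X \to \SG X$ for the component at $X \in \CC$ of an $n$-exangulated natural transformation $\beth \colon (\SF,\Gamma)\Rightarrow(\SG,\Lambda)$, so that $\beth$ is in particular an ordinary natural transformation $\SF\Rightarrow\SG$ subject to the additional compatibility of \cref{def:n-exangulated-natural-transformation}. Recall that the exact functor $\SE_{(\SF,\Gamma)}$ of \cref{thmx:characterisation-of-n-exangulated-functors} sends an object $\delta\in\BE(C,A)$ of $\BE\dExt{\CC}$ to $\Gamma_{(C,A)}(\delta)\in\BE'(\SF C,\SF A)$ and a morphism $(a,c)$ to $(\SF a,\SF c)$, and similarly for $\SE_{(\SG,\Lambda)}$.

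For the forward assignment $\beth\mapsto\langle\beth\rangle$, I would set the component of $\langle\beth\rangle$ at $\delta\in\BE(C,A)$ to be the pair $(\beth_A,\beth_C)$. The first thing to check is that this pair really is a morphism $\SE_{(\SF,\Gamma)}(\delta)\to\SE_{(\SG,\Lambda)}(\delta)$ in $\BE'\dExt{\CC'}$: by the description of morphisms in a category of extensions (\cref{prop:ECC-is-a-category}) this amounts to the single equation $(\beth_A)_*\Gamma_{(C,A)}(\delta)=(\beth_C)^*\Lambda_{(C,A)}(\delta)$ in $\BE'(\SF C,\SG A)$, which is exactly the condition that $\beth$ be $n$-exangulated. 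Naturality of $\langle\beth\rangle$ against a morphism $(a,c)\colon\delta\to\delta'$ of $\BE\dExt{\CC}$ decomposes componentwise into the two naturality squares $\beth_{A'}\circ\SF a=\SG a\circ\beth_A$ and $\beth_{C'}\circ\SF c=\SG c\circ\beth_C$, which hold because $\beth$ is natural over $\CC$. That $\langle\beth\rangle$ is balanced is then immediate, its source and target components being the components of the single transformation $\beth$.

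For the backward assignment, a balanced natural transformation $\theta\colon\SE_{(\SF,\Gamma)}\Rightarrow\SE_{(\SG,\Lambda)}$ comes (\cref{def:balanced-natural-transformation}) with an underlying natural transformation $\beth\colon\SF\Rightarrow\SG$ recovering its source and target components, so that $\theta_\delta=(\beth_A,\beth_C)$ for all $\delta\in\BE(C,A)$. It then remains to see that this $\beth$ is $n$-exangulated, but this is forced by the mere fact that each $\theta_\delta$ is a morphism of $\BE'\dExt{\CC'}$, which again yields $(\beth_A)_*\Gamma_{(C,A)}(\delta)=(\beth_C)^*\Lambda_{(C,A)}(\delta)$. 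Since both assignments are given by the same componentwise rule $\delta\mapsto(\beth_A,\beth_C)$, they are visibly mutually inverse, and the stated correspondence follows.

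The only genuinely delicate point, and the step I expect to demand the most care, lies in the backward direction: confirming that the balanced datum really determines a single natural transformation $\beth$ that is defined and natural on all of $\CC$, not merely on the objects and morphisms made visible through $\BE\dExt{\CC}$. A priori a natural transformation of the functors $\SE$ records, for each extension $\delta$, an independent pair of morphisms whose source component need only be natural against those $a\colon A\to A'$ that extend to a morphism of extensions; the force of the balanced condition is to collapse this pair to one $\beth$ and to upgrade that partial naturality to honest naturality over $\CC$. Pinning $\beth$ down on every object (for instance by evaluating $\theta$ on split extensions) and then transporting the naturality of $\theta$ along morphisms of extensions is where the bookkeeping must be handled carefully; once the extraction $\theta\mapsto\beth$ is in place, the identification of the two compatibility conditions is a direct translation.
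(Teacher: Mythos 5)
Your proposal is correct and takes essentially the same route as the paper: the forward assignment \(\delta \mapsto (\tensor[]{\beth}{_{A}},\tensor[]{\beth}{_{C}})\) with the componentwise naturality check, the observation that being a morphism in \(\BE'\dExt{\CC'}\) is verbatim the \(n\)-exangulated condition \eqref{eqn:n-exangulated-natural-transformation-property}, and the backward extraction of a single transformation from a balanced one. The delicate extraction step you flag is precisely the paper's \cref{lem:natural-transformations-between-extension-functors}, which carries out your sketch by evaluating at the zero extensions \(\tensor[_{X}]{0}{_{0}}\inn\BE(0,X)\) and using that every morphism \(x\colon X\to Y\) of \(\CC\) induces a morphism \((x,0)\colon \tensor[_{X}]{0}{_{0}}\to\tensor[_{Y}]{0}{_{0}}\) of extensions, which upgrades the partial naturality you worry about to honest naturality over all of \(\CC\).
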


For \(n\geq 1\), we consider the category \(\Exang{n}\) of \emph{all} \(n\)-exangulated categories, which has properties just like a \(2\)-category (see e.g.\ \cref{prop:hom-category-of-n-exangulated-categories}). However, due to the set-theoretic issue outlined in \cref{rem:not-2-categories}, we cannot formally call \(\Exang{n}\) a \(2\)-category. 
If we consider instead \emph{small} categories, then we avoid such problems, and may talk of the \(2\)-category \(\exang{n}\) of small \(n\)-exangulated categories. 
We use the correspondences of 
Theorems~\ref{thmx:characterisation-of-n-exangulated-functors} and \ref{thmx:nat-tran-of-n-exan-functors-induces-nat-trans-of-functors-on-Ext-categories} to construct a \(2\)-functor from 
\(\exang{n}\) to the \(2\)-category \(\exactcat\) of small exact categories.

\begin{thmx}[See \cref{cor:finalcorollary}]
\label{thmx:finalcorollary}
There is \(2\)-functor \(\exang{n} \to \exactcat\), which sends a \(0\)-cell \((\CC,\BE,\fs)\) to \((\BE\dExt{\CC},\CX_{\BE})\), a \(1\)-cell \((\SF,\Gamma)\) to \(\tensor[]{\SE}{_{(\SF,\Gamma)}}\) and a \(2\)-cell \(\beth\) to \(\lan\beth\ran\).
\end{thmx}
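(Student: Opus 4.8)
The plan is to construct the claimed assignment cell-by-cell and then verify the strict \(2\)-functor axioms, drawing the well-definedness on \(1\)- and \(2\)-cells directly from \cref{thmx:characterisation-of-n-exangulated-functors,thmx:nat-tran-of-n-exan-functors-induces-nat-trans-of-functors-on-Ext-categories}. On \(0\)-cells, I would send \((\CC,\BE,\fs)\) to the pair \((\BE\dExt{\CC},\CX_{\BE})\), which is a small exact category by \cref{prop:ECC-is-a-category}; smallness is inherited from that of \(\CC\), since the objects of \(\BE\dExt{\CC}\) are the extensions \(\delta\inn\BE(C,A)\) indexed by the small set of pairs \(A,C\inn\CC\). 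On \(1\)-cells, I would send an \(n\)-exangulated functor \((\SF,\Gamma)\) to \(\tensor[]{\SE}{_{(\SF,\Gamma)}}\); this is well defined by the correspondence of \cref{thmx:characterisation-of-n-exangulated-functors}, and the functor so obtained respects morphisms over \(\SF\), hence is exact by \cref{prop:F-additive-iff-E-additive-iff-E-exact}, so it is a genuine \(1\)-cell of \(\exactcat\). On \(2\)-cells, I would send an \(n\)-exangulated natural transformation \(\beth\) to \(\lan\beth\ran\), which by \cref{thmx:nat-tran-of-n-exan-functors-induces-nat-trans-of-functors-on-Ext-categories} is a balanced natural transformation between the exact functors \(\tensor[]{\SE}{_{(\SF,\Gamma)}}\) and \(\tensor[]{\SE}{_{(\SG,\Lambda)}}\), and is therefore a \(2\)-cell of \(\exactcat\).

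Next I would check that, for each ordered pair of \(0\)-cells, the assignments \((\SF,\Gamma)\mapsto\tensor[]{\SE}{_{(\SF,\Gamma)}}\) and \(\beth\mapsto\lan\beth\ran\) together define a functor between the relevant hom-categories. Concretely, using the explicit description of \(\lan\beth\ran\) furnished in the proof of \cref{thmx:nat-tran-of-n-exan-functors-induces-nat-trans-of-functors-on-Ext-categories}, I would verify that \(\lan-\ran\) sends the identity \(n\)-exangulated natural transformation on \((\SF,\Gamma)\) to the identity natural transformation on \(\tensor[]{\SE}{_{(\SF,\Gamma)}}\), and that it respects vertical composition of \(2\)-cells. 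Since both \(\lan-\ran\) and its inverse are given by componentwise formulae, these identities reduce to straightforward componentwise comparisons.

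It then remains to verify the strict horizontal-composition axioms. For the identity \(1\)-cell on \((\CC,\BE,\fs)\), corresponding to the identity natural transformation on \(\BE\), I would confirm from the construction in \cref{thmx:characterisation-of-n-exangulated-functors} that \(\tensor[]{\SE}{_{\mathrm{id}}}\) is the identity functor on \((\BE\dExt{\CC},\CX_{\BE})\). For composable \(n\)-exangulated functors \((\SF,\Gamma)\) and \((\SG,\Lambda)\), whose composite carries the natural transformation with components \(\Lambda_{\SF C,\SF A}\circ\Gamma_{C,A}\), I would trace through the bijection of \cref{thmx:characterisation-of-n-exangulated-functors} to obtain, on an object \(\delta\inn\BE(C,A)\), the equality \(\tensor[]{\SE}{_{(\SG,\Lambda)\circ(\SF,\Gamma)}}(\delta)=\Lambda_{\SF C,\SF A}(\Gamma_{C,A}(\delta))=\tensor[]{\SE}{_{(\SG,\Lambda)}}(\tensor[]{\SE}{_{(\SF,\Gamma)}}(\delta))\), and similarly on morphisms, yielding \(\tensor[]{\SE}{_{(\SG,\Lambda)\circ(\SF,\Gamma)}}=\tensor[]{\SE}{_{(\SG,\Lambda)}}\circ\tensor[]{\SE}{_{(\SF,\Gamma)}}\). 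An analogous componentwise check would establish compatibility of \(\lan-\ran\) with horizontal composition (whiskering) of \(2\)-cells.

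I expect the main obstacle to be precisely this last step: establishing strict compatibility with horizontal composition at the level of both \(1\)- and \(2\)-cells, since this cannot be read off abstractly from the bijections but requires unwinding their explicit definitions and confirming that composition of \(n\)-exangulated functors, and horizontal composition of \(n\)-exangulated natural transformations, are transported to ordinary composition and horizontal composition of exact functors \emph{on the nose}, with no coherence isomorphism intervening. Once these equalities are verified componentwise, the strict \(2\)-functor axioms hold and \cref{cor:finalcorollary} follows; the restriction to the small setting \(\exang{n}\) rather than \(\Exang{n}\) is what guarantees that all the hom-categories in sight are genuine small categories, circumventing the set-theoretic difficulty noted in \cref{rem:not-2-categories}.
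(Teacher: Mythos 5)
Your proposal is correct and follows essentially the same route as the paper: the paper's proof proceeds via the more general \cref{thm:2-functor} for \(\Exang{n}\) (whose parts (i)--(iii) are exactly your hom-category functoriality check, your identity/composition compatibility for \(1\)-cells via \cref{lem:identity-composition-n-exangulated-functors-and-extension-functors-respect-compositions}, and your componentwise verification of horizontal composition using \cref{rem:equality-of-morphisms-of-extensions}) and then restricts to \(\exang{n}\), whereas you work directly in the small setting. The content of the verifications, and the key inputs (\cref{prop:ECC-is-a-category}, \cref{prop:F-additive-iff-E-additive-iff-E-exact}, \cref{thmx:characterisation-of-n-exangulated-functors}, \cref{thmx:nat-tran-of-n-exan-functors-induces-nat-trans-of-functors-on-Ext-categories}), coincide with the paper's.
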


\cref{thmx:finalcorollary} is a consequence of \cref{thm:2-functor}. This latter result is more general, 
in that one can construct a functor from \(\Exang{n}\) to the category \(\Exactcat\) of all exact categories which behaves just like the \(2\)-functor described in \cref{thmx:finalcorollary}. 
Ignoring  
\cref{rem:not-2-categories}, one should interpret \cref{thm:2-functor} as establishing a \(2\)-functor \(\Exang{n} \to \Exactcat\).

In \cref{sec:examples-of-n-exangulated-categories-and-functors} we provide several examples of \(n\)-exangulated categories, functors and natural transformations. 
Some of these examples also produce \(n\)-exangulated subcategories in the sense of \cite[Def.~3.7]{Haugland-the-grothendieck-group-of-an-n-exangulated-category}.

\medskip

\textbf{Conventions.}\; 
We write \(A\inn \CC\) to denote that an object \(A\) lies in a category \(\CC\). 
For \(A,B\inn \CC\), we write \(\CC(A,B)\) for the collection of morphisms \(A\to B\) in \(\CC\). 
Unless stated otherwise: our subcategories are always assumed to be full; and our functors are always assumed to be  covariant.  
We write \(\Ab\) for the category of abelian groups. 
Throughout this paper, let \(n\geq 1\) denote a positive integer. 


\section{Preliminaries on \texorpdfstring{\(n\)}{n}-exangulated categories}
\label{sec:n-exangulated-categories}

We follow \cite[Sec.\ 2]{HerschendLiuNakaoka-n-exangulated-categories-I-definitions-and-fundamental-properties} in 
briefly 
recalling the definition of an \(n\)-exangulated category, 
which is a higher analogue of an extriangulated category as introduced in \cite{NakaokaPalu-extriangulated-categories-hovey-twin-cotorsion-pairs-and-model-structures}. 
See also \cite{HerschendLiuNakaoka-n-exangulated-categories-II}.

\begin{setup}
\label{setup:2.1}
Throughout this section, we assume that \(\CC\) is an additive category and that \(\BE\colon\CC^{\op}\times\CC\to \Ab\) is a \emph{biadditive} functor.   
The latter means that for all \(A,C\inn\CC\), the functors \(\BE(C,-)\colon \CC \to \Ab\) and \(\BE(-,A)\colon \CC^{\op}\to\Ab\) are both additive. 
\end{setup}

Let \(A,C \inn\CC\) be arbitrary. 
The identity element of the abelian group \(\BE(C,A)\) is denoted by \(\tensor[_{A}]{0}{_{C}}\).  
An element \(\delta\) of \(\BE(C,A)\) is called an \emph{\(\BE\)-extension},
and we set
\[
\tensor[]{x}{_{\BE}}\delta\coloneqq\BE(C,x)(\delta)\inn\BE(C,X)
\hspace{1cm}\text{and} 
\hspace{1cm}
\tensor[]{z}{^{\BE}}\delta\coloneqq\BE(z,A)(\delta)\inn\BE(Z,A)
\]
for morphisms \(x\colon A\to X\) and \(z\colon Z\to C\) in \(\CC\). 
It follows that 
\(
\tensor[]{z}{^{\BE}}\tensor[]{x}{_{\BE}}\delta
    = \BE(z,x)(\delta)
    = \tensor[]{x}{_{\BE}}\tensor[]{z}{^{\BE}}\delta
\). 
Given \(\delta\inn\BE(C,A)\) and \(\rho\inn\BE(D,B)\), a \emph{morphism of \(\BE\)-extensions} from \(\delta\) to \(\rho\) is a pair \((a,c)\) of morphisms \(a\colon A\to B\) and \(c\colon C\to D\) in \(\CC\) such that 
\begin{equation}
\label{eqn:morphism-of-extensions-defining-property}
\tensor[]{a}{_{\BE}}\delta  
    = \tensor[]{c}{^{\BE}}\rho.
\end{equation}
If there is no confusion about the biadditive functor involved, \(\BE\)-extensions and morphisms of \(\BE\)-extensions are simply called \emph{extensions} and \emph{morphisms of extensions}, respectively. The Yoneda Lemma yields two natural transformations denoted and defined by
\[
\begin{array}{l}
\hspace{6.7pt}
\tensor*[_{\BE}]{\delta}{}\colon
 \CC(A,-) \Longrightarrow \BE(C,-)\\
\tensor*[_{\BE}]{\delta}{_{X}}\colon \hspace{33pt}x\longmapsto \tensor[]{x}{_{\BE}}\delta
\end{array}
\hspace{1cm}\text{and} 
\hspace{1cm}
\begin{array}{l}
\hspace{5.5pt}\tensor[^{\BE}]{\delta}{}\colon \CC(-,C)\Longrightarrow \BE(-,A)
\\ \tensor*[^{\BE}]{\delta}{_{Z}}\colon\hspace{34pt} z\longmapsto\tensor[]{z}{^{\BE}}\delta.
\end{array}
\]

In order to explain how to associate a homotopy class of a complex to each extension, we recall some 
terminology and notation.
We denote by \(\com_{\CC}^{\raisebox{0.5pt}{\scalebox{0.6}{\(n\)}}}\) the 
subcategory of the category of complexes \(\tensor[]{\com}{_{\CC}}\) in \(\CC\) consisting of complexes concentrated in degrees \(0,1,\ldots, n,n+1\). 
That is, if \(X^{\combul}\inn\com_{\CC}^{\raisebox{0.5pt}{\scalebox{0.6}{\(n\)}}}\), then \(X^{i}\) is zero if \(i<0\) or \(i>n+1\). 
We 
write such a complex as 
\[
\begin{tikzcd}
X^{0} \arrow{r}{\tensor*[]{d}{_{X}^{0}}} & X^{1} \arrow{r}{\tensor*[]{d}{_{X}^{1}}}& \cdots \arrow{r}{\tensor*[]{d}{_{X}^{n-1}}}& X^{n} \arrow{r}{\tensor*[]{d}{_{X}^{n}}} & X^{n+1}.
\end{tikzcd}
\]

\begin{defn}
\label{def:n-exangles}
(See \cite[Def.~2.13]{HerschendLiuNakaoka-n-exangulated-categories-I-definitions-and-fundamental-properties}.) 
Suppose \(X^{\combul}\inn\com_{\CC}^{\raisebox{0.5pt}{\scalebox{0.6}{\(n\)}}}\) and \(\delta\inn\BE(X^{n+1},X^{0})\).  
If
\[
\begin{tikzcd}[column sep=1.7cm]
\CC(-,X^{0})\arrow[Rightarrow]{r}[yshift=2pt]{\CC(-,\tensor*[]{d}{_{X}^{0}})}&\CC(-,X^{1})\arrow[Rightarrow]{r}[yshift=2pt]{\CC(-,\tensor*[]{d}{_{X}^{1}})}&\cdots\arrow[Rightarrow]{r}[yshift=2pt]{\CC(-,\tensor*[]{d}{_{X}^{n}})}&\CC(-,X^{n+1})\arrow[Rightarrow]{r}[yshift=2pt]{\tensor[^{\BE}]{\delta}{}}& \BE(-,X^{0})
\end{tikzcd}
\]
and 
\[
\begin{tikzcd}[column sep=1.7cm]
\CC(X^{n+1},-)\arrow[Rightarrow]{r}[yshift=2pt]{\CC(\tensor*[]{d}{_{X}^{n}},-)}&\CC(X^{n},-)\arrow[Rightarrow]{r}[yshift=2pt]{\CC(\tensor*[]{d}{_{X}^{n-1}},-)}&\cdots\arrow[Rightarrow]{r}[yshift=2pt]{\CC(\tensor*[]{d}{_{X}^{0}},-)}&\CC(X^{0},-)\arrow[Rightarrow]{r}[yshift=2pt]{\tensor[_{\BE}]{\delta}{}}& \BE(X^{n+1},-)
\end{tikzcd}
\]
are both exact sequences of functors, then we call the pair \(\lan X^{\combul},\delta\ran\) an \(n\)-\emph{exangle}.
\end{defn}

For 
\(A,C\inn\CC\) the not-necessarily-full subcategory 
\(
\com_{(A,C)}^{\raisebox{0.5pt}{\scalebox{0.6}{\(n\)}}}
\) 
of \(\com_{\CC}^{\raisebox{0.5pt}{\scalebox{0.6}{\(n\)}}}\) is defined as follows. Objects of \(\com_{(A,C)}^{\raisebox{0.5pt}{\scalebox{0.6}{\(n\)}}}\) are complexes \(X^{\combul}\inn\com_{\CC}^{\raisebox{0.5pt}{\scalebox{0.6}{\(n\)}}}\) with \(X^{0}=A\) and \(X^{n+1}=C\). 
Given \(X^{\combul}, Y^{\combul}\inn\com_{(A,C)}^{\raisebox{0.5pt}{\scalebox{0.6}{\(n\)}}}\), set
\[
\com_{(A,C)}^{\raisebox{0.5pt}{\scalebox{0.6}{\(n\)}}}(X^{\combul},Y^{\combul})\deff
\set{
f^{\combul} = (f^{0},\ldots, f^{n+1})\inn\com_{\CC}^{\raisebox{0.5pt}{\scalebox{0.6}{\(n\)}}}(X^{\combul},Y^{\combul})
|
f^{0}=\iden{A}\text{ and }f^{n+1}=\iden{C}
}.
\]
The usual notion of a \emph{homotopy} between morphisms of complexes restricts to give an equivalence relation \(\sim\) on \(\com_{(A,C)}^{\raisebox{0.5pt}{\scalebox{0.6}{\(n\)}}}(X^{\combul},Y^{\combul})\). 
This gives rise to a new category \(\kom_{(A,C)}^{\raisebox{0.5pt}{\scalebox{0.6}{\(n\)}}}\) with the same objects as \(\com_{(A,C)}^{\raisebox{0.5pt}{\scalebox{0.6}{\(n\)}}}\) and with
\(
\kom_{(A,C)}^{\raisebox{0.5pt}{\scalebox{0.6}{\(n\)}}}(X^{\combul},Y^{\combul})\deff \com_{(A,C)}^{\raisebox{0.5pt}{\scalebox{0.6}{\(n\)}}}(X^{\combul},Y^{\combul})/{\sim}.
\)
If the image of a morphism  \(f^{\combul}\inn\com_{(A,C)}^{\raisebox{0.5pt}{\scalebox{0.6}{\(n\)}}}(X^{\combul},Y^{\combul})\) in \(\kom_{(A,C)}^{\raisebox{0.5pt}{\scalebox{0.6}{\(n\)}}}(X^{\combul},Y^{\combul})\) is an isomorphism, then \(f^{\combul}\) is called a \emph{homotopy equivalence} and we say that \(X^{\combul}\) and \(Y^{\combul}\) are \emph{homotopy equivalent}. 
We denote the isomorphism class in \(\kom_{(A,C)}^{\raisebox{0.5pt}{\scalebox{0.6}{\(n\)}}}\) of an object \(X^{\combul}\) by \([X^{\combul}]\).

\begin{defn}
\label{def:exact-realisation}
(See \cite[Def.~2.22]{HerschendLiuNakaoka-n-exangulated-categories-I-definitions-and-fundamental-properties}.) 
Let \(\fs\) be a correspondence that, for each \(A,C\inn\CC\), associates to an extension  \(\delta\inn\BE(C,A)\) 
an isomorphism class \(\fs(\delta)=[X^{\combul}]\) 
in 
\(\kom_{(A,C)}^{\raisebox{0.5pt}{\scalebox{0.6}{\(n\)}}}\). 
Such an \(\fs\) is said to be an \emph{exact realisation of \(\BE\)} if the following conditions are satisfied. 

\begin{enumerate}[label=\textup{(R\arabic*)},
    labelsep=5pt, 
    leftmargin=35.00003pt,
]
\setcounter{enumi}{-1}

    \item\label{R0}
    Let \(\delta\inn\BE(C,A)\) and \(\rho\inn\BE(D,B)\) be extensions with  \(\fs(\delta)=[X^{\combul}]\) and \( \fs(\rho)=[Y^{\combul}]\). For any morphism of extensions \((a,c)\colon \delta\to\rho\), there exists \(f^{\combul}\inn\com_{\CC}^{\raisebox{0.5pt}{\scalebox{0.6}{\(n\)}}}(X^{\combul},Y^{\combul})\) such that \(f^{0}=a\) and \(f^{n+1}=c\). 
    In this setting, we say that \(X^{\combul}\) \emph{realises} \(\delta\) and \(f^{\combul}\)  \emph{realises} \((a,c)\).

    \item\label{R1}
    The pair \(\lan X^{\combul},\delta\ran\) is an \(n\)-exangle whenever \(\fs(\delta)=[X^{\combul}]\).

    \item\label{R2}
    For all \(A\inn\CC\), we have   
    \(\fs(\tensor[_{A}]{0}{_{0}})=[
    \begin{tikzcd}[column sep=1.2cm] A\arrow{r}{\iden{A}}&A\arrow{r}&0\arrow{r}&\cdots\arrow{r}&0\end{tikzcd}
    ]\) 
    and 
    \(\fs(\tensor[_{0}]{0}{_{A}})=[
    \begin{tikzcd}[column sep=1.2cm]
    0\arrow{r}&\cdots\arrow{r}&0\arrow{r}&A\arrow{r}{\iden{A}}&A\end{tikzcd}
    ]\).
\end{enumerate}
\end{defn}

If \(\fs\) is an exact realisation of \(\BE\) and 
\(
\fs(\delta) 
	= [X^{\combul}] 
	= [
	\begin{tikzcd}[column sep=0.55cm]
	X^{0} \arrow{r}{\tensor*[]{d}{_{X}^{0}}}& X^{1} \arrow{r}& \cdots \arrow{r}& X^{n}\arrow{r}{\tensor*[]{d}{_{X}^{n}}}& X^{n+1}
	\end{tikzcd}
	],
\)
then \(\tensor*[]{d}{_{X}^{0}}\) is known as an \emph{\(\fs\)-inflation} and \(\tensor*[]{d}{_{X}^{n}}\) as an \emph{\(\fs\)-deflation}. 

Before stating the main definition of this section, we recall the notion of a mapping cone.

\begin{defn}
\label{def:mapping-cone-as-in-Jasso}
(See \cite[Def.~2.27]{HerschendLiuNakaoka-n-exangulated-categories-I-definitions-and-fundamental-properties}.) 
Suppose 
\(
f^{\combul}\inn\com_{\CC}^{\raisebox{0.5pt}{\scalebox{0.6}{\(n\)}}}(X^{\combul},Y^{\combul})
\) with \(f^{0}=\iden{A}\) for some \(A=X^{0}=Y^{0}\). 
The \emph{mapping cone} \(M^{\combul}\deff \cone(f)^{\combul}\) of \(f^{\combul}\) is the complex
\[
\begin{tikzcd}
X^{1}
    \arrow{r}{d_{M}^{0}}
& X^{2}\oplus Y^{1} 
    \arrow{r}{d_{M}^{1}}
&X^{3}\oplus Y^{2}
    \arrow{r}{d_{M}^{2}}
& \cdots
    \arrow{r}{d_{M}^{n-1}}
&X^{n+1}\oplus Y^{n} 
    \arrow{r}{d_{M}^{n}}
& Y^{n+1}
\end{tikzcd}
\]
in \(\com_{\CC}^{\raisebox{0.5pt}{\scalebox{0.6}{\(n\)}}}\), 
where 
\(
d_{M}^{0}
    \deff 
        \begin{psmallmatrix}
        -\tensor*[]{d}{_{X}^{1}}\\
        f^{1} 
        \end{psmallmatrix}
\), 
\(
d_{M}^{n}
    \deff
        (\, f^{n+1} \;\, d_{Y}^{n} \,)
\), 
and 
\(
d_{M}^{i}
    \deff
        \begin{psmallmatrix}
        -d_{X}^{i+1} & 0\\
        f^{i+1} & d_{Y}^{i} 
        \end{psmallmatrix}
\) 
when \(0< i <n\).
\end{defn}

\begin{defn}
\label{def:n-exangulated-category}
(See \cite[Def.~2.32]{HerschendLiuNakaoka-n-exangulated-categories-I-definitions-and-fundamental-properties}.) 
Let \(\CC\) be an additive category, 
\(\BE\colon\CC^{\op}\times\CC\to \Ab\) a biadditive functor 
and \(\fs\) an exact realisation of \(\BE\). 
The triplet \((\CC,\BE,\fs)\) is called an \emph{\(n\)-exangulated category} if the following conditions are satisfied.

\begin{enumerate}[label=\textup{(EA\arabic*)},
wide=0pt, leftmargin=46pt, labelwidth=41pt, labelsep=5pt, align=right]

    \item\label{nEA1}
    The composition of any two \(\fs\)-inflations is again an \(\fs\)-inflation, and the composition of any two \(\fs\)-deflations is again an \(\fs\)-deflation.
        
    \item\label{nEA2}
    For any \(\delta\inn\BE(D,A)\) and any \(c\inn\CC(C,D)\) with \(\fs(\tensor[]{c}{^{\BE}}\delta)=[X^{\combul}]\) and \(\fs(\delta)=[Y^{\combul}]\), there exists a morphism  \(f^{\combul}\colon X^{\combul}\to Y^{\combul}\) realising \((\iden{A},c)\) such that \(\fs((\tensor*[]{d}{_{X}^{0}}\tensor[]{)}{_{\BE}}\delta)=[ \cone(f)^{\combul}]\).

    \item[\textup{(EA}\(2^{\op}\))]\label{nEA2op}
    Dual of \ref{nEA2}.
    
\end{enumerate}
\end{defn}

If \((\CC,\BE,\fs)\) is an \(n\)-exangulated category and \(\fs(\delta)=[X^{\combul}]\) for an extension \(\delta\inn\BE(C,A)\), then we call \(\lan X^{\combul},\delta\ran\) a \emph{distinguished} \(n\)-exangle.


\section{The category of extensions and 
\texorpdfstring{\(n\)}{n}-exangulated functors}
\label{section:3}

Our main result in \cref{section:3} is  \cref{thm:characterisation-of-n-exangulated-functors}, which is \cref{thmx:characterisation-of-n-exangulated-functors}  from \cref{sec:introduction}. In \cref{subsection:3.1} we recall the definition of the category of extensions associated to an additive category equipped with a biadditive functor. 
In Subsection~\ref{subsection:3.2} we characterise natural transformations of a certain form; see Setup~\ref{setup:3} and \cref{prop:functor-on-extensions}. 
In Subsection~\ref{subsection:3.3} we use this characterisation 
to prove \cref{thm:characterisation-of-n-exangulated-functors}.


\subsection{The category of extensions}
\label{subsection:3.1}

For this subsection, assume that \(\CC\) is an additive category and that \(\BE\colon\CC^{\op}\times\CC\to \Ab\) is a biadditive functor (see Setup~\ref{setup:2.1}). 
The \emph{category \(\BE\dExt{\CC}\) of \(\BE\)-extensions} was considered in \cite[Def.~2.3]{NakaokaPalu-extriangulated-categories-hovey-twin-cotorsion-pairs-and-model-structures}. 
The authors thank Thomas Br{\"{u}}stle for informing them 
that similar ideas already appeared in the literature before; see \cref{rem:lit-review-of-the-category-of-extensions}.

The objects and morphisms of \(\BE\dExt{\CC}\) are given by \(\BE\)-extensions and morphisms of \(\BE\)-extensions, respectively, 
as defined in \cref{sec:n-exangulated-categories}. 
Recall that, for \(\delta\inn\BE(C,A)\) and \(\rho\inn\BE(D,B)\), a morphism \(\delta\to\rho\) of \(\BE\)-extensions is a pair \((a,c)\) of morphisms \(a\colon A\to B\) and \(c\colon C\to D\) in \(\CC\) such that 
\(
\tensor[]{a}{_{\BE}}\delta
    = \tensor[]{c}{^{\BE}}\rho;
\) 
see \eqref{eqn:morphism-of-extensions-defining-property}. 
If \(\delta\) is an extension in \(\BE(C,A)\), then the identity morphism \(\iden{\delta}\) of \(\delta\) is given by the pair \((\iden{A},\iden{C})\). The composition of morphisms \((a,c)\colon\delta\to\rho\) and \((b,d)\colon\rho\to\eta\) in \(\BE\dExt{\CC}\) is the pair \((ba,dc)\). It is straightforward to check that \((ba,dc)\) is again a morphism of extensions and that \(\BE\dExt{\CC}\) is a category under this composition rule.

As one might expect, the category of \(\BE\)-extensions is an additive category. 
Moreover, motivated by 
\cite[Sec.\ 9.1, Exam.\ 5]{GabrielRoiter-reps-of-finite-dimensional-algebras}, 
we show that \(\BE\dExt{\CC}\) can be equipped with an exact structure \(\CX_{\BE}\) that is not necessarily the split exact structure. Suppose \(\delta\inn\BE(C,A)\),
\(\rho'\inn\BE(D,B)\) and 
\(\eta\inn\BE(C',A')\). 
We declare that a sequence 
\begin{equation}
\label{eqn:conflation-in-exact-structure}
\begin{tikzcd}
\delta \arrow{r}{(a,c)}& \rho' \arrow{r}{(b,d)}& \eta
\end{tikzcd}
\end{equation}
of morphisms in \(\BE\dExt{\CC}\) lies in \(\CX_{\BE}\) 
if and only if 
\(a\) and \(c\) are both sections with \(b=\cok a\) and \(d=\cok c\).
It follows from \cite[Prop.\ 2.7]{Shah-AR-theory-quasi-abelian-cats-KS-cats} that 
\eqref{eqn:conflation-in-exact-structure} belongs to \(\CX_{\BE}\) 
if and only if 
\(b\) and \(d\) are both retractions with \(a=\ker b\) and \(c=\ker d\).  
This is again equivalent to the underlying sequences 
\(
\begin{tikzcd}[column sep=0.5cm]
A \arrow{r}{a}& B\arrow{r}{b} & A'
\end{tikzcd}
\) 
and 
\(
\begin{tikzcd}[column sep=0.5cm]
C \arrow{r}{c}& D\arrow{r}{d} & C'
\end{tikzcd}
\) 
being split exact in \(\CC\). We call a sequence \eqref{eqn:conflation-in-exact-structure} belonging to \(\CX_{\BE}\) a \emph{conflation}, and in this case the morphism \((a,c)\) an \emph{inflation} and \((b,d)\) a \emph{deflation}. Notice that \(\CX_{\BE}\) is closed under isomorphisms.

We use column and row notation \(\tensor[]{\iota}{_{X}} = \begin{psmallmatrix} \iden{X} \\ 0\end{psmallmatrix} \colon X \to X \oplus Y\) and \(\tensor[]{\pi}{_{X}} = \begin{psmallmatrix}
\iden{X} & 0 
\end{psmallmatrix} \colon X \oplus Y \to X\) for the canonical inclusion and projection, respectively, associated to the biproduct of two objects \(X\) and \(Y\) in the additive category \(\CC\). 
Then, given a conflation \eqref{eqn:conflation-in-exact-structure}, 
there are isomorphisms \(h\colon B \to A\oplus A'\) and \(g\colon D\to C\oplus C'\) in \(\CC\) 
such that \eqref{eqn:conflation-in-exact-structure} is isomorphic to 
\begin{equation}
\label{eqn:conflation-in-exact-structure-canonical-split}
\begin{tikzcd}[column sep=1.6cm]
\delta 
    \arrow{r}{(\tensor[]{\iota}{_{A}},\tensor[]{\iota}{_{C}})}
&\rho
    \arrow{r}{(\tensor[]{\pi}{_{A'}},\tensor[]{\pi}{_{C'}})}
&\eta,
\end{tikzcd}
\end{equation} 
where \(\rho = (g^{-1}\tensor[]{)}{^{\BE}}\tensor[]{h}{_{\BE}}\rho' \). 
Moreover, the sequence \eqref{eqn:conflation-in-exact-structure-canonical-split} also lies in \(\CX_{\BE}\). 
If \((a,c)\) is a morphism 
in \(\BE\dExt{\CC}\)
consisting of a pair of sections both admitting cokernels, 
it is not a priori clear that it is an inflation. The following lemma verifies this.

\begin{lem}
\label{lem:every-pair-of-sections-is-an-inflation}
Let \((a,c)\colon\delta \to \rho'\) be a morphism in \(\BE\dExt{\CC}\) for \(\delta\inn\BE(C,A)\) and
\(\rho'\inn\BE(D,B)\). Suppose
\(a\) and \(c\) are sections with cokernels \(b=\cok a\) and \(d=\cok c\). 
Then \((a,c)\) completes to a kernel-cokernel pair  \eqref{eqn:conflation-in-exact-structure}, which is in \(\CX_{\BE}\). 
\end{lem}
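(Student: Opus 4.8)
The plan is to manufacture the missing codomain \(\eta\) together with a deflation \((b,d)\colon\rho'\to\eta\) from the data we are handed, and then to verify directly that the resulting sequence is a kernel-cokernel pair lying in \(\CX_{\BE}\). Write \(b=\cok a\colon B\to A'\) and \(d=\cok c\colon D\to C'\), so that the sought conflation should have the form \(\delta\xrightarrow{(a,c)}\rho'\xrightarrow{(b,d)}\eta\) with \(\eta\inn\BE(C',A')\). Since \(a\) and \(c\) are sections with these cokernels, the underlying sequences split in \(\CC\); I would fix a retraction \(a^{*}\) of \(a\) and a section \(d^{*}\) of \(d\), together with a retraction \(c^{*}\) of \(c\), and record the splitting identity \(d^{*}d=\iden{D}-cc^{*}\). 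With these in hand I define the candidate codomain \(\eta\deff\tensor[]{(d^{*})}{^{\BE}}\tensor[]{b}{_{\BE}}\rho'\inn\BE(C',A')\).

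The crux of the argument is to show that \((b,d)\) is genuinely a morphism of extensions \(\rho'\to\eta\), i.e.\ that \(\tensor[]{b}{_{\BE}}\rho'=\tensor[]{d}{^{\BE}}\eta\). Expanding the right-hand side and using the splitting identity gives \(\tensor[]{d}{^{\BE}}\eta=\tensor[]{(d^{*}d)}{^{\BE}}\tensor[]{b}{_{\BE}}\rho'=\tensor[]{b}{_{\BE}}\rho'-\tensor[]{(cc^{*})}{^{\BE}}\tensor[]{b}{_{\BE}}\rho'\), so it suffices to show the second term vanishes. Here the two hypotheses combine exactly: because \((a,c)\) is a morphism of extensions we have \(\tensor[]{c}{^{\BE}}\rho'=\tensor[]{a}{_{\BE}}\delta\), and because \(b=\cok a\) we have \(ba=0\); hence \(\tensor[]{c}{^{\BE}}\tensor[]{b}{_{\BE}}\rho'=\tensor[]{b}{_{\BE}}\tensor[]{c}{^{\BE}}\rho'=\tensor[]{b}{_{\BE}}\tensor[]{a}{_{\BE}}\delta=\tensor[]{(ba)}{_{\BE}}\delta=0\), which forces \(\tensor[]{(cc^{*})}{^{\BE}}\tensor[]{b}{_{\BE}}\rho'=\tensor[]{(c^{*})}{^{\BE}}\tensor[]{c}{^{\BE}}\tensor[]{b}{_{\BE}}\rho'=0\). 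I expect this compatibility check to be the main obstacle: it is the only point where the defining relation of a morphism of extensions and the cokernel relation \(ba=0\) are played off against one another, and it is what guarantees that \(\tensor[]{b}{_{\BE}}\rho'\) lies in the image of \(\tensor[]{d}{^{\BE}}\).

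Once \((b,d)\colon\rho'\to\eta\) is known to be a morphism, the sequence \(\delta\xrightarrow{(a,c)}\rho'\xrightarrow{(b,d)}\eta\) satisfies the defining condition of \(\CX_{\BE}\) (a pair of sections followed by their cokernels), so it belongs to \(\CX_{\BE}\). It then remains to verify that it is a kernel-cokernel pair in \(\BE\dExt{\CC}\). For the cokernel property, given \((u,v)\colon\rho'\to\tau\) with \((u,v)\circ(a,c)=0\), that is \(ua=0\) and \(vc=0\), the universal properties of \(b=\cok a\) and \(d=\cok c\) in \(\CC\) yield unique \(u',v'\) with \(u=u'b\) and \(v=v'd\); substituting into \(\tensor[]{u}{_{\BE}}\rho'=\tensor[]{v}{^{\BE}}\tau\), replacing \(\tensor[]{b}{_{\BE}}\rho'\) by \(\tensor[]{d}{^{\BE}}\eta\), and cancelling the split monomorphism \(\tensor[]{d}{^{\BE}}\) (split because \(d\) is a retraction) shows that \((u',v')\colon\eta\to\tau\) is again a morphism of extensions, whence \((b,d)=\cok(a,c)\). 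Dually, using that a section is the kernel of its cokernel, so \(a=\ker b\) and \(c=\ker d\), together with the injectivity of the split monomorphism \(\tensor[]{a}{_{\BE}}\), one checks \((a,c)=\ker(b,d)\). This exhibits the kernel-cokernel pair and completes the argument; alternatively, one could at this stage invoke the discussion preceding the lemma to identify the conflation with its canonical split form, but the direct verification above is self-contained.
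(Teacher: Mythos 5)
Your proof is correct and is essentially the paper's argument: both construct \(\eta\) as a pushforward--pullback of \(\rho'\) (your \(\eta = (d^{*}\tensor[]{)}{^{\BE}}\tensor[]{b}{_{\BE}}\rho'\) specialises, after the paper's preliminary reduction to \(B=A\oplus A'\), \(D=C\oplus C'\) and \((a,c)=(\tensor[]{\iota}{_{A}},\tensor[]{\iota}{_{C}})\), to its \(\eta = (\tensor[]{\iota}{_{C'}}\tensor[]{)}{^{\BE}}(\tensor[]{\pi}{_{A'}}\tensor[]{)}{_{\BE}}\rho\)), both verify that \((b,d)\) is a morphism of extensions by playing the relation \(\tensor[]{a}{_{\BE}}\delta = \tensor[]{c}{^{\BE}}\rho'\) off against \(ba=0\), and both check the kernel--cokernel universal properties by cancelling the injective maps induced by the split monomorphisms. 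The only difference is organisational: you work coordinate-free with chosen splittings---where one should note that \(d^{*}\) cannot be picked independently of \(c^{*}\), but is the unique section satisfying \(d^{*}d = \iden{D}-cc^{*}\), whose existence uses \(d=\cok c\)---whereas the paper first normalises to the canonical biproduct form and then runs the same computation with the identity \(\iden{D} = \tensor[]{\iota}{_{C}}\tensor[]{\pi}{_{C}}+\tensor[]{\iota}{_{C'}}\tensor[]{\pi}{_{C'}}\).
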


\begin{proof}
By our remarks above, we may assume \(B = A\oplus A'\), \(D = C\oplus C'\) 
and 
that \((a,c)\) is of the form \((\tensor[]{\iota}{_{A}},\tensor[]{\iota}{_{C}})\colon \delta \to \rho\). 
Consider the sequence \eqref{eqn:conflation-in-exact-structure-canonical-split} with \(\eta 
    \deff (\tensor[]{\iota}{_{C'}}\tensor[]{)}{^{\BE}}
            (\tensor[]{\pi}{_{A'}}\tensor[]{)}{_{\BE}}\rho
\). 
Using that  
\((\tensor[]{\iota}{_{A}},\tensor[]{\iota}{_{C}})
\) is a morphism in \(\BE\dExt{\CC}\) 
and writing \(\iden{D}\) as \(\tensor[]{\iota}{_{C}}\tensor[]{\pi}{_{C}}+\tensor[]{\iota}{_{C'}}\tensor[]{\pi}{_{C'}}\), 
it is straightforward to check that
\( ( \tensor[]{\pi}{_{A'}} , \tensor[]{\pi}{_{C'}} ) \) 
is a morphism of extensions \(\rho \to \eta\). 
It follows that 
\eqref{eqn:conflation-in-exact-structure-canonical-split} 
is in \(\CX_{\BE}\).

To show that 
\eqref{eqn:conflation-in-exact-structure-canonical-split} is a kernel-cokernel pair, we first observe that 
\((\tensor[]{\pi}{_{A'}},\tensor[]{\pi}{_{C'}})(\tensor[]{\iota}{_{A}},\tensor[]{\iota}{_{C}}) = 0\). 
Let \(\alpha\inn \BE(Z,X)\) 
be an extension 
and consider a morphism 
\(\left(
\begin{psmallmatrix}
x\\ x'
\end{psmallmatrix},
\begin{psmallmatrix}
z\\ z'
\end{psmallmatrix}\right)
\colon \alpha \to \rho
\) 
in \(\BE\dExt{\CC}\) 
with 
\(
( \tensor[]{\pi}{_{A'}} , \tensor[]{\pi}{_{C'}} )
\left(
\begin{psmallmatrix}
x\\ x'
\end{psmallmatrix},
\begin{psmallmatrix}
z\\ z'
\end{psmallmatrix}
\right)
 = (0,0)
\). 
This implies that \(x'\) and \(z'\) are zero. 
We claim that \((x,z)\) is a morphism of extensions \(\alpha \to \delta\). Indeed, we have 
\[
(\tensor[]{\iota}{_{A}}\tensor[]{)}{_{\BE}} 
    \tensor[]{x}{_{\BE}}\alpha
    = \tensor[]{\begin{psmallmatrix}
        x\\0
    \end{psmallmatrix}}{_{\BE}}\alpha
    = \tensor[]{\begin{psmallmatrix}
        x\\x'
    \end{psmallmatrix}}{_{\BE}}\alpha
    = \tensor[]{\begin{psmallmatrix}
        z\\z'
    \end{psmallmatrix}}{^{\BE}}\rho
    = \tensor[]{\begin{psmallmatrix}
        z\\0
    \end{psmallmatrix}}{^{\BE}}\rho
    = \tensor[]{z}{^{\BE}}
        (\tensor[]{\iota}{_{C}}\tensor[]{)}{^{\BE}}\rho
    = (\tensor[]{\iota}{_{A}}\tensor[]{)}{_{\BE}}
        \tensor[]{z}{^{\BE}}\delta,
\]
where the last equality follows from   \((\tensor[]{\iota}{_{A}},\tensor[]{\iota}{_{C}})
\) being a morphism 
of extensions.
As \((\tensor[]{\iota}{_{A}}\tensor[]{)}{_{\BE}}\) is monic since \(\tensor[]{\iota}{_{A}}\) is a section, this yields
\(\tensor[]{x}{_{\BE}}\alpha = \tensor[]{z}{^{\BE}}\delta\). 
Moreover, 
we 
conclude that \((x,z)\) is the unique morphism \(\alpha \to \delta\) satisfying
\(
\left(
\begin{psmallmatrix}
x\\ x'
\end{psmallmatrix},
\begin{psmallmatrix}
z\\ z'
\end{psmallmatrix}\right)
    = (\tensor[]{\iota}{_{A}},\tensor[]{\iota}{_{C}})
        (x,z),
\) 
so \((\tensor[]{\iota}{_{A}},\tensor[]{\iota}{_{C}})\) is a kernel of \((\tensor[]{\pi}{_{A'}},\tensor[]{\pi}{_{C'}})\). 
Similarly, one can 
verify that 
\((\tensor[]{\pi}{_{A'}},\tensor[]{\pi}{_{C'}})\) 
is a cokernel of 
\((\tensor[]{\iota}{_{A}},\tensor[]{\iota}{_{C}})\). 
\end{proof}

We are now ready to show that \(\CX_{\BE}\) is an exact structure on \(\BE\dExt{\CC}\).

\begin{prop}
\label{prop:ECC-is-a-category}
The pair \((\BE\dExt{\CC},\CX_{\BE})\) is an exact category.
\end{prop}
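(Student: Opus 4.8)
The plan is to verify that \(\CX_{\BE}\) satisfies the standard axioms defining an exact structure on the additive category \(\BE\dExt{\CC}\): identities are inflations and deflations; inflations and deflations are each closed under composition; and pushouts of inflations, respectively pullbacks of deflations, along arbitrary morphisms exist and remain inflations, respectively deflations. Two ingredients are already in place. First, \(\CX_{\BE}\) is closed under isomorphism, as noted after \eqref{eqn:conflation-in-exact-structure}. Second, every sequence in \(\CX_{\BE}\) is a kernel--cokernel pair: by the reduction preceding \cref{lem:every-pair-of-sections-is-an-inflation}, any such sequence is isomorphic to one of the canonical form \eqref{eqn:conflation-in-exact-structure-canonical-split}, which \cref{lem:every-pair-of-sections-is-an-inflation} shows to be a kernel--cokernel pair. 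Moreover, the self-dual description of \(\CX_{\BE}\) recorded before that lemma — a sequence lies in \(\CX_{\BE}\) exactly when \(b,d\) are retractions with \(a=\ker b\) and \(c=\ker d\) — means each axiom only needs to be checked together with its dual, the latter following by the symmetric argument.

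For the identity axiom, \(\iden{\delta}=(\iden{A},\iden{C})\) has components that are sections with cokernel the zero object, so \cref{lem:every-pair-of-sections-is-an-inflation} shows it is an inflation (and dually a deflation). For closure of inflations under composition, let \((a,c)\colon\delta\to\rho\) and \((a',c')\colon\rho\to\nu\) be inflations, so that \(a,a',c,c'\) are sections admitting cokernels. The composites \(a'a\) and \(c'c\) are again sections, and splitting the four underlying short exact sequences exhibits the relevant middle objects as iterated biproducts in which \(a'a\) and \(c'c\) become inclusions of a direct summand; in particular they admit cokernels. Hence \cref{lem:every-pair-of-sections-is-an-inflation} applies to \((a'a,c'c)\), and the dual argument yields closure of deflations under composition.

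The main work lies in the pushout axiom, which I expect to be the principal obstacle. Given an inflation \((a,c)\colon\delta\to\rho\) and an arbitrary morphism \((x,z)\colon\delta\to\alpha\) with \(\alpha\inn\BE(Z,X)\), I would first reduce via \eqref{eqn:conflation-in-exact-structure-canonical-split} to the case \(\rho\inn\BE(C\oplus C',A\oplus A')\) with \((a,c)=(\tensor[]{\iota}{_{A}},\tensor[]{\iota}{_{C}})\) and cokernel \((\tensor[]{\pi}{_{A'}},\tensor[]{\pi}{_{C'}})\colon\rho\to\eta\). Using biadditivity of \(\BE\), the extension \(\rho\) is recorded by its four components in \(\BE(C,A)\), \(\BE(C,A')\), \(\BE(C',A)\) and \(\BE(C',A')\); the condition that \((\tensor[]{\iota}{_{A}},\tensor[]{\iota}{_{C}})\) and \((\tensor[]{\pi}{_{A'}},\tensor[]{\pi}{_{C'}})\) are morphisms of extensions forces these to be \(\delta\), \(0\), some \(\tau\) and \(\eta\), respectively. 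I would then define the candidate pushout \(\sigma\inn\BE(Z\oplus C',X\oplus A')\) to be the extension with components \(\alpha\), \(0\), \(\tensor[]{x}{_{\BE}}\tau\) and \(\eta\), and verify directly, using \eqref{eqn:morphism-of-extensions-defining-property}, that \((\tensor[]{\iota}{_{X}},\tensor[]{\iota}{_{Z}})\colon\alpha\to\sigma\) together with the cobase-change morphism \(\rho\to\sigma\) form a commuting square. The universal property then reduces to the underlying category: the forgetful functor \(\BE\dExt{\CC}\to\CC\times\CC\) is faithful and additive and sends this square to the (componentwise) pushout of a split monomorphism in \(\CC\times\CC\), so the only remaining point is to confirm that the mediating morphism produced in \(\CC\times\CC\) respects the \(\BE\)-extension structure — a direct computation with \eqref{eqn:morphism-of-extensions-defining-property}. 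Since \((\tensor[]{\iota}{_{X}},\tensor[]{\iota}{_{Z}})\) is a pair of sections with cokernels, \cref{lem:every-pair-of-sections-is-an-inflation} shows the pushout is an inflation, and the dual argument handles pullbacks of deflations. The delicate part is identifying \(\sigma\) correctly and checking that the square is a pushout \emph{in} \(\BE\dExt{\CC}\), not merely in \(\CC\times\CC\).
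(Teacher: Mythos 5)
Your proposal is essentially correct, but note one omission before the comparison: the proposition asserts that \((\BE\dExt{\CC},\CX_{\BE})\) is an exact \emph{category}, which includes the claim that \(\BE\dExt{\CC}\) is additive. The paper's proof devotes its first paragraph to this (preadditivity from the biadditivity of \(\BE\), biproducts given by \(\delta\oplus\rho\) with inclusions \((\tensor[]{\iota}{_{A}},\tensor[]{\iota}{_{C}})\) and projections \((\tensor[]{\pi}{_{A}},\tensor[]{\pi}{_{C}})\)), whereas you take additivity as given. It is routine, but it is part of the statement and should be recorded.

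On the exact-structure axioms your route agrees with the paper's for identities and composition of inflations (both reduce to \cref{lem:every-pair-of-sections-is-an-inflation}), but diverges genuinely on the pushout axiom (E2). The paper argues structurally: given the canonical conflation \eqref{eqn:conflation-in-exact-structure-canonical-split} and an arbitrary \((u,w)\colon\delta\to\beta\), it forms the unique morphism \((e,f)\colon\delta\to\beta\oplus\rho\) with \(\tensor[]{\pi}{_{\beta}}(e,f)=(-u,-w)\) and \(\tensor[]{\pi}{_{\rho}}(e,f)=(\tensor[]{\iota}{_{A}},\tensor[]{\iota}{_{C}})\), observes that this is a pair of sections with explicit cokernels, hence by \cref{lem:every-pair-of-sections-is-an-inflation} it completes to a conflation \(\delta\to\beta\oplus\rho\to\gamma\), and identifies \(\gamma\) with the induced maps as the pushout --- the classical ``pushout as cokernel of a difference map into a biproduct'' device, which lets the lemma and the biproduct structure of \(\BE\dExt{\CC}\) do all the work. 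You instead construct the pushout extension \(\sigma\) explicitly from its four biadditivity components \((\alpha,0,\tensor[]{x}{_{\BE}}\tau,\eta)\) and verify the universal property by descending along the faithful forgetful functor to \(\CC\times\CC\). Your candidate \(\sigma\) is the correct one, the commutativity of your square is exactly \eqref{eqn:morphism-of-extensions-defining-property} combined with \(\tensor[]{x}{_{\BE}}\delta=\tensor[]{z}{^{\BE}}\alpha\), and the verification you defer does succeed: for a cocone \((g,h)\colon\rho\to\beta\), \((g',h')\colon\alpha\to\beta\), the mediating pair is \(\bigl(\begin{psmallmatrix} g' & g\tensor[]{\iota}{_{A'}} \end{psmallmatrix},\begin{psmallmatrix} h' & h\tensor[]{\iota}{_{C'}} \end{psmallmatrix}\bigr)\), and checking it is a morphism of extensions splits along \(\tensor[]{\iota}{_{Z}}\) and \(\tensor[]{\iota}{_{C'}}\) into the statement that \((g',h')\) is a morphism of extensions and the statement obtained from \((g,h)\) being one together with the cocone relation \(g\tensor[]{\iota}{_{A}}=g'x\). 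What the paper's route buys is economy: no component bookkeeping, and the pushout arrives as another instance of the already-proven lemma. What yours buys is an explicit formula for the pushout extension --- which in the \(\Hom\)-bifunctor case of \cref{example:arrow-category-as-category-of-extensions} recovers the expected matrix description --- at the price of a longer computation that your sketch leaves to the reader.
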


\begin{proof}
We first verify that \(\BE\dExt{\CC}\) is an additive category. 
Let \(\delta\inn\BE(C,A)\) and \(\rho\inn\BE(D,B)\) be extensions. The collection  
of morphisms of extensions \(\delta\to\rho\) in \(\BE\dExt{\CC}\) is a set, since \(\CC(A,B)\) and \(\CC(C,D)\) are both groups and hence sets.
The addition of morphisms \((a,c)\colon\delta\to\rho\) and \((a',c')\colon\delta\to\rho\) 
is defined by the pair \((a+a',c+c')\), which is a morphism \(\delta\to\rho\) of extensions as \(\BE\) is biadditive. This establishes that \(\BE\dExt{\CC}\) is preadditive. 
By the biadditivity of \(\BE\), we have a natural isomorphism
\(
\BE(C \oplus D, A \oplus B) \iso \BE(C,A) \oplus \BE(C,B) \oplus \BE(D,A) \oplus \BE(D,B).
\)
As in \cite[Def.~2.6]{HerschendLiuNakaoka-n-exangulated-categories-I-definitions-and-fundamental-properties}, we let \(\delta \oplus \rho \inn\BE(C \oplus D, A \oplus B)\) denote the element corresponding to \((\delta,0,0,\rho)\) via this isomorphism. 
It is straightforward to check that this gives a biproduct of extensions making \(\BE\dExt{\CC}\) an additive category; see, for instance, Liu--Tan \cite[Rem.~2]{Liu-Tan-Resolution-Dimension-Relative-to-Resolving-Subcategories-in-Extriangulated-Categories}. 
In particular, note that the biproduct inclusion and projection morphisms are of the form 
\(
\tensor[]{\iota}{_{\delta}} = (\tensor[]{\iota}{_{A}},\tensor[]{\iota}{_{C}})\colon \delta\to \delta\oplus \rho
\) 
and 
\(
\tensor[]{\pi}{_{\delta}} = (\tensor[]{\pi}{_{A}},\tensor[]{\pi}{_{C}})\colon  \delta\oplus \rho\to\delta
\).

Next we show that \(\CX_{\BE}\) is an exact structure on \(\BE\dExt{\CC}\). 
It follows from \cref{lem:every-pair-of-sections-is-an-inflation} that \(\CX_{\BE}\) consists of kernel-cokernel pairs, using that (co)kernels are unique up to isomorphism.
To check the axioms as in B\"{u}hler \cite[Def.\ 2.1]{Buhler-exact-categories} of an exact category, it suffices to consider sequences 
of the form \eqref{eqn:conflation-in-exact-structure-canonical-split} as \(\CX_{\BE}\) is closed under isomorphisms. 
The identity morphism of \(\delta\inn\BE(C,A)\) is \(\iden{\delta} = (\iden{A},\iden{C})\), which is a pair of sections admitting cokernels, so (E0) holds. 
The collection of morphisms of extensions that consist of pairs of sections that admit cokernels is closed under composition, so (E1) follows from \cref{lem:every-pair-of-sections-is-an-inflation}. 
We prove (E2) below, and note that axioms (E\(0^{\op}\)), (E\(1^{\op}\)) and (E\(2^{\op}\)) can be shown dually.

For (E2), 
suppose we have a conflation \eqref{eqn:conflation-in-exact-structure-canonical-split}. 
Let 
\((u,w)\colon \delta \to \beta\) 
be an arbitrary morphism 
where 
\(\beta\inn\BE(W,U)\). 
By the universal property of the product in \(\BE\dExt{\CC}\), 
there exists a unique morphism 
\((e,f)\colon \delta\to \beta\oplus \rho\) 
for which 
\(\tensor[]{\pi}{_{\beta}}(e,f) = (-u,-w)\) and 
\(\tensor[]{\pi}{_{\rho}}(e,f) = (\tensor[]{\iota}{_{A}},\tensor[]{\iota}{_{C}})\).
It follows that 
\((e,f) = \left(\begin{psmallmatrix}
-u \\ \iden{A} \\0
\end{psmallmatrix}, 
\begin{psmallmatrix}
-w \\ \iden{C} \\0
\end{psmallmatrix}\right)\) 
is a pair of sections with 
\(l \deff \cok e = \begin{psmallmatrix}
\iden{U} & u & 0 \\
0 & 0 & \iden{A'}
\end{psmallmatrix}\) 
and
\(
m \deff \cok f = \begin{psmallmatrix}
\iden{W} & w & 0\\
0 & 0 & \iden{C'}
\end{psmallmatrix}
\). 
By \cref{lem:every-pair-of-sections-is-an-inflation}, this implies that \((e,f)\) fits into a conflation  
\(
\begin{tikzcd}
\delta \arrow{r}{(e,f)}& \beta\oplus\rho \arrow{r}{(l,m)}& \gamma
\end{tikzcd}
\) 
with \(\gamma\inn\BE(W\oplus C', U\oplus A')\). 
It is straightforward to check that 
\(\gamma\) equipped with 
\((l,m)\tensor[]{\iota}{_{\beta}}\colon \beta\to\gamma\)
and 
\((l,m)\tensor[]{\iota}{_{\rho}}\colon \rho\to\gamma\)
is a pushout of \((\tensor[]{\iota}{_{A}},\tensor[]{\iota}{_{C}})\) along \((u,w)\). 
Lastly, we note that  
\(
(l,m)\tensor[]{\iota}{_{\beta}}
    = \left( 
    \begin{psmallmatrix}
    \iden{U} \\ 0
    \end{psmallmatrix},
    \begin{psmallmatrix}
    \iden{W} \\ 0
    \end{psmallmatrix}
    \right)
\) 
is a pair of sections admitting cokernels, 
and hence an inflation by \cref{lem:every-pair-of-sections-is-an-inflation}.
\end{proof}

In \cref{example:arrow-category-as-category-of-extensions} we consider the category of extensions given by the \(\Hom\)-bifunctor.

\begin{example}
\label{example:arrow-category-as-category-of-extensions}
Consider the biadditive functor 
\(
\BE(-,-) \deff \CC(-,-) \colon \CC^{\op}\times\CC\to\Ab
\). 
With this choice, the objects in \(\BE\dExt{\CC}\) coincide with morphisms in \(\CC\). 
For \(\delta \inn\BE(C,A)\) and \(\rho \inn\BE(D,B)\), 
a morphism 
\(\delta \to \rho\) in \(\BE\dExt{\CC}\) 
is given by a pair \((a,c)\) with \(a \inn\CC(A,B)\) and \(c \inn\CC(C,D)\) such that \(\tensor[]{a}{_{\BE}}\delta  = \tensor[]{c}{^{\BE}}\rho\), i.e.\ such that the square
\[
\begin{tikzcd}[column sep=1cm]
C \arrow{d}{c} \arrow{r}{\delta} & A \arrow{d}{a}\\
D \arrow{r}{\rho} & B
\end{tikzcd}
\]
commutes in \(\CC\). 
It follows that 
\(\BE\dExt{\CC}\) is the \emph{arrow category of \(\CC\)}. 
Furthermore, note that given \(\delta \inn\BE(C,A)\) and \(\rho \inn\BE(D,B)\), the biproduct
\(\delta\oplus \rho\) in the additive category \(\BE\dExt{\CC}\) is the morphism 
\(
\begin{psmallmatrix}
\delta  & 0 \\
0       & \rho
\end{psmallmatrix}
\colon C \oplus D \to A \oplus B
\).

A sequence \eqref{eqn:conflation-in-exact-structure-canonical-split} in \(\CX_{\BE}\) corresponds to a morphism \(
\begin{psmallmatrix}
\delta  & \alpha \\
0       & \eta
\end{psmallmatrix}
\colon C \oplus C' \to A \oplus A',
\) where 
\(\alpha\colon C'\to A\) can be taken to be arbitrary. 
Such a conflation is trivial if and only if we have \(\alpha=\delta\gamma-\beta\eta\) for some \(\gamma\colon C'\to C\) and some \(\beta\colon A'\to A\). 
For an example of a non-trivial conflation, one may hence take \(\CC=\Ab\), \(\alpha=\iden{\BZ}\) and \(\delta=\eta\) to be the endomorphism of \(\BZ\) given by \(d\mapsto 2d\).
\end{example}

\begin{rem}
\label{rem:lit-review-of-the-category-of-extensions}
The authors are grateful to Thomas Br{\"{u}}stle for pointing out that variants of the category \(\BE\dExt{\CC}\) have been 
studied before. 
Gabriel--Nazarova--Roiter--Sergeichuk--Vossieck \cite[Sec.\ 1]{GabrielNazarovaRoiterSergeichukVossieck-tame-and-wild-subspace-problems} considered a category of \(M\)-\emph{spaces} for a functor \(M\) from an \emph{aggregate} (that is, a skeletally small, \(\Hom\)-finite, Krull--Schmidt category) 
to a category of vector spaces. 
Gabriel--Roiter \cite[p.\ 88, Exam.\ 5]{GabrielRoiter-reps-of-finite-dimensional-algebras} looked more generally at a category defined by a bifunctor on a pair of aggregates, and this context was generalised further by Dr{\"{a}}xler--Reiten--Smal{\o}--Solberg \cite[p.\ 670]{DraxlerReitenSmaloSolberg-exact-categories-and-vector-space-categories}. 
These examples also have analogues, where one restricts the focus to extensions of the form \(\delta\inn\BE(C,A)\) with  \(A=C\); see, for example,  
Crawley-Boevey \cite{Crawley-Boevey-Matrix-problems-and-Drozds-theorem}, 
Tiefenbrunner \cite{Tiefenbrunner-phd-thesis}, 
Gei{\ss} \cite{geiss-Deformations-of-bimodule-problems} 
and 
Br{\"{u}}stle--Hille \cite{Brustle-Hille-Matrices-over-upper-triangular-bimodules}.
\end{rem}


\subsection{Functors between categories of extensions}
\label{subsection:3.2}

In this subsection we discuss how functors between categories of extensions relate to functors on the underlying categories. 
This culminates in \cref{prop:functor-on-extensions}, from which 
\cref{thm:characterisation-of-n-exangulated-functors} in the next subsection will follow.

\begin{setup}
\label{setup:3}
For the remainder of this subsection, let \(\SF\colon\CC\to\CC'\) be a functor between additive categories \(\CC\) and \(\CC'\). Suppose also that \(\BE\colon \CC^{\op}\times\CC\to\Ab\) and \(\BE'\colon (\CC')^{\op}\times\CC'\to\Ab\) are biadditive functors. 
\end{setup}

In Setup~\ref{setup:3} we do not assume the functor \(\SF\) to be additive. We explicitly impose this requirement whenever needed in the results that follow. Associated to \(\SF\) is the \emph{opposite functor} \(\SF^{\op} \colon \CC^{\op}\to (\CC')^{\op}\), and we usually abuse notation by writing \(\SF\) instead of \(\SF^{\op}\).

\begin{defn}
\label{def:respects-morphisms-over-SF}
We say that a functor \(\SE\colon\BE\dExt{\CC}\to\BE'\dExt{\CC'}\) \emph{respects morphisms over} \(\SF\) 
if, for every morphism \((a,c)\colon\delta\to\rho\) in \(\BE\dExt{\CC}\), the morphism \(\SE(a,c)\colon \SE(\delta)\to\SE(\rho)\) in \(\BE'\dExt{\CC'}\)
is given by the pair \((\SF a, \SF c)\).
\end{defn}

Building on \cref{example:arrow-category-as-category-of-extensions}, the following shows that a functor \(\SF\) between additive categories always induces a functor between categories of extensions that respects morphisms over \(\SF\).

\begin{example}
\label{example:arrow-category-as-category-of-extensions-part2}
Recall that if we put \(\BE(-,-) = \CC(-,-)\) and \(\BE'(-,-) = \CC'(-,-)\), 
then \(\BE\dExt{\CC}\) and \(\BE'\dExt{\CC'}\) coincide with the arrow categories of \(\CC\) and \(\CC'\), respectively; see \cref{example:arrow-category-as-category-of-extensions}. 
Since functors preserve commutative squares, any functor \(\SF\colon \CC \to \CC'\) induces a functor 
\(\SE\colon \BE\dExt{\CC} \to \BE'\dExt{\CC'}\). 
This functor is defined by 
\(\SE(\delta) = \SF\delta\) for \(\delta\inn \BE(C,A)\), and by \(\SE(a,c) = (\SF a,\SF c) \colon \SE(\delta) \to \SE(\rho)\) for each morphism \((a,c)\colon \delta \to \rho\) in \(\BE\dExt{\CC}\). Note that \(\SE\) respects morphisms over \(\SF\) by construction.

Note that \cref{lem:functor-respects-morphisms-plays-well-with-domains-codomains-functions} below holds trivially in the setup of this example. In particular, the equations in 
\cref{lem:functor-respects-morphisms-plays-well-with-domains-codomains-functions}(ii) say that \(\SF\) respects composition of morphisms. It is clear 
that if \(\SF\) is additive, then \(\SE\) is additive. The converse also holds, but involves a trick; see the proof of \cref{prop:F-additive-iff-E-additive-iff-E-exact}. 
In this case, we 
have \(\SE(\tensor[]{\delta}{_{1}} + \tensor[]{\delta}{_{2}}) = \SE(\tensor[]{\delta}{_{1}})+\SE(\tensor[]{\delta}{_{2}})\) for \(\tensor[]{\delta}{_{1}},\tensor[]{\delta}{_{2}} \inn\BE(C,A)\). 
These statements hold more generally; see \cref{prop:F-additive-iff-E-additive-iff-E-exact} and \cref{prop:respecting-extensions-preserves-additivity}.
\end{example}

\begin{rem}
\label{rem:subtle-remark}
Note that even though a functor \(\SE\colon\BE\dExt{\CC}\to\BE'\dExt{\CC'}\) that respects morphisms over \(\SF\) sends a pair \((a,c)\) of morphisms 
of
\(\CC\) to the pair \((\SF a,\SF c)\) of morphisms 
of 
\(\CC'\), 
this does \emph{not} mean that \(\SE\) is determined on all morphisms of \(\BE\dExt{\CC}\). 
For instance, in \cref{example:arrow-category-as-category-of-extensions-part2} one could also consider a functor \(\wt{\SE}\) that respects morphisms over \(\SF\), but is defined by \(\wt{\SE}(\delta)= -\SF \delta\) on objects. 
Despite \(\SE(a,c)\) and \(\wt{\SE}(a,c)\) both being equal to \((\SF a, \SF c)\) as pairs of morphisms 
of
\(\CC'\), we might have \(\SE(a,c) \neq \wt{\SE}(a,c)\)
as morphisms in \(\BE'\dExt{\CC'}\), since the domains 
or the codomains may not agree. 
Indeed, if \(\SF\delta\neq -\SF\delta\) in \(\BE'(\SF C,\SF A)\), then 
the morphisms 
\(\SE(\iden{\delta})
    = \iden{\SF \delta}
\) 
and 
\(\wt{\SE}(\iden{\delta}) 
    = \iden{-\SF \delta} 
\)
are not the same identity morphisms in \(\BE'\dExt{\CC'}\), even though they are both given by the pair  
\((\iden{\SF A},\iden{\SF C})\). 
Hence, the notion of respecting morphisms is not as strict as it may seem. In particular, a functor that is determined on all morphisms is automatically also determined on all objects, but this is not necessarily the case for functors respecting morphisms. 
\end{rem}

As the point made in \cref{rem:subtle-remark} is subtle, we now spell out explicitly what it means for 
two morphisms 
in 
the category \(\BE\dExt{\CC}\) to be equal. 

\begin{rem}
\label{rem:equality-of-morphisms-of-extensions}
Let \(\delta\inn\BE(C,A)\), \(\delta'\inn\BE(C',A')\), \(\rho\inn\BE(D,B)\) and \(\rho'\inn\BE(D',B')\) be extensions. Suppose one fixes morphisms \((a,c)\colon \delta\to \rho\) and \((a',c')\colon \delta'\to \rho'\) in the category \(\BE\dExt{\CC}\). These morphisms are equal in \(\BE\dExt{\CC}\) if and only if we have the equalities:
\begin{enumerate}[label=\textup{(\roman*)}]
    \item\label{item:equal-objects}
    \(A=A'\), \(B=B'\), \(C=C'\) and \(D=D'\) as objects in \(\CC\);
    
    \item\label{item:equal-domain-codomain}
    \(\delta=\delta'\) as elements in \(\BE(C,A)\) and \(\rho=\rho'\) as elements in \(\BE(D,B)\); and 
    
    \item\label{item:equal-morphisms}
    \(a=a'\) and \(c=c'\) as morphisms in \(\CC\).
\end{enumerate}
When 
checking that two morphisms \((a,c)\colon\delta\to\rho\) and \((a',c')\colon\delta'\to\rho'\) as above are equal, it is 
usually straightforward---but nonetheless essential---to verify requirements \ref{item:equal-objects} and \ref{item:equal-domain-codomain}. 
The verification of \ref{item:equal-morphisms} is 
typically less straightforward and often involves \cref{def:respects-morphisms-over-SF}.
\end{rem}

Our next lemma, which we use to prove \cref{prop:respecting-extensions-preserves-additivity,prop:functor-on-extensions}, describes natural compatibility properties for a functor that respects morphisms over \(\SF\).

\begin{lem}
\label{lem:functor-respects-morphisms-plays-well-with-domains-codomains-functions}
Let \(\SE\colon\BE\dExt{\CC}\to\BE'\dExt{\CC'}\) be a functor that respects morphisms over \(\SF\).
The following statements hold for \(\delta\inn\BE(C,A)\).
\begin{enumerate}[label=\textup{(\roman*)}]

    \item The extension \(\SE(\delta)\) lies in \(\BE'(\SF C,\SF A)\).

    \item If \(x\colon A\to X\) and \(z\colon Z\to C\) are morphisms in \(\CC\), then
        \begin{center}
        \(
        \SE(\tensor[]{x}{_{\BE}}\delta)
            =(\SF x\tensor[]{)}{_{\BE'}}\SE(\delta)
        \hspace{1cm}\text{and}\hspace{1cm}
        \SE(\tensor[]{z}{^{\BE}}\delta)
            =(\SF z\tensor[]{)}{^{\BE'}}\SE(\delta).
        \) 
        \end{center}
\end{enumerate}
\end{lem}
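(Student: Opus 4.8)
The plan is to unwind the definition of respecting morphisms over \(\SF\) (\cref{def:respects-morphisms-over-SF}) and apply it to carefully chosen morphisms of extensions whose existence is guaranteed by the Yoneda-type naturality of \(\tensor*[_{\BE}]{\delta}{}\) and \(\tensor*[^{\BE}]{\delta}{}\). For part (i), I would recall that \(\delta\inn\BE(C,A)\) means the object \(\delta\) of \(\BE\dExt{\CC}\) has ``source data'' \(A\) and \(C\). The identity morphism \(\iden{\delta}=(\iden{A},\iden{C})\) must be sent by \(\SE\) to \(\iden{\SE(\delta)}\); since \(\SE\) respects morphisms over \(\SF\), this image is the pair \((\SF\iden{A},\SF\iden{C})=(\iden{\SF A},\iden{\SF C})\). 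An identity morphism in \(\BE'\dExt{\CC'}\) of this form can only be the identity of an extension lying in \(\BE'(\SF C,\SF A)\), which gives the claim. (One uses here the explicit description of identities in \(\BE\dExt{\CC}\) from \cref{subsection:3.1} together with \cref{rem:equality-of-morphisms-of-extensions}\ref{item:equal-objects} to read off the correct source and target objects.)

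For part (ii), I would exhibit, for a morphism \(x\colon A\to X\) in \(\CC\), a genuine morphism of extensions in \(\BE\dExt{\CC}\) whose \(\SF\)-image encodes the desired equation, and similarly for \(z\colon Z\to C\). Concretely, the pair \((x,\iden{C})\) is a morphism of extensions \(\delta\to\tensor[]{x}{_{\BE}}\delta\): indeed \(\tensor[]{x}{_{\BE}}\delta=\BE(C,x)(\delta)\) by definition, and the defining equation \eqref{eqn:morphism-of-extensions-defining-property} reads \(\tensor[]{x}{_{\BE}}\delta=(\iden{C}\tensor[]{)}{^{\BE}}(\tensor[]{x}{_{\BE}}\delta)\), which holds trivially. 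Applying \(\SE\) and using that it respects morphisms over \(\SF\), the image \(\SE(x,\iden{C})=(\SF x,\SF\iden{C})=(\SF x,\iden{\SF C})\) is a morphism of extensions \(\SE(\delta)\to\SE(\tensor[]{x}{_{\BE}}\delta)\) in \(\BE'\dExt{\CC'}\). By part (i), \(\SE(\delta)\inn\BE'(\SF C,\SF A)\) and \(\SE(\tensor[]{x}{_{\BE}}\delta)\inn\BE'(\SF C,\SF X)\), so the defining equation \eqref{eqn:morphism-of-extensions-defining-property} for this morphism of \(\BE'\)-extensions forces
\[
(\SF x\tensor[]{)}{_{\BE'}}\SE(\delta)=(\iden{\SF C}\tensor[]{)}{^{\BE'}}\SE(\tensor[]{x}{_{\BE}}\delta)=\SE(\tensor[]{x}{_{\BE}}\delta),
\]
which is the first equation. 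The second follows symmetrically by instead considering the pair \((\iden{A},z)\colon \tensor[]{z}{^{\BE}}\delta\to\delta\), whose image \((\iden{\SF A},\SF z)\) yields \(\SE(\tensor[]{z}{^{\BE}}\delta)=(\SF z\tensor[]{)}{^{\BE'}}\SE(\delta)\) after applying \eqref{eqn:morphism-of-extensions-defining-property} in \(\CC'\).

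The only genuine care needed---and the step I expect to be the main obstacle---is the bookkeeping of sources and targets, precisely the subtlety flagged in \cref{rem:subtle-remark} and codified in \cref{rem:equality-of-morphisms-of-extensions}. Respecting morphisms over \(\SF\) pins down only the \emph{pair} of underlying \(\CC'\)-morphisms, not the domain and codomain extensions, so I must verify separately (via part (i)) that the extensions \(\SE(\delta)\), \(\SE(\tensor[]{x}{_{\BE}}\delta)\) and \(\SE(\tensor[]{z}{^{\BE}}\delta)\) live in the abelian groups I claim, before I am entitled to read off the equations in \(\BE'\) from \eqref{eqn:morphism-of-extensions-defining-property}. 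Once the source/target data is correctly identified, the equalities are immediate consequences of the defining property of morphisms of \(\BE'\)-extensions, so no substantial computation remains.
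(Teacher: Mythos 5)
Your proposal is correct and follows essentially the same route as the paper's proof: part (i) via applying \(\SE\) to \(\iden{\delta}=(\iden{A},\iden{C})\) and reading off the source/target of the resulting identity, and part (ii) via applying \(\SE\) to the canonical morphisms \((x,\iden{C})\colon\delta\to\tensor[]{x}{_{\BE}}\delta\) and \((\iden{A},z)\colon\tensor[]{z}{^{\BE}}\delta\to\delta\) and then invoking the defining equation \eqref{eqn:morphism-of-extensions-defining-property} in \(\BE'\dExt{\CC'}\). The only cosmetic difference is that the paper cites \cite[Rem.~2.4]{HerschendLiuNakaoka-n-exangulated-categories-I-definitions-and-fundamental-properties} for the morphism \((x,\iden{C})\) and treats the second identity of (ii) as dual, whereas you verify both directly.
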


\begin{proof}
(i)\;
As \(\SE\) is a functor, we have 
\(
\iden{\SE(\delta)}
    = \SE(\iden{A},\iden{C})
\). 
Since \(\SE\) respects morphisms over \(\SF\), 
this equals 
\(
(\SF \iden{A}, \SF \iden{C}) 
    = (\iden{\SF A}, \iden{\SF C})
\), 
so \(\SE(\delta)\inn\BE'(\SF C,\SF A)\). 

(ii)\;
We only demonstrate the first identity, as the second is dual. 
By \cite[Rem.~2.4]{HerschendLiuNakaoka-n-exangulated-categories-I-definitions-and-fundamental-properties}, the pair
\(
(x,\iden{C})\colon \delta\to \tensor[]{x}{_{\BE}}\delta
\)
is a morphism in \(\BE\dExt{\CC}\). 
Since \(\SE\) is a functor and respects morphisms over \(\SF\), this implies that \(\SE(x,\iden{C})=(\SF x,\iden{\SF C})\) is a morphism 
\(\SE(\delta) \to \SE(\tensor[]{x}{_{\BE}}\delta)\) in \(\BE'\dExt{\CC'}\). Consequently, we have \(
(\SF x\tensor[]{)}{_{\BE'}}\SE(\delta)
    = (\iden{\SF C}\tensor[]{)}{^{\BE'}}\SE(\tensor[]{x}{_{\BE}}\delta)
    = \SE(\tensor[]{x}{_{\BE}}\delta)
\).
\end{proof}

The following proposition establishes the connection between the additivity of a functor \(\SF\) and that of, or the exactness of, a functor which respects morphisms over \(\SF\).

\begin{prop}
\label{prop:F-additive-iff-E-additive-iff-E-exact}
Let \(\SE\colon\BE\dExt{\CC}\to\BE'\dExt{\CC'}\) be a functor that respects morphisms over \(\SF\). 
The following statements are equivalent.
\begin{enumerate}[\textup{(\roman*)}]
    \item\label{item:F-additive} The functor \(\SF\colon\CC\to\CC'\) is additive.
    \item\label{item:E-additive} The functor \(\SE\colon\BE\dExt{\CC}\to\BE'\dExt{\CC'}\) is additive.
    \item\label{item:E-exact} The functor \(\SE\colon (\BE\dExt{\CC},\CX_{\BE}) \to (\BE'\dExt{\CC'},\CX_{\BE'})\) is exact.
\end{enumerate}
\end{prop}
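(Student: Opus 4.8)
The plan is to prove the equivalences through the implications \ref{item:F-additive}$\Rightarrow$\ref{item:E-additive}$\Rightarrow$\ref{item:F-additive} and \ref{item:F-additive}$\Rightarrow$\ref{item:E-exact}$\Rightarrow$\ref{item:E-additive}. I would use throughout that addition of morphisms in both categories of extensions is componentwise (see \cref{prop:ECC-is-a-category}), together with the standard fact that a functor between additive categories is additive if and only if the induced maps on hom-groups are group homomorphisms. For \ref{item:F-additive}$\Rightarrow$\ref{item:E-additive}, I would take parallel morphisms $(a,c),(a',c')\colon\delta\to\rho$ and compute
\[
\SE\big((a,c)+(a',c')\big)=\SE(a+a',c+c')=(\SF(a+a'),\SF(c+c'))=(\SF a+\SF a',\SF c+\SF c')=\SE(a,c)+\SE(a',c'),
\]
where the middle equalities use that $\SE$ respects morphisms over $\SF$ and that $\SF$ is additive. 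Hence $\SE$ is additive on hom-groups, so it is additive.

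The hard part is the converse \ref{item:E-additive}$\Rightarrow$\ref{item:F-additive}, the trick alluded to in \cref{example:arrow-category-as-category-of-extensions-part2}. The obstacle is that, as stressed in \cref{rem:subtle-remark}, $\SE$ respecting morphisms over $\SF$ does \emph{not} determine $\SE$ on objects, so the additivity of $\SF$ cannot simply be read off from that of $\SE$. To isolate $\SF$, I would fix $a,a'\colon A\to B$ in $\CC$ and regard them as parallel morphisms $(a,\iden{0}),(a',\iden{0})\colon\tensor[_{A}]{0}{_{0}}\to\tensor[_{B}]{0}{_{0}}$ between zero extensions over the zero object (these are morphisms of extensions since both defining terms vanish). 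Applying the additive functor $\SE$ and using that it respects morphisms over $\SF$, the morphisms $\SE\big((a,\iden{0})+(a',\iden{0})\big)$ and $\SE(a,\iden{0})+\SE(a',\iden{0})$ are both morphisms $\SE(\tensor[_{A}]{0}{_{0}})\to\SE(\tensor[_{B}]{0}{_{0}})$ with first components $\SF(a+a')$ and $\SF a+\SF a'$, respectively. As these two morphisms coincide by additivity of $\SE$, comparing first components via \cref{rem:equality-of-morphisms-of-extensions} yields $\SF(a+a')=\SF a+\SF a'$, so $\SF$ is additive.

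Finally, for \ref{item:F-additive}$\Rightarrow$\ref{item:E-exact} I would note that $\SE$ is additive by the first implication and then check that it preserves conflations. Since $\CX_{\BE}$ and $\CX_{\BE'}$ are closed under isomorphism and functors preserve isomorphisms, it suffices to treat a conflation in the canonical form \eqref{eqn:conflation-in-exact-structure-canonical-split}. Because $\SE$ respects morphisms over $\SF$, its image is the sequence $\SE\delta\to\SE\rho\to\SE\eta$ whose two morphisms are $(\SF\tensor[]{\iota}{_{A}},\SF\tensor[]{\iota}{_{C}})$ and $(\SF\tensor[]{\pi}{_{A'}},\SF\tensor[]{\pi}{_{C'}})$; as $\SF$ is additive it preserves biproducts, so the two underlying sequences of this image are split exact in $\CC'$. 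By the characterisation of $\CX_{\BE'}$ in terms of underlying split exact sequences (see the discussion preceding \cref{lem:every-pair-of-sections-is-an-inflation}), the image is a conflation, and thus $\SE$ is exact. The remaining implication \ref{item:E-exact}$\Rightarrow$\ref{item:E-additive} holds because an exact functor between exact categories is additive by definition, which closes the cycle.
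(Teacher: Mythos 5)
Your proof is correct and takes essentially the same approach as the paper: the key direction (ii)$\Rightarrow$(i) uses exactly the paper's trick of comparing parallel morphisms between the trivial extensions over the zero object, and your (i)$\Rightarrow$(iii) argument via the underlying split exact sequences is the paper's argument verbatim. The only differences are organisational---the paper proves the cycle (i)$\Rightarrow$(iii)$\Rightarrow$(ii)$\Rightarrow$(i), deducing additivity from exactness via preservation of biproducts rather than treating it as part of the definition of an exact functor, while you insert a direct (and easy) proof of (i)$\Rightarrow$(ii)---and both conventions yield a complete equivalence.
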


\begin{proof}
\(\ref{item:F-additive}\Rightarrow\ref{item:E-exact}\)\;
For a conflation \eqref{eqn:conflation-in-exact-structure} in \(\CX_{\BE}\), 
the underlying sequences 
\(
\begin{tikzcd}[column sep=0.5cm]
A \arrow{r}& B\arrow{r} & A'
\end{tikzcd}
\) 
and 
\(
\begin{tikzcd}[column sep=0.5cm]
C \arrow{r}& D\arrow{r} & C'
\end{tikzcd}
\) are split exact in \(\CC\). 
As \(\SF\) is additive, their images 
\(
\begin{tikzcd}[column sep=0.5cm]
\SF A \arrow{r}& \SF B\arrow{r} & \SF A'
\end{tikzcd}
\) 
and 
\(
\begin{tikzcd}[column sep=0.5cm]
\SF C \arrow{r}& \SF D\arrow{r} & \SF C'
\end{tikzcd}
\) 
under \(\SF\) are split exact in \(\CC'\). 
Since \(\SE\) respects morphisms over \(\SF\), the image
\(
\begin{tikzcd}[column sep=0.4cm]
\SE(\delta) \arrow{r} & \SE(\rho') \arrow{r} & \SE(\eta)
\end{tikzcd}
\)
of \eqref{eqn:conflation-in-exact-structure} under \(\SE\) is a sequence in \(\CX_{\BE'}\).

\(\ref{item:E-exact}\Rightarrow\ref{item:E-additive}\)\;
An exact functor 
preserves finite biproducts and is hence additive.

\(\ref{item:E-additive}\Rightarrow\ref{item:F-additive}\)\;
Recall that for any morphism \(a\colon A\to B\) in \(\CC\) and for the unique element \(\delta\inn\BE(0,A)\), there is a morphism \((a,\iden{0})\colon \delta\to \tensor[]{a}{_{\BE}}\delta\) in \(\BE\dExt{\CC}\). 
Let \(a,b\colon A\to B\) be morphisms in \(\CC\). Note that the morphisms \((a,\iden{0}) \colon \delta \to a_{\BE}\delta\) and \((b,\iden{0}) \colon \delta \to b_{\BE}\delta\) in \(\BE\dExt{\CC}\) have the same codomain, as \(a_{\BE}\delta = b_{\BE}\delta\) in the trivial abelian group \(\BE(0,B)\). 
Consequently, we can add 
\((a,\iden{0})\) and \((b,\iden{0})\)
as morphisms \(\delta\to a_{\BE}\delta\), and so the sum 
\(
\SE(a,\iden{0}) + \SE(b,\iden{0})
\)
also makes sense. 
We then have
\begin{align*}
(\SF(a+b),\SF \iden{0})
    &= \SE(a+b, \iden{0}) && \text{as }\SE\text{ respects morphisms over }\SF\\
    &= \SE(a+b, \iden{0} + \iden{0}) && \text{as }\CC(0,0)=\set{\iden{0}}\text{ is trivial}\\
    &= \SE(a,\iden{0})+\SE(b,\iden{0}) && \text{as }\SE\text{ is additive by assumption}\\
    &= (\SF a, \SF\iden{0}) + (\SF b, \SF \iden{0})&& \text{as }\SE\text{ respects morphisms over }\SF\\
    &= (\SF a+\SF b,\SF \iden{0} + \SF \iden{0}).
\end{align*}
The computation 
yields \(\SF(a+b) = \SF a + \SF b\), and so \(\SF\) is additive.
\end{proof}

In the proof of \cref{prop:functor-on-extensions}, we use that functors which respect morphisms over additive functors preserve the additivity of extensions. This is shown in \cref{prop:respecting-extensions-preserves-additivity} below, for which we first recall some notation.

\begin{notn}\label{not:delta,nabla} (See \cite[Def.~2.6]{HerschendLiuNakaoka-n-exangulated-categories-I-definitions-and-fundamental-properties}.)
Given an object \(X\) in an additive category, we use the notation \(\tensor[]{\iota}{_{X,1}}\) and \(\tensor[]{\iota}{_{X,2}}\) for the canonical inclusion morphisms \(X\to X\oplus X\) into the first and second summand of the biproduct, respectively. Similarly, we write \(\tensor[]{\pi}{_{X,1}}\) and \(\tensor[]{\pi}{_{X,2}}\) for the canonical projection morphisms \(X\oplus X\to X\). 
Let \(\tensor[]{\Delta}{_{X}}\colon X\to X\oplus X\) (resp.\ \(\tensor[]{\nabla}{_{X}}\colon X\oplus X\to X\)) denote the diagonal (resp.\ codiagonal) morphism of \(X\), i.e.\ the unique morphism such that \(\tensor[]{\pi}{_{X,i}}\circ\tensor[]{\Delta}{_{X}}=\iden{X}\) (resp.\ \(\tensor[]{\nabla}{_{X}}\circ\tensor[]{\iota}{_{X,i}}=\iden{X}\)) for 
\(i=1,2\). Note that for any \(A,C\inn\CC\) and any extensions \(\tensor[]{\delta}{_{1}},\tensor[]{\delta}{_{2}}\inn\BE(C,A)\), the addition in the abelian group \(\BE(C,A)\) relates to the
biproduct in \(\BE\dExt{\CC}\) via the equation
\begin{equation}
\label{eqn:internal-sum-of-BE-and-the-Baer-sum}
\tensor[]{\delta}{_{1}}+\tensor[]{\delta}{_{2}}
    =\BE(\tensor[]{\Delta}{_{C}},\tensor[]{\nabla}{_{A}})(\tensor[]{\delta}{_{1}}\oplus\tensor[]{\delta}{_{2}}).
\end{equation}
\end{notn}

\begin{prop}
\label{prop:respecting-extensions-preserves-additivity} 
Suppose that \(\SF\) is additive and let  
\(\SE\colon\BE\dExt{\CC}\to\BE'\dExt{\CC'}\) be a functor that respects morphisms over \(\SF\). 
For all \(A,C\inn\CC\) and for all extensions \(\tensor[]{\delta}{_{1}},\tensor[]{\delta}{_{2}}\inn\BE(C,A)\), 
we have that 
\(\SE(\tensor[]{\delta}{_{1}} + \tensor[]{\delta}{_{2}})=\SE(\tensor[]{\delta}{_{1}})+\SE(\tensor[]{\delta}{_{2}})\).
\end{prop}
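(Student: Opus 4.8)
The plan is to reduce the claim to \eqref{eqn:internal-sum-of-BE-and-the-Baer-sum}, which re-expresses the group operation of \(\BE(C,A)\) in terms of the biproduct of \(\BE\dExt{\CC}\) together with the diagonal and codiagonal morphisms, and then to transport everything through \(\SE\) using \cref{lem:functor-respects-morphisms-plays-well-with-domains-codomains-functions}(ii). First I would rewrite \eqref{eqn:internal-sum-of-BE-and-the-Baer-sum} in the form \(\tensor[]{\delta}{_{1}}+\tensor[]{\delta}{_{2}} = (\tensor[]{\nabla}{_{A}})_{\BE}(\tensor[]{\Delta}{_{C}})^{\BE}(\tensor[]{\delta}{_{1}}\oplus\tensor[]{\delta}{_{2}})\), using that \(\BE(\tensor[]{\Delta}{_{C}},\tensor[]{\nabla}{_{A}}) = (\tensor[]{\nabla}{_{A}})_{\BE}(\tensor[]{\Delta}{_{C}})^{\BE}\).

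Applying \(\SE\) and invoking \cref{lem:functor-respects-morphisms-plays-well-with-domains-codomains-functions}(ii) twice, once for the covariant action \((\tensor[]{\nabla}{_{A}})_{\BE}\) and once for the contravariant action \((\tensor[]{\Delta}{_{C}})^{\BE}\), I would obtain
\[
\SE(\tensor[]{\delta}{_{1}}+\tensor[]{\delta}{_{2}}) = (\SF\tensor[]{\nabla}{_{A}})_{\BE'}(\SF\tensor[]{\Delta}{_{C}})^{\BE'}\,\SE(\tensor[]{\delta}{_{1}}\oplus\tensor[]{\delta}{_{2}}).
\]

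Next I would bring in additivity. Since \(\SF\) is additive it preserves biproducts, so there are canonical isomorphisms \(\tensor[]{\phi}{_{C}}\colon\SF(C\oplus C)\to\SF C\oplus\SF C\) and \(\tensor[]{\phi}{_{A}}\colon\SF(A\oplus A)\to\SF A\oplus\SF A\); a short check against the defining relations \(\tensor[]{\pi}{_{X,i}}\tensor[]{\Delta}{_{X}}=\iden{X}\) and \(\tensor[]{\nabla}{_{X}}\tensor[]{\iota}{_{X,i}}=\iden{X}\) then gives \(\SF\tensor[]{\Delta}{_{C}}=\tensor[]{\phi}{_{C}^{-1}}\tensor[]{\Delta}{_{\SF C}}\) and \(\SF\tensor[]{\nabla}{_{A}}=\tensor[]{\nabla}{_{\SF A}}\tensor[]{\phi}{_{A}}\). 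Moreover, by \cref{prop:F-additive-iff-E-additive-iff-E-exact} the functor \(\SE\) is additive and hence preserves biproducts, which (since \(\SE\) respects morphisms over \(\SF\), sending the biproduct structure morphisms of \(\tensor[]{\delta}{_{1}}\oplus\tensor[]{\delta}{_{2}}\) to those for \(\SF\)) lets me identify \(\SE(\tensor[]{\delta}{_{1}}\oplus\tensor[]{\delta}{_{2}})\) transported along \(\tensor[]{\phi}{_{C}}\) and \(\tensor[]{\phi}{_{A}}\) with the biproduct \(\SE(\tensor[]{\delta}{_{1}})\oplus\SE(\tensor[]{\delta}{_{2}})\); that is, \(\BE'(\tensor[]{\phi}{_{C}^{-1}},\tensor[]{\phi}{_{A}})\big(\SE(\tensor[]{\delta}{_{1}}\oplus\tensor[]{\delta}{_{2}})\big)=\SE(\tensor[]{\delta}{_{1}})\oplus\SE(\tensor[]{\delta}{_{2}})\).

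Substituting these relations into the displayed equation and commuting the isomorphisms \(\tensor[]{\phi}{_{C}},\tensor[]{\phi}{_{A}}\) through the contravariant and covariant actions collapses the right-hand side to \((\tensor[]{\nabla}{_{\SF A}})_{\BE'}(\tensor[]{\Delta}{_{\SF C}})^{\BE'}\big(\SE(\tensor[]{\delta}{_{1}})\oplus\SE(\tensor[]{\delta}{_{2}})\big)\), which by \eqref{eqn:internal-sum-of-BE-and-the-Baer-sum} applied in \(\BE'\) is exactly \(\SE(\tensor[]{\delta}{_{1}})+\SE(\tensor[]{\delta}{_{2}})\). The hard part will be the bookkeeping in this last identification: as \cref{rem:subtle-remark} warns, \(\SE\) is not determined on objects, so one cannot write \(\SE(\tensor[]{\delta}{_{1}}\oplus\tensor[]{\delta}{_{2}})=\SE(\tensor[]{\delta}{_{1}})\oplus\SE(\tensor[]{\delta}{_{2}})\) on the nose—the two even have different domains \(\SF(C\oplus C)\) and \(\SF C\oplus\SF C\)—and the argument must be routed carefully through the canonical biproduct isomorphisms \(\tensor[]{\phi}{_{C}},\tensor[]{\phi}{_{A}}\) together with the additivity of \(\SE\).
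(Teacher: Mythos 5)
Your proposal is correct, and its skeleton coincides with the paper's proof: both reduce the claim to \eqref{eqn:internal-sum-of-BE-and-the-Baer-sum}, push the diagonal and codiagonal actions through \(\SE\) via \cref{lem:functor-respects-morphisms-plays-well-with-domains-codomains-functions}(ii), and mediate between \(\SF(X\oplus X)\) and \(\SF X\oplus\SF X\) by the canonical isomorphisms coming from additivity of \(\SF\) (your \(\phi_X\) is the paper's \(f_X\), and your compatibility relations for \(\Delta\) and \(\nabla\) are exactly \eqref{eqn:Delta-nabla-compatibility}). Where you genuinely diverge is in how you establish the central identity \(\BE'(\phi_C^{-1},\phi_A)(\SE(\tensor[]{\delta}{_{1}}\oplus\tensor[]{\delta}{_{2}}))=\SE(\tensor[]{\delta}{_{1}})\oplus\SE(\tensor[]{\delta}{_{2}})\), which is equivalent to the paper's \eqref{eqn:description-SE-of-rho-oplus-rho-prime}. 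The paper proves it by hand: it checks that the transported element satisfies the defining equations \eqref{eqn:equations-for-E-rho-oplus-E-rho-prime} of the biproduct extension and concludes by uniqueness, which in turn requires the auxiliary computation \(\SE(\tensor[_{A}]{0}{_{C}})=\tensor[_{\SF A}]{0}{_{\SF C}}\). You instead invoke \cref{prop:F-additive-iff-E-additive-iff-E-exact} to get that \(\SE\) is additive, hence sends the biproduct diagram of \(\tensor[]{\delta}{_{1}}\oplus\tensor[]{\delta}{_{2}}\) (whose structure morphisms are the pairs of \(\iota\)'s and \(\pi\)'s recorded in the proof of \cref{prop:ECC-is-a-category}) to a biproduct diagram on \(\SE(\tensor[]{\delta}{_{1}}\oplus\tensor[]{\delta}{_{2}})\); the canonical comparison isomorphism to \(\SE(\tensor[]{\delta}{_{1}})\oplus\SE(\tensor[]{\delta}{_{2}})\) then has underlying pair \((\phi_A,\phi_C)\)---because composition and addition in \(\BE'\dExt{\CC'}\) are componentwise and \(\SE\) respects morphisms over \(\SF\)---and the statement that this pair is a morphism of extensions unravels precisely to your transport formula. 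This is legitimate and non-circular, since \cref{prop:F-additive-iff-E-additive-iff-E-exact} precedes the present proposition in the paper and its proof is independent of it. What each approach buys: yours is shorter, avoiding both the uniqueness verification and the zero-extension computation, at the cost of leaning on \cref{prop:F-additive-iff-E-additive-iff-E-exact} and of one step you should still write out in full (the componentwise identification of the comparison isomorphism as \((\phi_A,\phi_C)\)); the paper's is longer but self-contained, working directly from the defining equations of \(\oplus\) on extensions. Your closing caution, prompted by \cref{rem:subtle-remark}, that \(\SE(\tensor[]{\delta}{_{1}}\oplus\tensor[]{\delta}{_{2}})\) and \(\SE(\tensor[]{\delta}{_{1}})\oplus\SE(\tensor[]{\delta}{_{2}})\) have different ambient groups and so cannot be equated on the nose, is exactly the right one.
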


\begin{proof} 
Since \(\SF\) is additive, there exists an isomorphism 
\(\tensor[]{f}{_{X}}\colon \SF(X\oplus X) \to \SF X\oplus \SF X\) for each \(X\inn\CC\), such that for each \(i=1,2\) the diagram
\begin{equation}
\label{eqn:morphism-f-X-for-biproduct}
\begin{tikzcd}[column sep=2.5cm, row sep=0.3cm]
        &\SF(X \oplus X)
            \arrow{dd}{\iso}[swap]{\tensor[]{f}{_{X}}}   & \\
\SF X  
    \arrow{ur}{\SF\tensor[]{\iota}{_{X,i}}}
    \arrow{dr}[swap]{\tensor[]{\iota}{_{\SF X,i}}}
        &&      \SF X
                \arrow
                    [leftarrow]{ul}[swap]{\SF \tensor[]{\pi}{_{X,i}}} 
                \arrow
                    [leftarrow]{dl}{\tensor[]{\pi}{_{\SF X,i}}} \\
        &\SF X\oplus \SF X&
\end{tikzcd}
\end{equation}
in \(\CC'\) commutes. 
For \(1\leq i,j \leq 2\), it follows that 
\begin{equation}
\label{eqn:isomorphism-of-biproducts-and-inclusion-projection-maps}
\BE'(\tensor[]{\iota}{_{\SF C,i}}, \tensor[]{\pi}{_{\SF A,j}}) 
= \BE'(\SF\tensor[]{\iota}{_{C,i}}, \SF\tensor[]{\pi}{_{A,j}})\circ \BE'(\tensor[]{f}{_{C}}, \tensor*[]{f}{_{A}^{-1}}).
\end{equation}

Recall from \cite[Def.~2.6]{HerschendLiuNakaoka-n-exangulated-categories-I-definitions-and-fundamental-properties} 
that \(\tensor[]{\delta}{_{1}}\oplus\tensor[]{\delta}{_{2}}\) is the unique element in \(\BE(C\oplus C,A\oplus A)\) satisfying 
\begin{equation}
\label{eqn:equations-for-rho-oplus-rho-prime}
\BE(\tensor[]{\iota}{_{C,i}},\tensor[]{\pi}{_{A,j}})(\tensor[]{\delta}{_{1}}\oplus\tensor[]{\delta}{_{2}}) =
\begin{cases}
\delta_{i} & \text{if } i=j\\
\tensor[_{A}]{0}{_{C}} & \text{if } i\neq j,
\end{cases}
\end{equation} 
and that \(\SE(\tensor[]{\delta}{_{1}})\oplus\SE(\tensor[]{\delta}{_{2}})\) is the unique element in \(\BE'(\SF C\oplus \SF C,\SF A\oplus \SF A)\) satisfying 
\begin{equation}\label{eqn:equations-for-E-rho-oplus-E-rho-prime}
\BE'(\tensor[]{\iota}{_{\SF C,i}},\tensor[]{\pi}{_{\SF A,j}})(\SE(\tensor[]{\delta}{_{1}})\oplus\SE(\tensor[]{\delta}{_{2}})) =
\begin{cases}
\SE(\delta_{i}) & \text{if } i=j\\
\tensor[_{\SF A}]{0}{_{\SF C}} & \text{if } i\neq j.
\end{cases}
\end{equation}

Consider the element 
\(
\eta \deff \BE'(\tensor[]{f}{_{C}},\tensor*[]{f}{_{A}^{-1}}\tensor[]{)}{^{-1}}(\SE(\tensor[]{\delta}{_{1}}\oplus\tensor[]{\delta}{_{2}}))
\) 
in \(\BE'(\SF C\oplus \SF C,\SF A\oplus \SF A)\). 
We claim that \(\eta\) and \(\SE(\tensor[]{\delta}{_{1}})\oplus\SE(\tensor[]{\delta}{_{2}})\) are equal. Note that it suffices to show that \(\eta\) satisfies the defining equations \eqref{eqn:equations-for-E-rho-oplus-E-rho-prime} of \(\SE(\tensor[]{\delta}{_{1}})\oplus\SE(\tensor[]{\delta}{_{2}})\). 
In order to verify this, let us first consider the zero morphism \(\tensor[]{0}{_{A}}\colon A\to A\). 
The induced homomorphism 
\(
(\tensor[]{0}{_{A}}\tensor[]{)}{_{\BE}}
    = \BE(C,\tensor[]{0}{_{A}})
    \colon \BE(C,A)\to \BE(C,A)
\) is 
trivial, and hence 
\(
(\tensor[]{0}{_{A}}\tensor[]{)}{_{\BE}}(\tensor[_{A}]{0}{_{C}})
    = \tensor[_{A}]{0}{_{C}}
\). 
Similarly, the homomorphism \((\tensor[]{0}{_{\SF A}}\tensor[]{)}{_{\BE'}}\) is the zero map, so 
\(
(\tensor[]{0}{_{\SF A}}\tensor[]{)}{_{\BE'}}(\SE(\tensor[_{A}]{0}{_{C}}))
    =\tensor[_{\SF A}]{0}{_{\SF C}}
\).
Using these observations, we have that 
\begin{align*}
\SE(\tensor[_{A}]{0}{_{C}})
    &= \SE((\tensor[]{0}{_{A}}\tensor[]{)}{_{\BE}}(\tensor[_{A}]{0}{_{C}}))\\
    &= (\tensor[]{\SF 0}{_{A}}\tensor[]{)}{_{\BE'}}(\SE(\tensor[_{A}]{0}{_{C}}))
        &&\text{by \cref{lem:functor-respects-morphisms-plays-well-with-domains-codomains-functions}(ii)}\\
    &= (\tensor[]{0}{_{\SF A}}\tensor[]{)}{_{\BE'}}(\SE(\tensor[_{A}]{0}{_{C}}))
        &&\text{since }\SF\text{ is additive}\\
    &= \tensor[_{\SF A}]{0}{_{\SF C}}.
\end{align*}
This gives 
\begin{equation}
\label{eqn:equations-for-rho-oplus-rho-prime-after-applying-E}
\BE'(\SF\tensor[]{\iota}{_{C,i}},\SF\tensor[]{\pi}{_{A,j}})(\SE(\tensor[]{\delta}{_{1}}\oplus\tensor[]{\delta}{_{2}})) 
= \SE (\BE(\tensor[]{\iota}{_{C,i}},\tensor[]{\pi}{_{A,j}})(\tensor[]{\delta}{_{1}}
\oplus\tensor[]{\delta}{_{2}})) =
\begin{cases}
\SE(\delta_{i}) & \text{if } i=j\\
\tensor[_{\SF A}]{0}{_{\SF C}} & \text{if } i\neq j,
\end{cases}
\end{equation} 
where the first equality is by \cref{lem:functor-respects-morphisms-plays-well-with-domains-codomains-functions}(ii) and the second holds by \eqref{eqn:equations-for-rho-oplus-rho-prime} and the observations above. We next see that 
\begin{align*}
\BE'(\tensor[]{\iota}{_{\SF C,i}},\tensor[]{\pi}{_{\SF A,j}})(\eta) 
    &= \BE'(\SF\tensor[]{\iota}{_{C,i}}, \SF\tensor[]{\pi}{_{A,j}})\circ \BE'(\tensor[]{f}{_{C}}, \tensor*[]{f}{_{A}^{-1}}) (\eta) 
        && \text{using } \eqref{eqn:isomorphism-of-biproducts-and-inclusion-projection-maps}\\
    &= \BE'(\SF\tensor[]{\iota}{_{C,i}}, \SF\tensor[]{\pi}{_{A,j}})(\SE(\tensor[]{\delta}{_{1}}\oplus\tensor[]{\delta}{_{2}})) 
        && \text{using the definition of }\eta\\
    &= \begin{cases}
            \SE(\delta_{i}) & \text{if } i=j\\
            \tensor[_{\SF A}]{0}{_{\SF C}} & \text{if } i\neq j
        \end{cases}
        && \text{by }\eqref{eqn:equations-for-rho-oplus-rho-prime-after-applying-E}.
\end{align*}
Uniqueness hence yields
\(\BE'(\tensor[]{f}{_{C}},\tensor*[]{f}{_{A}^{-1}}\tensor[]{)}{^{-1}}(\SE(\tensor[]{\delta}{_{1}}\oplus\tensor[]{\delta}{_{2}}))
    = \eta 
    = \SE(\tensor[]{\delta}{_{1}})\oplus\SE(\tensor[]{\delta}{_{2}})\), which implies that
\begin{equation}
\label{eqn:description-SE-of-rho-oplus-rho-prime}
\SE(\tensor[]{\delta}{_{1}}\oplus\tensor[]{\delta}{_{2}}) = \BE'(\tensor[]{f}{_{C}},\tensor*[]{f}{_{A}^{-1}})(\SE(\tensor[]{\delta}{_{1}})\oplus\SE(\tensor[]{\delta}{_{2}})).
\end{equation}
The commutativity of \eqref{eqn:morphism-f-X-for-biproduct} combined with uniqueness statements from the universal properties of the diagonal \(\tensor{\Delta}{_{\SF C}}\) and the codiagonal \(\tensor[]{\nabla}{_{\SF A}}\) gives 
\begin{equation}
\label{eqn:Delta-nabla-compatibility}
\tensor{\Delta}{_{\SF C}} = \tensor[]{f}{_{C}}\circ \SF \tensor[]{\Delta}{_{C}} \hspace{1cm}\text{and}\hspace{1cm}
\tensor[]{\nabla}{_{\SF A}} = (\SF\tensor[]{\nabla}{_{A}})\circ \tensor*[]{f}{_{A}^{-1}}.
\end{equation}
Altogether, we conclude that 
\begin{align*}
\SE(\BE(\tensor[]{\Delta}{_{C}},\tensor[]{\nabla}{_{A}})(\tensor[]{\delta}{_{1}}\oplus\tensor[]{\delta}{_{2}})) 
    &= \BE'(\SF\tensor[]{\Delta}{_{C}},\SF\tensor[]{\nabla}{_{A}})(\SE(\tensor[]{\delta}{_{1}}\oplus\tensor[]{\delta}{_{2}})) 
        && \text{by \cref{lem:functor-respects-morphisms-plays-well-with-domains-codomains-functions}(ii)}\\
    &= \BE'(\SF\tensor[]{\Delta}{_{C}},\SF\tensor[]{\nabla}{_{A}})\BE'(\tensor[]{f}{_{C}},\tensor*[]{f}{_{A}^{-1}})(\SE(\tensor[]{\delta}{_{1}})\oplus\SE(\tensor[]{\delta}{_{2}}))
        && \text{by \eqref{eqn:description-SE-of-rho-oplus-rho-prime}}\\
    &= \BE'(\tensor{\Delta}{_{\SF C}},\tensor[]{\nabla}{_{\SF A}})(\SE(\tensor[]{\delta}{_{1}})\oplus\SE(\tensor[]{\delta}{_{2}}))
        && \text{by \eqref{eqn:Delta-nabla-compatibility}.}
\end{align*}
Thus, using the description in \eqref{eqn:internal-sum-of-BE-and-the-Baer-sum}, we see that 
\(\SE(\tensor[]{\delta}{_{1}} +\tensor[]{\delta}{_{2}})=\SE(\tensor[]{\delta}{_{1}}) + \SE(\tensor[]{\delta}{_{2}})\), as required.
\end{proof}

We are now ready to prove the main result of this subsection, 
characterising the existence of an additive functor between categories of extensions that respects morphisms. This is the key ingredient in the proof of 
\cref{thm:characterisation-of-n-exangulated-functors}. 
We remark that, by \cref{prop:F-additive-iff-E-additive-iff-E-exact}, the functors \(\SE\) appearing in the right-hand side of the statement of \cref{prop:functor-on-extensions} are exact functors of the form 
\((\BE\dExt{\CC},\CX_{\BE})\to(\BE'\dExt{\CC'},\CX_{\BE'})\). 
Moreover, we note that the assignment from left to right in \cref{prop:functor-on-extensions} has been proven independently by B{\o{}}rve--Trygsland; 
see \cite[Lem.\ 4.2]{Borve-Trygsland-factorization-extensions}. 
We include the argument for completeness.

\begin{prop}
\label{prop:functor-on-extensions}
For an additive functor \(\SF\colon\CC\to\CC'\), there is a one-to-one correspondence
\[
\begin{adjustbox}{scale=1,center}
$\displaystyle
\begin{aligned}[t]
    \Set{
        \begin{array}{c}
            \textit{natural transformations}\\
            \Gamma\colon\BE(-,-) \Rightarrow\BE'(\SF-,\SF-)
        \end{array}
    }
& \longleftrightarrow 
    \Set{
        \begin{array}{c}
            \textit{additive functors } \SE\colon\BE\dExt{\CC}\to\BE'\dExt{\CC'}\\
            \textit{that respect morphisms over } \SF
        \end{array}
    }\\[5pt]
    \Gamma 
    \hspace{6pt}  
    &
    \xmapsto{\hspace{15.5pt}}\hspace{6pt}
    \tensor[]{\SE}{_{(\SF,\Gamma)}}\\
\tensor[]{\Gamma}{_{(\SF,\SE)}}
\hspace{5pt}
    &\xmapsfrom{\hspace{15.5pt}}\hspace{6pt}
    \SE,
\end{aligned}$
\end{adjustbox}
\]
where 
\(\tensor[]{\SE}{_{(\SF,\Gamma)}}(\delta)\deff\tensor[]{\Gamma}{_{(C,A)}}(\delta)\) 
and 
\((\tensor[]{\Gamma}{_{(\SF,\SE)}}\tensor[]{)}{_{(C,A)}}(\delta)\deff\SE(\delta)\) 
for each \(\delta\inn\BE(C,A)\). 
\end{prop}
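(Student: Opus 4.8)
The plan is to define both assignments explicitly, confirm that each lands in the stated collection, and then verify that the two round-trips are identities. I would begin with the forward direction $\Gamma\mapsto\tensor[]{\SE}{_{(\SF,\Gamma)}}$. Given a natural transformation $\Gamma$, set $\tensor[]{\SE}{_{(\SF,\Gamma)}}(\delta)\deff\tensor[]{\Gamma}{_{(C,A)}}(\delta)$ on objects $\delta\inn\BE(C,A)$ and $\tensor[]{\SE}{_{(\SF,\Gamma)}}(a,c)\deff(\SF a,\SF c)$ on a morphism $(a,c)\colon\delta\to\rho$ with $\rho\inn\BE(D,B)$. The crux of this direction is checking that $(\SF a,\SF c)$ really is a morphism $\tensor[]{\Gamma}{_{(C,A)}}(\delta)\to\tensor[]{\Gamma}{_{(D,B)}}(\rho)$ in $\BE'\dExt{\CC'}$, i.e.\ that $(\SF a)_{\BE'}\tensor[]{\Gamma}{_{(C,A)}}(\delta)=(\SF c)^{\BE'}\tensor[]{\Gamma}{_{(D,B)}}(\rho)$. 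I would obtain this from the two naturality conditions of $\Gamma$: naturality in the covariant variable at $a\colon A\to B$ yields $(\SF a)_{\BE'}\tensor[]{\Gamma}{_{(C,A)}}(\delta)=\tensor[]{\Gamma}{_{(C,B)}}(\tensor[]{a}{_{\BE}}\delta)$, while naturality in the contravariant variable at $c\colon C\to D$ yields $(\SF c)^{\BE'}\tensor[]{\Gamma}{_{(D,B)}}(\rho)=\tensor[]{\Gamma}{_{(C,B)}}(\tensor[]{c}{^{\BE}}\rho)$; since $(a,c)$ is a morphism of extensions we have $\tensor[]{a}{_{\BE}}\delta=\tensor[]{c}{^{\BE}}\rho$ by \eqref{eqn:morphism-of-extensions-defining-property}, and applying $\tensor[]{\Gamma}{_{(C,B)}}$ equates the two sides.

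Once well-definedness on morphisms is established, functoriality of $\tensor[]{\SE}{_{(\SF,\Gamma)}}$ is immediate from functoriality of $\SF$, as identities and composites are computed componentwise. By construction $\tensor[]{\SE}{_{(\SF,\Gamma)}}$ respects morphisms over $\SF$, and since $\SF$ is additive, \cref{prop:F-additive-iff-E-additive-iff-E-exact} then forces $\tensor[]{\SE}{_{(\SF,\Gamma)}}$ to be additive; hence the forward assignment lands in the right-hand collection. For the backward direction $\SE\mapsto\tensor[]{\Gamma}{_{(\SF,\SE)}}$, I would set $(\tensor[]{\Gamma}{_{(\SF,\SE)}})_{(C,A)}(\delta)\deff\SE(\delta)$. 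By \cref{lem:functor-respects-morphisms-plays-well-with-domains-codomains-functions}(i) this lands in $\BE'(\SF C,\SF A)$, so each component is a map $\BE(C,A)\to\BE'(\SF C,\SF A)$, and it is a group homomorphism by \cref{prop:respecting-extensions-preserves-additivity}. Naturality of $\tensor[]{\Gamma}{_{(\SF,\SE)}}$ in both variables is exactly the content of \cref{lem:functor-respects-morphisms-plays-well-with-domains-codomains-functions}(ii), so $\tensor[]{\Gamma}{_{(\SF,\SE)}}$ is a natural transformation $\BE(-,-)\Rightarrow\BE'(\SF-,\SF-)$.

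Finally, I would verify the two round-trips. Starting from $\Gamma$, unwinding the definitions gives $(\tensor[]{\Gamma}{_{(\SF,\tensor[]{\SE}{_{(\SF,\Gamma)}})}})_{(C,A)}(\delta)=\tensor[]{\SE}{_{(\SF,\Gamma)}}(\delta)=\tensor[]{\Gamma}{_{(C,A)}}(\delta)$, recovering $\Gamma$. Starting from $\SE$, the composite $\tensor[]{\SE}{_{(\SF,\tensor[]{\Gamma}{_{(\SF,\SE)}})}}$ agrees with $\SE$ on objects by the same unwinding, and sends every morphism $(a,c)$ to $(\SF a,\SF c)$, which equals $\SE(a,c)$ because $\SE$ respects morphisms over $\SF$. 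Here I would explicitly invoke \cref{rem:equality-of-morphisms-of-extensions}: as the two functors agree on objects (hence on domains and codomains) and on the underlying pairs of morphisms, their images coincide as morphisms of $\BE'\dExt{\CC'}$. This confirms the assignments are mutually inverse. I expect the main obstacle to be precisely the well-definedness computation in the first paragraph, where the interplay of the two naturality squares with the morphism-of-extensions relation must be tracked carefully; the remaining verifications are bookkeeping supported by \cref{lem:functor-respects-morphisms-plays-well-with-domains-codomains-functions}, \cref{prop:respecting-extensions-preserves-additivity} and \cref{prop:F-additive-iff-E-additive-iff-E-exact}.
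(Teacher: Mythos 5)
Your proposal is correct and follows essentially the same route as the paper: the forward direction hinges on the same naturality-of-\(\Gamma\) computation combined with \eqref{eqn:morphism-of-extensions-defining-property}, and the backward direction invokes \cref{lem:functor-respects-morphisms-plays-well-with-domains-codomains-functions} and \cref{prop:respecting-extensions-preserves-additivity} exactly as the paper does. The only difference is that you spell out the mutual-inverse check (correctly, via \cref{rem:equality-of-morphisms-of-extensions}) where the paper leaves it as straightforward.
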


\begin{proof}
Suppose that 
\(\Gamma\colon\BE(-,-) \Rightarrow\BE'(\SF-,\SF-)\) is a natural transformation. We define \(\SE{_{(\SF,\Gamma)}}\) by setting \(\tensor[]{\SE}{_{(\SF,\Gamma)}}(\delta)\deff\tensor[]{\Gamma}{_{(C,A)}}(\delta)\) for objects \(\delta\inn\BE(C,A)\) and \(\tensor[]{\SE}{_{(\SF,\Gamma)}}(a,c)\coloneqq(\SF a,\SF c)\) for morphisms \((a,c)\colon\delta\to\rho\) in \(\BE\dExt{\CC}\). We need to show that this gives an additive functor \(\BE\dExt{\CC}\to\BE'\dExt{\CC'}\) which respects morphisms over \(\SF\).

Let us first check that \(\tensor[]{\SE}{_{(\SF,\Gamma)}}(a,c)=(\SF a,\SF c)\) is a morphism from \(\tensor[]{\SE}{_{(\SF,\Gamma)}}(\delta) = \tensor[]{\Gamma}{_{(C,A)}}(\delta)\) to \(\tensor[]{\SE}{_{(\SF,\Gamma)}}(\rho) = \tensor[]{\Gamma}{_{(D,B)}}(\rho)\) in \(\BE'\dExt{\CC'}\) whenever \((a,c)\) is a morphism from \(\delta \inn \BE(C,A)\) to \(\rho \inn \BE(D,B)\) in \(\BE\dExt{\CC}\). 
Note that by the naturality of \(\Gamma\), for any pair of morphisms \(a\colon A\to B\) and \(c\colon C\to D\) 
of 
\(\CC\), the 
diagram 
\[
\begin{tikzcd}[column sep=2.2cm]
\BE(C,A) \arrow{r}{\BE(C,a)} \arrow{d}[swap]{\tensor[]{\Gamma}{_{(C,A)}}} & 
\BE(C,B)\arrow{d}{\tensor[]{\Gamma}{_{(C,B)}}} & 
\BE(D,B)\arrow{d}{\tensor[]{\Gamma}{_{(D,B)}}}\arrow{l}[swap]{\BE(c,B)}\\
\BE'(\SF C,\SF A ) \arrow{r}{\BE'(\SF C,\SF a)}&
\BE'(\SF C,\SF B)&
\BE'(\SF D,\SF B) \arrow{l}[swap]{\BE'(\SF c,\SF B)}
\end{tikzcd}
\]
commutes.
Suppose furthermore that the pair \((a,c)\) defines a morphism \(\delta\to\rho\) in \(\BE\dExt{\CC}\), where \(\delta\inn\BE(C,A)\) and \(\rho\inn\BE(D,B)\). 
Assuming additionally \((a,c) \colon \delta \to \rho\) is a morphism in \(\BE\dExt{\CC}\), we have
\(\BE(C,a)(\delta)=\BE(c,B)(\rho)\). 
Applying \(\tensor[]{\Gamma}{_{(C,B)}}\) to this equality, the commutativity 
above gives
\[
\BE'(\SF C,\SF a)(\tensor[]{\Gamma}{_{(C,A)}}(\delta))=\BE'(\SF c,\SF B)(\tensor[]{\Gamma}{_{(D,B)}}(\rho)),
\]
so \ \((\SF a, \SF c)\colon\tensor[]{\Gamma}{_{(C,A)}}(\delta)\to\tensor[]{\Gamma}{_{(D,B)}}(\rho)\) is a morphism in \(\BE'\dExt{\CC'}\). 
It is clear that the assignment \(\SE{_{(\SF,\Gamma)}}\)
respects identity morphisms and composition, and thus defines a functor 
\(\BE\dExt{\CC}\to\BE'\dExt{\CC'}\).  
Notice that \(\tensor[]{\SE}{_{(\SF,\Gamma)}}\) respects morphisms over \(\SF\) by construction. 
The additivity of \(\tensor[]{\SE}{_{(\SF,\Gamma)}}\) follows from the additivity of \(\SF\) by  \cref{prop:F-additive-iff-E-additive-iff-E-exact}.

Conversely,
suppose we are given an additive functor 
\(
\SE\colon\BE\dExt{\CC}\to\BE'\dExt{\CC'}
\) that respects morphisms over \(\SF\). 
Note that 
by \cref{lem:functor-respects-morphisms-plays-well-with-domains-codomains-functions}(i), 
we have
\(\SE(\delta)\inn\BE'(\SF C,\SF A)\) whenever \(\delta\inn\BE(C,A)\). 
For each pair \(A,C\inn\CC\), we can hence write \((\tensor[]{\Gamma}{_{(\SF,\SE)}}\tensor[]{)}{_{(C,A)}}(\delta)\deff\SE(\delta)\) to define a function \((\tensor[]{\Gamma}{_{(\SF,\SE)}}\tensor[]{)}{_{(C,A)}}\colon \BE(C,A)\to\BE'(\SF C,\SF A)\). 
It follows from \cref{prop:respecting-extensions-preserves-additivity} that the functions \((\tensor[]{\Gamma}{_{(\SF,\SE)}}\tensor[]{)}{_{(C,A)}}\) 
are group homomorphisms. 
The diagram
\[
\begin{tikzcd}[column sep=2.2cm, row sep=1.1cm]
 \BE(Z,A)\arrow{d}[swap]{(\tensor[]{\Gamma}{_{(\SF,\SE)}}\tensor[]{)}{_{(Z,A)}}} & \BE(C,A)\arrow{l}[swap]{\BE(z,A)} \arrow{r}{\BE(C,x)} \arrow{d}{(\tensor[]{\Gamma}{_{(\SF,\SE)}}\tensor[]{)}{_{(C,A)}}} & 
\BE(C,X)\arrow{d}{(\tensor[]{\Gamma}{_{(\SF,\SE)}}\tensor[]{)}{_{(C,X)}}}
\\
\BE'(\SF Z,\SF A ) &
\BE'(\SF C,\SF A)\arrow{r}{\BE'(\SF C,\SF x)}\arrow{l}[swap]{\BE'(\SF z,\SF A)}&
\BE'(\SF C,\SF X)
\end{tikzcd}
\]
commutes for any pair of morphisms \(x\colon A\to X\) and \(z\colon Z\to C\) 
of  
\(\CC\) 
by \cref{lem:functor-respects-morphisms-plays-well-with-domains-codomains-functions}(ii). 
Commutativity of diagrams of the above form imply the naturality of the transformation 
\(
\tensor[]{\Gamma}{_{(\SF,\SE)}}
    = \{ 
        (\tensor[]{\Gamma}{_{(\SF,\SE)}}
      \tensor[]{)}{_{(C,A)}}\tensor[]{\}}{_{(C,A)\inn\CC^{\scalebox{0.7}{\op}}\times\CC}}
        \colon 
    \BE(-,-) \Longrightarrow \BE'(\SF-,\SF-).
\)

By the arguments above, we see that the assignments \(\Gamma \mapsto \tensor[]{\SE}{_{(\SF,\Gamma)}}\) and \(\SE \mapsto \tensor[]{\Gamma}{_{(\SF,\SE)}}\) from the statement of the proposition are well-defined. 
It is straightforward to check that they are mutually inverse, and hence define a one-to-one correspondence.
\end{proof}


\subsection{A characterisation of \texorpdfstring{\(n\)}{n}-exangulated functors}
\label{subsection:3.3}

In this subsection we first recall the definition of an \(n\)-exangulated functor as introduced in \cite{Bennett-TennenhausShah-transport-of-structure-in-higher-homological-algebra}. This notion captures what it means for a functor between \(n\)-exangulated categories to be structure-preserving. 
We then prove the main result of \cref{section:3}, namely \cref{thm:characterisation-of-n-exangulated-functors}, which gives a characterisation of \(n\)-exangulated functors in terms of functors on the associated categories of extensions. We conclude with a lemma pertaining to the composition of these kinds of functors, which will allow us to take a 2-categorical perspective on \(n\)-exangulated categories in \cref{section:4}.

Recall that given an additive functor \(\SF\colon\CC\to\CC'\) between additive categories, there is an induced functor  \(\SF_{\com}\colon\com_{\CC}\to\com_{\CC'}\) between the corresponding categories of complexes. 
For 
\(X^{\combul}\inn\com_{\CC}\), the object \(\SF_{\com}X^{\combul}\inn\com_{\CC'}\) has \((\SF_{\com}X^{\combul})^{i}=\SF (X^{i})\) in degree \(i\inn\BZ\). The differential of \(\SF_{\com}X^{\combul}\) is given by \(d_{\SF_{\com} X }^{i}=\SF (d_{X}^{i})\), where \(d_X\) denotes the differential of \(X^{\combul}\). 
For a morphism \(f^{\combul}\) in \(\tensor[]{\com}{_{\CC}}\), 
we have 
\(\SF_{\com}f^{\combul} = (\ldots,\SF (f^{i-1}),\SF (f^{i}),\SF (f^{i+1}),\ldots)\). 
For the remainder of this section, let \((\CC,\BE,\fs)\) and \((\CC',\BE',\fs')\) 
be \(n\)-exangulated categories.

\begin{defn}
\label{def:n-exangulated-functor}(See \cite[Def.~2.32]{Bennett-TennenhausShah-transport-of-structure-in-higher-homological-algebra}.) 
Let \(\SF\colon \CC\to\CC'\) be an additive functor and let 
\[
\Gamma=\{\tensor[]{\Gamma}{_{(C,A)}}\tensor[]{\}}{_{(C,A)\inn\CC^{\scalebox{0.7}{\op}}\times\CC}}\colon \BE(-,-) \Longrightarrow \BE'(\SF^{\op}-,\SF-)
\]
be a natural transformation. We call the pair \((\SF,\Gamma) \colon (\CC,\BE,\fs) \to (\CC',\BE',\fs')\) an \emph{\(n\)-exangulated functor} if,
for all \(X^{0},X^{n+1}\inn\CC\) and each \(\delta\inn\BE(X^{n+1},X^{0})\), we have that 
\(\fs(\delta)=[X^{\combul}]\) implies 
\(\fs'(\tensor[]{\Gamma}{_{(X^{n+1},X^{0})}}(\delta))=[\SF_{\com} X^{\combul}]\). 
\end{defn}

A similar structure-preservation condition exists for functors on categories of extensions.

\begin{defn}
\label{def:SE-respects-distinguished-n-exangle-over-SF}
Suppose \(\SF\colon \CC \to \CC'\) is an additive functor. We say that a functor 
\(\SE\colon\BE\dExt{\CC}\to\BE'\dExt{\CC'}\) \emph{respects distinguished \(n\)-exangles over \(\SF\)} if 
\(\fs(\delta)=[X^{\combul}]\) implies \(\fs'(\SE(\delta))=[\SF_{\com} X^{\combul}]\).
\end{defn}

We are now ready to prove \cref{thmx:characterisation-of-n-exangulated-functors} from \cref{sec:introduction}. 
Again, by \cref{prop:F-additive-iff-E-additive-iff-E-exact}, each \(\SE\) in the statement below is an exact functor 
\((\BE\dExt{\CC},\CX_{\BE})\to(\BE'\dExt{\CC'},\CX_{\BE'})\).

\begin{thm}
\label{thm:characterisation-of-n-exangulated-functors} 
There is a one-to-one correspondence
\[
\begin{adjustbox}{
scale=0.93,
center}
$\displaystyle
\begin{aligned}[t]
    \Set{
        \begin{array}{c}
            n\textit{-exangulated functors}\\
            (\SF,\Gamma)\colon (\CC,\BE,\fs) \to (\CC',\BE',\fs')
        \end{array}
    }
& \longleftrightarrow 
    \Set{
        \begin{array}{c}
            \textit{pairs } (\SF,\SE) \textit{ of additive functors }\SF \colon \CC \to \CC'\\\textit{and } \SE\colon\BE\dExt{\CC}\to\BE'\dExt{\CC'}\textit{, where } \SE \textit{ respects}\\\textit{ morphisms and distinguished }
            n\textit{-exangles over } \SF
        \end{array}
    }\\[5pt]
(\SF,\Gamma) 
\hspace{6pt}
    &
    \xmapsto{\hspace{15.5pt}}\hspace{6pt}
    (\SF,\tensor[]{\SE}{_{(\SF,\Gamma)}})\\
(\SF,\tensor[]{\Gamma}{_{(\SF,\SE)}})
\hspace{6pt}
    &
    \xmapsfrom{\hspace{15.5pt}}\hspace{6pt}
    (\SF,\SE),
\end{aligned}$
\end{adjustbox}
\]
where 
\(\tensor[]{\SE}{_{(\SF,\Gamma)}}(\delta)\deff\tensor[]{\Gamma}{_{(C,A)}}(\delta)\) 
and 
\((\tensor[]{\Gamma}{_{(\SF,\SE)}}\tensor[]{)}{_{(C,A)}}(\delta)\deff\SE(\delta)\) 
for \(\delta\inn\BE(C,A)\). 
\end{thm}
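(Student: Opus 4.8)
The plan is to reduce the statement to \cref{prop:functor-on-extensions}, which already supplies the underlying bijection once the additive functor \(\SF\) is fixed. Concretely, I would first fix an additive functor \(\SF \colon \CC \to \CC'\) and recall that \cref{prop:functor-on-extensions} furnishes mutually inverse bijections \(\Gamma \mapsto \tensor[]{\SE}{_{(\SF,\Gamma)}}\) and \(\SE \mapsto \tensor[]{\Gamma}{_{(\SF,\SE)}}\) between natural transformations \(\Gamma \colon \BE(-,-) \Rightarrow \BE'(\SF-,\SF-)\) and additive functors \(\SE \colon \BE\dExt{\CC} \to \BE'\dExt{\CC'}\) that respect morphisms over \(\SF\), governed by the relation \(\tensor[]{\SE}{_{(\SF,\Gamma)}}(\delta) = \tensor[]{\Gamma}{_{(C,A)}}(\delta)\) for \(\delta \inn \BE(C,A)\). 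Since both sides of the claimed correspondence carry the same additive functor \(\SF\) as their first component, it suffices to prove, for each fixed \(\SF\), that this bijection restricts to one between those \(\Gamma\) for which \((\SF,\Gamma)\) is \(n\)-exangulated and those \(\SE\) which respect distinguished \(n\)-exangles over \(\SF\).

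The crux is then a direct comparison of the two structure-preservation conditions. For \(\delta \inn \BE(C,A)\) with \(\fs(\delta) = [X^{\combul}]\), one necessarily has \(X^{0} = A\) and \(X^{n+1} = C\), so the component of \(\Gamma\) appearing in \cref{def:n-exangulated-functor} is exactly \(\tensor[]{\Gamma}{_{(C,A)}}\), and by the defining relation this equals \(\tensor[]{\SE}{_{(\SF,\Gamma)}}(\delta)\). Hence the requirement \(\fs'(\tensor[]{\Gamma}{_{(C,A)}}(\delta)) = [\SF_{\com} X^{\combul}]\) of \cref{def:n-exangulated-functor} is word-for-word the requirement \(\fs'(\SE(\delta)) = [\SF_{\com} X^{\combul}]\) of \cref{def:SE-respects-distinguished-n-exangle-over-SF}, with \(\SE = \tensor[]{\SE}{_{(\SF,\Gamma)}}\). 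As every \(\delta \inn \BE(C,A)\) has a well-defined class \(\fs(\delta)\), the ranges of \(\delta\) in the two definitions coincide, so the two conditions hold or fail simultaneously. Running the same identification backwards through \(\SE \mapsto \tensor[]{\Gamma}{_{(\SF,\SE)}}\) shows the restricted correspondence is well-defined in both directions and inherits its mutual inverseness from \cref{prop:functor-on-extensions}. Taking the union over all additive \(\SF \colon \CC \to \CC'\) then yields the claimed bijection.

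I expect no genuine obstacle here: the heavy lifting, namely that an \(\SE\) respecting morphisms over \(\SF\) forces the additivity of \(\SF\) and \(\SE\) to match (\cref{prop:F-additive-iff-E-additive-iff-E-exact}) and that such functors preserve the additive structure of extensions (\cref{prop:respecting-extensions-preserves-additivity}), has already been absorbed into \cref{prop:functor-on-extensions}. The only point demanding care is the bookkeeping of domains and codomains: one must confirm that the subscript \((C,A)\) on \(\Gamma\) is pinned down by the endpoints of any complex representing \(\fs(\delta)\), so that the natural-transformation side and the functor side are being evaluated on the same extension. This is precisely the object-level matching flagged in \cref{rem:equality-of-morphisms-of-extensions}, and once it is made explicit the equivalence of the two conditions is immediate.
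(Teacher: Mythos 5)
Your proposal is correct and follows essentially the same route as the paper: both reduce the statement to \cref{prop:functor-on-extensions} for the underlying bijection, and then observe that under the identification \(\tensor[]{\SE}{_{(\SF,\Gamma)}}(\delta) = \tensor[]{\Gamma}{_{(C,A)}}(\delta)\) the condition of \cref{def:n-exangulated-functor} and that of \cref{def:SE-respects-distinguished-n-exangle-over-SF} are literally the same requirement, so the bijection restricts as claimed. Your extra care about the endpoints \(X^{0}=A\), \(X^{n+1}=C\) pinning down the component \(\tensor[]{\Gamma}{_{(C,A)}}\) is a point the paper passes over silently, but it is the same argument.
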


\begin{proof}
Suppose first that 
\((\SF,\Gamma)\colon (\CC,\BE,\fs) \to (\CC',\BE',\fs')\) is an \(n\)-exangulated functor. The functor \(\SF \colon \CC \to \CC'\) is hence additive, and \cref{prop:functor-on-extensions} yields that \(\tensor[]{\SE}{_{(\SF,\Gamma)}}\) as defined above is an additive functor \(\BE\dExt{\CC}\to\BE'\dExt{\CC'}\) that respects morphisms over \(\SF\). As \((\SF,\Gamma)\) is \(n\)-exangulated, we have that \(\fs(\delta)=[X^{\combul}]\) implies \(\fs'(\tensor[]{\SE}{_{(\SF,\Gamma)}}(\delta))=\fs'(\tensor[]{\Gamma}{_{(X^{n+1},X^0)}}(\delta))=[\SF_{\com} X^{\combul}]\), so \(\tensor[]{\SE}{_{(\SF,\Gamma)}}\) respects distinguished \(n\)-exangles over \(\SF\).

On the other hand, consider a pair \((\SF,\SE)\) 
from 
the right-hand side of the claimed correspondence. 
\cref{prop:functor-on-extensions} yields that \(\tensor[]{\Gamma}{_{(\SF,\SE)}}\) as defined above is a natural transformation \(\BE(-,-) \Rightarrow\BE'(\SF-,\SF-)\). 
If  
\(\fs(\delta)=[X^{\combul}]\), 
then we have \mbox{\(\fs'((\tensor[]{\Gamma}{_{(\SF,\SE)}}\tensor[]{)}{_{(X^{n+1},X^0)}}(\delta))=[\SF_{\com} X^{\combul}]\)} by assumption, so \((\SF,\tensor[]{\Gamma}{_{(\SF,\SE)}}) \colon (\CC,\BE,\fs) \to (\CC',\BE',\fs')\) is an \(n\)-exangulated functor.

Consequently, the assignments \((\SF,\Gamma) \mapsto (\SF,\tensor[]{\SE}{_{(\SF,\Gamma)}})\) and \((\SF,\SE) \mapsto (\SF,\tensor[]{\Gamma}{_{(\SF,\SE)}})\) from the statement of theorem are well-defined. It is straightforward to check that these two assignments are mutually inverse, and hence define a one-to-one correspondence.
\end{proof}

Recall that \cref{corx:criterion-for-n-exangulated-functor} of \cref{sec:introduction} interprets, in terms of functors between categories of extensions, what it means for an additive functor between \(n\)-exangulated categories to be structure-preserving. 
This corollary is an immediate consequence of \cref{thm:characterisation-of-n-exangulated-functors}.

For the remainder of this section, suppose also that \((\CC'',\BE'',\fs'')\) is an \(n\)-exangulated category and that \((\SF,\Gamma) \colon (\CC,\BE,\fs) \to (\CC',\BE',\fs')\) and 
\((\SL,\Phi) \colon (\CC',\BE',\fs') \to (\CC'',\BE'',\fs'')\) are \mbox{\(n\)-exangulated} functors. 
There is then a natural transformation
\[
\tensor[]{\Phi}{_{\SF\times\SF}}
    = 
    \{\Phi_{(\SF C,\SF A)}\tensor[]{\}}{_{(C,A)\inn\CC^{\scalebox{0.7}{\op}}\times\CC}}
        \colon 
    \BE'(\SF-,\SF-) 
        \Longrightarrow
    \BE''(\SL\SF-,\SL\SF-).
\] 
This is known as the \emph{whiskering of \(\SF\times\SF\) and \(\Phi\)}. 
We also use whiskerings in \cref{section:4}; see \Cref{notn:whiskers-and-identity-nat-trans} and onward.

Whiskerings enable us to define the composition of \(n\)-exangulated functors. This is a higher analogue of the composition of extriangulated functors from \cite[Def.~2.11(2)]{NakaokaOgawaSakai-localization-of-extriangulated-categories}.

\begin{defn}
\label{def:composing-n-exangulated-functors}
\begin{enumerate}[label=\textup{(\roman*)}]

    \item 
    The \emph{identity \(n\)-exangulated functor} of \((\CC,\BE,\fs)\) is the pair \((\idfunc{\CC},\iden{\BE})\).
    
    \item The \emph{composite} of \((\SF,\Gamma)\) and \((\SL,\Phi)\) is \((\SL,\Phi) \circ (\SF,\Gamma)\deff(\SL \circ \SF, \tensor[]{\Phi}{_{\SF\times\SF}} \circ\Gamma)\). 
    \end{enumerate}
\end{defn}

We conclude the section by justifying our terminology in \cref{def:composing-n-exangulated-functors} and showing 
that the left-to-right assignment in \cref{thm:characterisation-of-n-exangulated-functors} is compatible with identity and composition of \(n\)-exangulated functors.
\begin{lem}
\label{lem:identity-composition-n-exangulated-functors-and-extension-functors-respect-compositions}
The following statements hold. 
\begin{enumerate}[label=\textup{(\roman*)}]
    \item\label{item:identity-is-n-exangulated}
        The pair \((\idfunc{\CC},\iden{\BE})\) is an \(n\)-exangulated functor \((\CC,\BE,\fs) \to (\CC,\BE,\fs)\).
    \item\label{item:1-cell-composite-is-n-exangulated} 
        The composite 
\((\SL,\Phi) \circ (\SF,\Gamma)\colon (\CC,\BE,\fs) \to (\CC'',\BE'',\fs'')\)
is \(n\)-exangulated. This composition 
is associative and unital with respect to identity \(n\)-exangulated functors. 

    \item\label{item:induced-scriptE-functors-respect-identity-and-composition} 
        There are equalities 
        \(\tensor[]{\SE}{_{(\iden{\CC},\idfunc{\BE})}}=\iden{\BE\dExt{\CC}}\) 
        and 
        \(\tensor[]{\SE}{_{(\SL,\Phi)\circ(\SF,\Gamma)}}
            = \tensor[]{\SE}{_{(\SL,\Phi)}}\circ \tensor[]{\SE}{_{(\SF,\Gamma)}}\).
\end{enumerate}
\end{lem}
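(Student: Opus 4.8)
The plan is to verify each part directly from the relevant definitions, the only genuine care being that equalities of morphisms in the categories of extensions must be checked in the strict sense of \cref{rem:equality-of-morphisms-of-extensions}.

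For \ref{item:identity-is-n-exangulated}, I would note that \(\idfunc{\CC}\) is trivially additive and that \(\iden{\BE}\) is the identity natural transformation \(\BE(-,-)\Rightarrow\BE(-,-)\). The structure-preservation condition of \cref{def:n-exangulated-functor} then reduces to the tautology that \(\fs(\delta)=[X^{\combul}]\) implies \(\fs(\delta)=[X^{\combul}]\), since \((\iden{\BE}\tensor[]{)}{_{(X^{n+1},X^{0})}}(\delta)=\delta\) and \((\idfunc{\CC}\tensor[]{)}{_{\com}}X^{\combul}=X^{\combul}\).

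For \ref{item:1-cell-composite-is-n-exangulated}, I would first observe that \(\SL\circ\SF\) is additive and that \(\tensor[]{\Phi}{_{\SF\times\SF}}\circ\Gamma\) is a natural transformation \(\BE(-,-)\Rightarrow\BE''(\SL\SF-,\SL\SF-)\) with components \((\tensor[]{\Phi}{_{\SF\times\SF}}\circ\Gamma\tensor[]{)}{_{(C,A)}}=\Phi_{(\SF C,\SF A)}\circ\tensor[]{\Gamma}{_{(C,A)}}\). The structure condition is then a two-step application: assuming \(\fs(\delta)=[X^{\combul}]\), the fact that \((\SF,\Gamma)\) is \(n\)-exangulated gives \(\fs'(\tensor[]{\Gamma}{_{(X^{n+1},X^{0})}}(\delta))=[\SF_{\com}X^{\combul}]\); applying the hypothesis that \((\SL,\Phi)\) is \(n\)-exangulated to the extension \(\tensor[]{\Gamma}{_{(X^{n+1},X^{0})}}(\delta)\) then yields \(\fs''(\Phi_{(\SF X^{n+1},\SF X^{0})}(\tensor[]{\Gamma}{_{(X^{n+1},X^{0})}}(\delta)))=[\SL_{\com}\SF_{\com}X^{\combul}]\). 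Using \(\SL_{\com}\SF_{\com}=(\SL\SF\tensor[]{)}{_{\com}}\) and the component formula, this is exactly the required condition. Associativity and unitality then follow by unwinding \cref{def:composing-n-exangulated-functors}: functor composition is associative, and for a further \(n\)-exangulated functor \((\SM,\Psi)\) both bracketings of the triple composite have underlying functor \(\SM\SL\SF\) and natural transformation with components \(\Psi_{(\SL\SF C,\SL\SF A)}\circ\Phi_{(\SF C,\SF A)}\circ\tensor[]{\Gamma}{_{(C,A)}}\), independently of the bracketing, while the identity functor of \ref{item:identity-is-n-exangulated} contributes identity components on both sides.

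For \ref{item:induced-scriptE-functors-respect-identity-and-composition}, the first equality is immediate: \(\tensor[]{\SE}{_{(\idfunc{\CC},\iden{\BE})}}(\delta)=(\iden{\BE}\tensor[]{)}{_{(C,A)}}(\delta)=\delta\) on objects, and \(\tensor[]{\SE}{_{(\idfunc{\CC},\iden{\BE})}}(a,c)=(\idfunc{\CC}a,\idfunc{\CC}c)=(a,c)\) on morphisms, so this functor is \(\iden{\BE\dExt{\CC}}\). For the second equality I would compare both sides on objects and morphisms. On an object \(\delta\inn\BE(C,A)\), the left side gives \((\tensor[]{\Phi}{_{\SF\times\SF}}\circ\Gamma\tensor[]{)}{_{(C,A)}}(\delta)=\Phi_{(\SF C,\SF A)}(\tensor[]{\Gamma}{_{(C,A)}}(\delta))\), while the right side gives \(\tensor[]{\SE}{_{(\SL,\Phi)}}(\tensor[]{\Gamma}{_{(C,A)}}(\delta))=\Phi_{(\SF C,\SF A)}(\tensor[]{\Gamma}{_{(C,A)}}(\delta))\), where I use that \(\tensor[]{\Gamma}{_{(C,A)}}(\delta)\inn\BE'(\SF C,\SF A)\); on a morphism \((a,c)\) both sides equal \((\SL\SF a,\SL\SF c)\), since each of the functors \(\tensor[]{\SE}{_{(-,-)}}\) respects morphisms over the relevant functor. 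The main obstacle, though modest, lies precisely here: as emphasised in \cref{rem:subtle-remark}, a functor respecting morphisms over another functor is not determined by its action on underlying pairs alone, so I must confirm not merely that the underlying morphism pairs coincide but that the domains and codomains agree as elements of the relevant extension groups. The object-level computation above secures exactly this, and so by \cref{rem:equality-of-morphisms-of-extensions} the two functors coincide.
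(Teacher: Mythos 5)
Your proof is correct and follows essentially the same route as the paper's: the paper declares parts (i), (ii) and the first equality of (iii) straightforward (which you simply spell out), and for the second equality of (iii) it likewise checks agreement on objects, uses the fact that the three induced functors respect morphisms over \(\SF\), \(\SL\) and \(\SL\SF\) to see the underlying morphism pairs coincide, and concludes via \cref{rem:equality-of-morphisms-of-extensions}. Your explicit attention to the subtlety of \cref{rem:subtle-remark}---that agreement of underlying pairs must be supplemented by agreement of domains and codomains---is exactly the point the paper's proof also hinges on.
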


\begin{proof}
Checking \ref{item:identity-is-n-exangulated}, \ref{item:1-cell-composite-is-n-exangulated} and the first part of \ref{item:induced-scriptE-functors-respect-identity-and-composition} is straightforward. 
For the second claim of \ref{item:induced-scriptE-functors-respect-identity-and-composition}, 
note that
\(
\tensor[]{\SE}{_{(\SL,\Phi)\circ(\SF,\Gamma)}}
    = \tensor[]{\SE}{_{(\SL\SF,\tensor[]{\Phi}{_{\SF\times\SF}}\Gamma)}}
\) 
and 
\(
\tensor[]{\SE}{_{(\SL,\Phi)}} \circ \tensor[]{\SE}{_{(\SF,\Gamma)}}
\) 
agree on objects of \(\BE\dExt{\CC}\). 
By \cref{thm:characterisation-of-n-exangulated-functors}, the functors \(\tensor[]{\SE}{_{(\SF,\Gamma)}}\), \(\tensor[]{\SE}{_{(\SL,\Phi)}}\) and \(
\tensor[]{\SE}{_{(\SL\SF,\tensor[]{\Phi}{_{\SF\times\SF}}\Gamma)}}
\) respect morphisms over \(\SF\), \(\SL\) and \(\SL\SF\), respectively. 
Since 
\(
\tensor[]{\SE}{_{(\SL,\Phi)}}(\tensor[]{\SE}{_{(\SF,\Gamma)}}(a,c))
    = (\SL \SF a, \SL \SF c)
    = \tensor[]{\SE}{_{(\SL\SF,\tensor[]{\Phi}{_{\SF\times\SF}}\Gamma)}}(a,c)
\)
for each morphism \((a,c)\) in \(\BE\dExt{\CC}\), it follows from \cref{rem:equality-of-morphisms-of-extensions} 
that \(\tensor[]{\SE}{_{(\SL\SF,\tensor[]{\Phi}{_{\SF\times\SF}}\Gamma)}}\) and \(\tensor[]{\SE}{_{(\SL,\Phi)}}\circ \tensor[]{\SE}{_{(\SF,\Gamma)}}\) also agree on morphisms of \(\BE\dExt{\CC}\), and hence are equal as functors.
\end{proof}


\section{A \texorpdfstring{\(2\)}{2}-categorical perspective on \texorpdfstring{\(n\)}{n}-exangulated categories}
\label{section:4}

The authors are very grateful to Hiroyuki Nakaoka for encouraging them to think about morphisms between \(n\)-exangulated functors, which prompted the results in this section. 
In particular, we introduce the notion of \(n\)-exangulated natural transformations, which recovers \cite[Def.~2.11(3)]{NakaokaOgawaSakai-localization-of-extriangulated-categories} in the case \(n=1\). 
This enables us to make considerations that are \mbox{\(2\)-category}-theoretic in the sense of \cite[Sec.\ XII.3]{MacLane-categories-for-the-working-mathematician}. 
Some definitions have been developed independently in He--He--Zhou \cite{HeHeZhou-localization-of-n-exangulated-categories} and in Enomoto--Saito \cite{enomoto2022grothendieck}. 
The \(2\)-category of small abelian categories has been studied before; see, for example, work of Prest--Rajani \cite{Prest-Rajani-Structure-sheaves}.

A \emph{\(2\)-category} consists of \(0\)-cells, \(1\)-cells and \(2\)-cells, which should be thought of as objects, morphisms between objects, 
and morphisms between morphisms, respectively, 
satisfying some axioms; 
see e.g.\ \cite[p.~273]{MacLane-categories-for-the-working-mathematician}. 
We show that the category \(\exang{n}\) of small \(n\)-exangulated categories is a \(2\)-category, 
with \(n\)-exangulated functors as \(1\)-cells 
and 
\(n\)-exangulated natural transformations as \(2\)-cells; 
see \cref{cor:2-cats}. Recall that a category is said to be \emph{small} if the class of objects and the class of morphisms are sets. 
More generally, we prove that similar properties hold for the category \(\Exang{n}\) of all \(n\)-exangulated categories; see \cref{prop:hom-category-of-n-exangulated-categories}.

We start this section by giving the definition of \(n\)-exangulated natural transformations, before considering their compositions in Subsection~\ref{subsec:composing-n-exangulated-natural-transformations}. 
Having established a notion of morphisms between \(n\)-exangulated functors, 
we 
will be in position 
to introduce and study \mbox{\(n\)-exangulated} adjoints and equivalences 
in Subsection~\ref{subsec:n-exangulated-adjoints}. 
In Subsection~\ref{subsec:2-categorical-viewpoint} we continue our \(2\)-categorical approach, 
leading to the construction of a \(2\)-functor \(\updave \colon \exang{n} \to \exactcat\) to the \(2\)-category of small  exact categories; see \cref{cor:finalcorollary}, which 
yields 
\cref{thmx:finalcorollary} from \cref{sec:introduction}. 
The proof of this statement goes via a more general result, namely \cref{thm:2-functor}, where we establish a functor \(\updave \colon \Exang{n} \to \Exactcat\) with similar properties, 
but without 
any smallness assumptions. 
If one 
ignores the set-theoretic issue described in \cref{rem:not-2-categories}, one can interpret our work in Subsection~\ref{subsec:2-categorical-viewpoint} as constructing a \(2\)-functor \(\updave\) from the category \(\Exang{n}\) to the category \(\Exactcat\) of all exact categories. 
A fundamental step in defining 
\(\updave \colon \Exang{n} \to \Exactcat\) is the characterisation of \(n\)-exangulated natural transformations given in \cref{thm:characterisation-of-n-exangulated-natural-transformations}. 
A full definition of \(\updave\) is given in \cref{def:updave}.

For \(n\)-exangulated natural transformations we use Hebrew letters \(\beth\) (beth) and \(\daleth\) (daleth).

\begin{defn}
\label{def:n-exangulated-natural-transformation}
Let \((\SF,\Gamma), (\SG,\Lambda)\colon(\CC,\BE,\fs)\to(\CC',\BE',\fs')\) be \(n\)-exangulated functors and 
\(\beth\colon\SF\Rightarrow\SG\) 
a natural transformation. 
We call  \(\beth\colon(\SF,\Gamma)\Rightarrow(\SG,\Lambda)\) an \emph{\(n\)-exangulated natural transformation} if, for all \(A,C\inn\CC\) and each 
\(\delta\inn\BE(C,A)\), 
the pair \((\tensor[]{\beth}{_{A}},\tensor[]{\beth}{_{C}})\) is a morphism \(\tensor[]{\Gamma}{_{(C,A)}}(\delta) \to \tensor[]{\Lambda}{_{(C,A)}}(\delta)\) in \(\BE'\dExt{\CC'}\), 
i.e.\ 
\begin{equation}
\label{eqn:n-exangulated-natural-transformation-property}
    (\tensor*[]{\beth}{_{A}}\tensor*[]{)}{_{\BE'}}\tensor[]{\Gamma}{_{(C,A)}}(\delta)
    = (\tensor[]{\beth}{_{C}}\tensor[]{)}{^{\BE'}}\tensor[]{\Lambda}{_{(C,A)}}(\delta).
\end{equation}
\end{defn}

See Examples~\ref{example:extriangulated-is-1-exangulated}, 
\ref{example:n+2-angulated-category-is-n-exangulated} and 
\ref{example:n-exact-category-is-n-exangulated} 
for discussions on the notion of an \(n\)-exangulated natural transformation in some familiar settings.

\begin{setup}
\label{setup:section4} 
For the remainder of this section, we use the standing assumptions and notation as indicated in the diagram below. Fix \(n\geq 1\). 
We assume that the categories \((\CC,\BE,\fs)\), \((\CC',\BE',\fs')\) and \((\CC'',\BE'',\fs'')\) are \(n\)-exangulated. 
The seven functors between these categories, drawn horizontally, and the four natural transformations, draw vertically, are assumed to be \(n\)-exangulated functors and \(n\)-exangulated natural transformations, respectively. 
\[
\begin{tikzcd}
&{}
    \arrow[Rightarrow,
            shorten <= 6pt,
            shorten >= 6pt
            ]{d}
                [xshift=1pt, yshift=-1pt]{\beth}
&
&{}
    \arrow[Rightarrow,
            shorten <= 11.5pt,
            shorten >= 6pt,
            xshift=1pt
            ]{d}
                [xshift=1pt, yshift=-1pt]{\daleth}
&
\\
(\CC,\BE,\fs)
    \arrow[bend left=40]{rr}
            [description, xshift=-1.5pt, yshift=-1pt]{(\SF,\Gamma)}
    \arrow{rr}
            [description]{(\SG,\Lambda)}
    \arrow[bend right=40]{rr}
            [description, xshift=-1.5pt, yshift=1pt]{(\SH,\Theta)}
&{}
    \arrow[Rightarrow,
            shorten <= 6pt,
            shorten >= 6pt,
            yshift=5pt
            ]{d}
                [xshift=1pt, yshift=3pt]{\beth'}
&(\CC',\BE',\fs')
    \arrow[bend left=40]{rr}
            [description, xshift=-2pt, yshift=-2pt]{(\SL,\Phi)}
    \arrow{rr}
            [description]{(\SM,\Psi)}
    \arrow[bend right=40]{rr}
            [description, xshift=-3pt, yshift=2pt]{(\SN,\Omega)}
    \arrow[bend left=80, 
            shorten <= -1cm,
            shorten >= -1cm,
            shift left=1cm, 
            start anchor={[xshift=-3pt]},
            end anchor={[xshift=3pt]}
            ]{ll}
                [description, yshift=9pt]{(\SA,\NT)}
&{}
    \arrow[Rightarrow,
            shorten <= 6pt,
            shorten >= 6pt,
            xshift=1pt, 
            yshift=5pt
            ]{d}
                [xshift=1pt, yshift=3pt]{\daleth'}
&(\CC'',\BE'',\fs'')
\\
&{}&&{}&
\end{tikzcd}
\]
\end{setup}


\subsection{Composing \texorpdfstring{\(n\)}{n}-exangulated natural transformations}
\label{subsec:composing-n-exangulated-natural-transformations}

In a \(2\)-category, one should be able to compose \(2\)-cells in two ways that are associative and unital \cite[p.~273]{MacLane-categories-for-the-working-mathematician}. 
Our aim in this section is hence to consider two notions of composition of \(n\)-exangulated natural transformations. 
These are defined by using the classical notions of vertical and horizontal compositions, which apply to natural transformations in general \cite[pp.~40, 42]{MacLane-categories-for-the-working-mathematician}.

\begin{defn}
\label{notn:vertical-composition-and-horizontal-composition-of-2-cells} 
\begin{enumerate}[label=\textup{(\roman*)}]
    \item\label{item:identity-n-exangulated-transformation} 
    We define the \emph{identity} 
    \(\iden{(\SF, \Gamma)}\)
    of \((\SF,\Gamma)\) to be 
    \(
        \iden{\SF}
        \colon 
        \SF\Rightarrow\SF
    \).
    
    \item The \emph{vertical composition} \(\beth'\circ_{v}\beth\) 
    is given by  
    \(
    (\beth'\circ_{v}\beth\tensor[]{)}{_{X}}\deff\beth'_{X}\tensor[]{\beth}{_{X}}
    \) 
    for each \(X\inn\CC\).
    
    \item The $n$-exangulated natural transformation \(\beth\) is said to be
an \emph{\(n\)-exangulated natural isomorphism} if it has an \(n\)-exangulated inverse under vertical composition. 
Note that this is equivalent to 
\(\tensor[]{\beth}{_{X}}\) being an isomorphism for every \(X\inn\CC\).

    \item 
    The \emph{horizontal composition} \(\daleth\circ_{h}\beth\) 
    is given by 
    \(
    (\daleth\circ_{h}\beth\tensor[]{)}{_{X}}\deff \tensor[]{\daleth}{_{\SG X}}(\SL \tensor[]{\beth}{_{X}})
    \) 
    for each \(X\inn\CC\).
\end{enumerate} 
\end{defn}

It follows from classical theory that the vertical and horizontal compositions of \(n\)-exan\-gu\-lat\-ed natural transformations are again natural \cite[pp.~40, 42--43]{MacLane-categories-for-the-working-mathematician}. 
However, it is not clear that these compositions are \(n\)-exangulated. 
This is checked in \cref{prop:compositions-are-n-exangulated}.

First, we verify associativity and unitality, as well as a useful commutativity property known as the \emph{interchange law} (or \emph{middle-four exchange}).

\begin{lem}
\label{prop:identity-is-n-exangulated-associativity-unitality-and-middle-four-exchange}
The following statements hold.
\begin{enumerate}[label=\textup{(\roman*)}]
    \item\label{item:identity-n-exangulated-transformation-is-n-exangulated} 
        The identity 
        \(\iden{(\SF,\Gamma)}\colon (\SF,\Gamma)\Rightarrow(\SF,\Gamma)\) of \((\SF,\Gamma)\) 
        is \(n\)-exangulated.
    
    \item\label{item:assoc-unital-n-exangulated-nat-trans} 
        Both \(\circ_{v}\) and \(\circ_{h}\) are associative and unital on \(n\)-exangulated natural transformations. 
    
    \item\label{item:middle-4-exchange}
        The interchange law 
        \(
        (\daleth' \circ_{h}\beth' )\circ_{v} (\daleth \circ_{h} \beth)=(\daleth' \circ_{v}\daleth)\circ_{h}(\beth' \circ_{v} \beth)
        \) 
        holds.    
\end{enumerate}
\end{lem}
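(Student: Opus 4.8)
The plan is to treat the entire lemma as classical two-categorical bookkeeping, exploiting that an $n$-exangulated natural transformation is nothing but an ordinary natural transformation satisfying the extra condition \eqref{eqn:n-exangulated-natural-transformation-property}, and that $\circ_{v}$, $\circ_{h}$ and $\iden{(\SF,\Gamma)}$ are defined solely in terms of the underlying natural transformations. Since closure of these operations within the class of $n$-exangulated natural transformations is only established later in \cref{prop:compositions-are-n-exangulated}, I would read the equations in parts (ii) and (iii) as equalities of natural transformations, which is precisely the level at which the classical results operate.

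For part (i), I would verify the defining condition \eqref{eqn:n-exangulated-natural-transformation-property} directly for $\beth = \iden{(\SF,\Gamma)} = \iden{\SF}$. Here $\beth_{A} = \iden{\SF A}$ and $\beth_{C} = \iden{\SF C}$, so both $(\iden{\SF A})_{\BE'} = \BE'(\SF C, \iden{\SF A})$ and $(\iden{\SF C})^{\BE'} = \BE'(\iden{\SF C}, \SF A)$ are the identity homomorphism on $\BE'(\SF C, \SF A)$ by functoriality of $\BE'$. Hence each side of \eqref{eqn:n-exangulated-natural-transformation-property} equals $\Gamma_{(C,A)}(\delta)$, and $(\iden{\SF A}, \iden{\SF C})$ is the identity morphism of $\Gamma_{(C,A)}(\delta)$ in $\BE'\dExt{\CC'}$; this is immediate.

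For part (ii), I would invoke the classical facts that vertical and horizontal composition of natural transformations are associative and unital \cite[pp.~40, 42--43]{MacLane-categories-for-the-working-mathematician}. As $(\beth'\circ_{v}\beth)_{X} = \beth'_{X}\beth_{X}$ and $(\daleth\circ_{h}\beth)_{X} = \daleth_{\SG X}(\SL\beth_{X})$ are verbatim the classical formulas, associativity transfers directly. For unitality of $\circ_{v}$ the unit is $\iden{(\SF,\Gamma)}$, shown $n$-exangulated in part (i); for $\circ_{h}$ the unit is the identity natural transformation of the identity functor, associated with the identity $n$-exangulated functor $(\idfunc{\CC},\iden{\BE})$ of \cref{def:composing-n-exangulated-functors}. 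No further computation is required.

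For part (iii), I would evaluate both sides of the interchange law at an arbitrary object $X\inn\CC$ and reduce to naturality. Unwinding the definitions, the left-hand side at $X$ equals $\daleth'_{\SH X}(\SM\beth'_{X})\daleth_{\SG X}(\SL\beth_{X})$, while the right-hand side equals $\daleth'_{\SH X}\daleth_{\SH X}(\SL\beth'_{X})(\SL\beth_{X})$. These two expressions agree in their outer factors $\daleth'_{\SH X}$ and $\SL\beth_{X}$ and differ only in their middle factors, so it suffices to establish $(\SM\beth'_{X})\daleth_{\SG X} = \daleth_{\SH X}(\SL\beth'_{X})$, which is exactly the naturality square of $\daleth\colon\SL\Rightarrow\SM$ at the morphism $\beth'_{X}\colon\SG X\to\SH X$. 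This is the classical middle-four exchange and holds for arbitrary natural transformations. The main obstacle is thus presentational rather than mathematical: the genuine content is entirely classical, and the only care needed is to confirm that the $n$-exangulated operations agree on the nose with the underlying $2$-categorical ones, and to be explicit that the asserted equalities hold at the level of natural transformations, pending the closure statement of \cref{prop:compositions-are-n-exangulated}.
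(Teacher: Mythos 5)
Your proposal is correct and follows essentially the same route as the paper: part (i) is the same trivial direct verification of \eqref{eqn:n-exangulated-natural-transformation-property} for \(\beth=\idfunc{\SF}\), and parts (ii) and (iii) are handled, as in the paper, by appealing to the classical facts about natural transformations from Mac~Lane, with your reading of the equalities as holding at the level of underlying natural transformations (closure being deferred to \cref{prop:compositions-are-n-exangulated}) matching the paper's intent exactly. Your explicit unwinding of the interchange law is just a spelled-out version of the classical result the paper cites, and it is accurate.
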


\begin{proof}
Equation 
\eqref{eqn:n-exangulated-natural-transformation-property} 
is trivially satisfied 
when \(\Lambda=\Gamma\) and \(\beth=\idfunc{\SF}\), which yields \ref{item:identity-n-exangulated-transformation-is-n-exangulated}. 
The claims of \ref{item:assoc-unital-n-exangulated-nat-trans} and \ref{item:middle-4-exchange} 
hold for natural transformations in general; 
see \cite[pp.~40, 43]{MacLane-categories-for-the-working-mathematician}. 
\end{proof}

To show that compositions of \(n\)-exangulated natural transformations are again \(n\)-exan\-gu\-lat\-ed, 
we use 
whiskering \cite[p.~275]{MacLane-categories-for-the-working-mathematician}, 
which is 
a special case of horizontal composition.

\begin{notn}
\label{notn:whiskers-and-identity-nat-trans}
\begin{enumerate}[label=\textup{(\roman*)}]

    \item\label{item:whisker-G-and-daleth} 
    The \emph{whiskering} \(\tensor[]{\daleth}{_{\SG}} \colon \SL\SG \Rightarrow \SM\SG\) \emph{of} \(\SG\) \emph{and} \(\daleth\) is the natural transformation given by \((\tensor[]{\daleth}{_{\SG}}\tensor[]{)}{_{X}}\deff\tensor[]{\daleth}{_{\SG X}}\) 
    for \(X\inn \CC\).
    
    \item\label{item:whisker-daleth-and-X} 
    The \emph{whiskering} 
    \(
    \SL\beth \colon \SL\SF \Rightarrow \SL\SG
    \) \emph{of} \(\beth\)
    \emph{and} \(\SL\) is the natural transformation given by \((\SL\beth\tensor[]{)}{_{X}}\deff\SL\tensor[]{\beth}{_{X}}\) for 
    \(X\inn\CC\).
\end{enumerate}
\end{notn}

\begin{rem}
\label{rem:whiskering-combined-with-vertical-equivalent-to-horizontal}
One 
can view horizontal composition as the vertical composition of some whiskerings \cite[p.~43]{MacLane-categories-for-the-working-mathematician}. 
Note that 
\(
\tensor[]{\daleth}{_{\SG}}=\daleth\circ_{h}\idfunc{\SG}
\) 
and 
\(
\SL\beth = \idfunc{\SL}\circ_{h}\beth
\). 
Hence, unitality of vertical composition combined with the interchange law (see \ref{item:assoc-unital-n-exangulated-nat-trans} and \ref{item:middle-4-exchange} of  \cref{prop:identity-is-n-exangulated-associativity-unitality-and-middle-four-exchange}) yields 
\begin{equation}
\label{eqn:horizontal-comp-is-vertical-comp-of-whiskers}
\daleth\circ_{h}\beth 
    = (\daleth \circ_{v}\idfunc{\SL})\circ_{h}(\idfunc{\SG} \circ_{v} \beth) 
    = (\daleth \circ_{h}\idfunc{\SG} )\circ_{v} (\idfunc{\SL} \circ_{h} \beth)
    = \tensor[]{\daleth}{_{\SG}}\circ_{v}\SL \beth.
\end{equation}
\end{rem}

Recall from \cref{thm:characterisation-of-n-exangulated-functors} the exact functor 
\(
\tensor[]{\SE}{_{(\SF,\Gamma)}}
        \colon
    (\BE\dExt{\CC},\CX_{\BE})
        \to
    (\BE'\dExt{\CC'},\CX_{\BE'})
\) given by \(\tensor[]{\SE}{_{(\SF,\Gamma)}}(\delta) = \tensor[]{\Gamma}{_{(C,A)}}(\delta)\) on objects \(\delta\inn\BE(C,A)\), and recall also that \(\tensor[]{\SE}{_{(\SF,\Gamma)}}\) respects morphisms and distinguished \(n\)-exangles over \(\SF\).

\begin{prop}
\label{prop:whiskerings-are-n-exangulated}
The following statements hold.
\begin{enumerate}[label=\textup{(\roman*)}]

    \item\label{item:whisker-G-and-daleth-is-n-exangulated} 
    The whiskering 
    \(
        \tensor[]{\daleth}{_{\SG}}\colon (\SL,\Phi)\circ(\SG,\Lambda)\Rightarrow(\SM,\Psi)\circ(\SG,\Lambda)
    \) 
    is \(n\)-exangulated.
    
    \item\label{item:whisker-daleth-and-X-is-n-exangulated} 
    The whiskering 
    \(
        \SL\beth\colon (\SL,\Phi)\circ(\SF,\Gamma)\Rightarrow(\SL,\Phi)\circ(\SG,\Lambda)
    \) 
    is 
    \(n\)-exangulated. 
\end{enumerate}
\end{prop}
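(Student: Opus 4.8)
The plan is to verify, for each of the two whiskerings separately, that it is a natural transformation and that it satisfies the defining equation \eqref{eqn:n-exangulated-natural-transformation-property} of an \(n\)-exangulated natural transformation. Naturality of both whiskerings is classical, as each is a special case of horizontal composition of natural transformations (see \cref{rem:whiskering-combined-with-vertical-equivalent-to-horizontal}), so the real content lies in checking the extension-theoretic condition. Throughout I would spell out the composite \(1\)-cells explicitly using \cref{def:composing-n-exangulated-functors}: the relevant functors are \((\SL,\Phi)\circ(\SG,\Lambda)=(\SL\SG,\tensor[]{\Phi}{_{\SG\times\SG}}\circ\Lambda)\), and likewise for the others, where \((\tensor[]{\Phi}{_{\SG\times\SG}}\circ\Lambda)_{(C,A)}=\Phi_{(\SG C,\SG A)}\circ\tensor[]{\Lambda}{_{(C,A)}}\).

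For part~\ref{item:whisker-G-and-daleth-is-n-exangulated}, I would fix \(A,C\inn\CC\) and \(\delta\inn\BE(C,A)\), and set \(\epsilon\deff\tensor[]{\Lambda}{_{(C,A)}}(\delta)\inn\BE'(\SG C,\SG A)\). Since the component of \(\tensor[]{\daleth}{_{\SG}}\) at \(A\) is \(\tensor[]{\daleth}{_{\SG A}}\), the condition to verify is
\[
(\tensor[]{\daleth}{_{\SG A}}\tensor[]{)}{_{\BE''}}\Phi_{(\SG C,\SG A)}(\epsilon)
    = (\tensor[]{\daleth}{_{\SG C}}\tensor[]{)}{^{\BE''}}\Psi_{(\SG C,\SG A)}(\epsilon).
\]
This is precisely the instance of \eqref{eqn:n-exangulated-natural-transformation-property} for the \(n\)-exangulated natural transformation \(\daleth\colon(\SL,\Phi)\Rightarrow(\SM,\Psi)\), evaluated at the objects \(\SG A,\SG C\inn\CC'\) and the extension \(\epsilon\inn\BE'(\SG C,\SG A)\). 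Hence part~\ref{item:whisker-G-and-daleth-is-n-exangulated} is immediate from the standing hypothesis in \cref{setup:section4} that \(\daleth\) is \(n\)-exangulated.

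For part~\ref{item:whisker-daleth-and-X-is-n-exangulated}, the component of \(\SL\beth\) at \(A\) is \(\SL\tensor[]{\beth}{_{A}}\), so for \(\delta\inn\BE(C,A)\) I must check
\[
(\SL\tensor[]{\beth}{_{A}}\tensor[]{)}{_{\BE''}}\Phi_{(\SF C,\SF A)}(\tensor[]{\Gamma}{_{(C,A)}}(\delta))
    = (\SL\tensor[]{\beth}{_{C}}\tensor[]{)}{^{\BE''}}\Phi_{(\SG C,\SG A)}(\tensor[]{\Lambda}{_{(C,A)}}(\delta)).
\]
The key step is to route both sides through the single component \(\Phi_{(\SF C,\SG A)}\) by invoking the naturality of \(\Phi\colon\BE'(-,-)\Rightarrow\BE''(\SL-,\SL-)\) in each variable. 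Naturality in the covariant variable applied to \(\tensor[]{\beth}{_{A}}\colon\SF A\to\SG A\) rewrites the left-hand side as \(\Phi_{(\SF C,\SG A)}((\tensor[]{\beth}{_{A}}\tensor[]{)}{_{\BE'}}\tensor[]{\Gamma}{_{(C,A)}}(\delta))\), while naturality in the contravariant variable applied to \(\tensor[]{\beth}{_{C}}\colon\SF C\to\SG C\) rewrites the right-hand side as \(\Phi_{(\SF C,\SG A)}((\tensor[]{\beth}{_{C}}\tensor[]{)}{^{\BE'}}\tensor[]{\Lambda}{_{(C,A)}}(\delta))\). The two arguments of \(\Phi_{(\SF C,\SG A)}\) coincide by the defining equation \eqref{eqn:n-exangulated-natural-transformation-property} for \(\beth\colon(\SF,\Gamma)\Rightarrow(\SG,\Lambda)\), whence the two sides agree.

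I expect part~\ref{item:whisker-daleth-and-X-is-n-exangulated} to be the main obstacle. Unlike part~\ref{item:whisker-G-and-daleth-is-n-exangulated}, where the required equality is literally a restriction of the hypothesis on \(\daleth\) to objects and extensions in the image of \((\SG,\Lambda)\), here one must combine two \emph{distinct} naturality squares of \(\Phi\) with the \(n\)-exangulatedness of \(\beth\). The delicate bookkeeping is tracking the source and target objects: one needs to confirm that both sides live in \(\BE''(\SL\SF C,\SL\SG A)\) and factor through the common component \(\Phi_{(\SF C,\SG A)}\), so that the equality supplied by \(\beth\) can be applied. Once this is in place, the conclusion is immediate.
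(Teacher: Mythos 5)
Your proposal is correct and takes essentially the same route as the paper: part~(i) is the same direct specialisation of the defining equation of \(\daleth\) at the objects \(\SG A,\SG C\) and the extension \(\tensor[]{\Lambda}{_{(C,A)}}(\delta)\), and part~(ii) sandwiches the \(n\)-exangulatedness of \(\beth\) between two naturality squares of \(\Phi\), landing both sides in the common component at \((\SF C,\SG A)\). The paper expresses those two squares as \cref{lem:functor-respects-morphisms-plays-well-with-domains-codomains-functions}(ii) applied to the extension-category functor \(\tensor[]{\SE}{_{(\SL,\Phi)}}\), but since \(\tensor[]{\SE}{_{(\SL,\Phi)}}\) is defined componentwise by \(\Phi\), this is the identical computation in different packaging.
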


\begin{proof}
\ref{item:whisker-G-and-daleth-is-n-exangulated}\; 
By \cref{lem:identity-composition-n-exangulated-functors-and-extension-functors-respect-compositions}\ref{item:1-cell-composite-is-n-exangulated}, we have that the composites 
\(
(\SL,\Phi)\circ(\SG,\Lambda)=(\SL\SG,\tensor[]{\Phi}{_{\SG\times\SG}}\Lambda)
\) 
and 
\(
(\SM,\Psi)\circ(\SG,\Lambda)=(\SM\SG,\tensor[]{\Psi}{_{\SG\times\SG}}\Lambda)
\)
are \(n\)-exangulated functors from \((\CC,\BE,\fs)\) to \((\CC'',\BE'',\fs'')\). 
Given an extension \(\delta\inn\BE(C,A)\), we have 
\(\tensor[]{\Lambda}{_{(C,A)}}(\delta)\inn\BE'(\SG C, \SG A)\). 
As
\(
\daleth \colon (\SL,\Phi)\Rightarrow(\SM,\Psi)
\) 
is an \(n\)-exangulated natural transformation, we thus obtain
\[
(\tensor[]{\daleth}{_{\SG A}}
\tensor[]{)}{_{\BE''}}\tensor[]{\Phi}{_{(\SG C, \SG A)}}(\tensor[]{\Lambda}{_{(C,A)}}(\delta)) 
    =   (\tensor[]{\daleth}{_{\SG C}}
        \tensor[]{)}{^{\BE''}}
        \tensor[]{\Psi}{_{(\SG C, \SG A)}}(
        \tensor[]{\Lambda}{_{(C,A)}}(\delta)
        ),
\]
which verifies that \(\tensor[]{\daleth}{_{\SG}}\) is an \(n\)-exangulated natural transformation.

\ref{item:whisker-daleth-and-X-is-n-exangulated}\; 
The pairs \(
(\SL,\Phi)\circ(\SF,\Gamma)=(\SL\SF,\tensor[]{\Phi}{_{\SF\times\SF}}\Gamma)
\) 
and
\(
(\SL,\Phi)\circ(\SG,\Lambda)=(\SL\SG,\tensor[]{\Phi}{_{\SG\times\SG}}\Lambda)
\) 
are \(n\)-exangulated functors from 
\((\CC,\BE,\fs)\) to 
\((\CC'',\BE'',\fs'')\) 
by \cref{lem:identity-composition-n-exangulated-functors-and-extension-functors-respect-compositions}\ref{item:1-cell-composite-is-n-exangulated}. 
Consider the functor 
\(
\tensor[]{\SE}{_{(\SL,\Phi)}}
\colon\BE'\dExt{\CC'}\to\BE''\dExt{\CC''}
\) from  \cref{thm:characterisation-of-n-exangulated-functors}. 
For \(\delta\inn\BE(C,A)\), we have
\begin{align*}
    (\tensor*[]{\SL\beth}{_{A}}\tensor*[]{)}{_{\BE''}}\tensor*[]{\Phi}{_{(\SF C,\SF A)}}(\Gamma_{(C,A)}(\delta)) 
    &= (\tensor*[]{\SL\beth}{_{A}}\tensor*[]{)}{_{\BE''}}\tensor[]{\SE}{_{(\SL,\Phi)}}(\Gamma_{(C,A)}(\delta))
        &&\text{using the definition of \(\tensor[]{\SE}{_{(\SL,\Phi)}}\)}\\
    &= \tensor[]{\SE}{_{(\SL,\Phi)}} ((\tensor[]{\beth}{_{A}}\tensor[]{)}{_{\BE'}}\tensor[]{\Gamma}{_{(C,A)}}(\delta))&&\text{by \cref{lem:functor-respects-morphisms-plays-well-with-domains-codomains-functions}(ii)}\\
    &= \tensor[]{\SE}{_{(\SL,\Phi)}} ((\tensor[]{\beth}{_{C}}\tensor[]{)}{^{\BE'}}\tensor[]{\Lambda}{_{(C,A)}}(\delta)) &&\text{as }\beth\text{ is }n\text{-exangulated}\\
    &= (\tensor*[]{\SL\beth}{_{C}}\tensor*[]{)}{^{\BE''}}\tensor[]{\SE}{_{(\SL,\Phi)}}(\Lambda_{(C,A)}(\delta))&&\text{by \cref{lem:functor-respects-morphisms-plays-well-with-domains-codomains-functions}(ii)}\\
    &= (\tensor*[]{\SL\beth}{_{C}}\tensor*[]{)}{^{\BE''}} \tensor*[]{\Phi}{_{(\SG C,\SG A)}}(\Lambda_{(C,A)}(\delta))&&\text{using the definition of \(\tensor[]{\SE}{_{(\SL,\Phi)}}\).}
\end{align*}
This verifies that \(\SL\beth\) is an \(n\)-exangulated natural transformation.
\end{proof}

We are now in position to show that the collection of \(n\)-exangulated natural transformations is closed under vertical and horizontal composition.
\begin{prop}
\label{prop:compositions-are-n-exangulated}
The following statements hold.
\begin{enumerate}[label=\textup{(\roman*)}]
    
    \item\label{item:vertical-composition-is-n-exangulated} The vertical composition \(\beth'\circ_{v}\beth\colon (\SF,\Gamma)\Rightarrow(\SH,\Theta)\) is \(n\)-exangulated.
    
    \item\label{item:horizontal-composition-is-n-exangulated} The horizontal composition \(\daleth\circ_{h}\beth\colon (\SL,\Phi)\circ(\SF,\Gamma)\Rightarrow(\SM,\Psi)\circ(\SG,\Lambda)\) is \(n\)-exangulated.
\end{enumerate}
\end{prop}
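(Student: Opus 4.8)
The plan is to prove part \ref{item:vertical-composition-is-n-exangulated} by checking the defining identity \eqref{eqn:n-exangulated-natural-transformation-property} head-on, and then to deduce part \ref{item:horizontal-composition-is-n-exangulated} formally from part \ref{item:vertical-composition-is-n-exangulated} by expressing the horizontal composite as a vertical composite of whiskerings.

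For part \ref{item:vertical-composition-is-n-exangulated}, I would fix $\delta \inn \BE(C,A)$ and recall that $(\beth' \circ_{v} \beth)_A = \beth'_A \beth_A$ and $(\beth' \circ_{v} \beth)_C = \beth'_C \beth_C$. Since $(-)_{\BE'}$ is covariantly functorial and $(-)^{\BE'}$ is contravariantly functorial in its argument, I have $(\beth'_A \beth_A)_{\BE'} = (\beth'_A)_{\BE'}(\beth_A)_{\BE'}$ and $(\beth'_C \beth_C)^{\BE'} = (\beth_C)^{\BE'}(\beth'_C)^{\BE'}$. The strategy is then to transform the left-hand side $(\beth'_A \beth_A)_{\BE'}\Gamma_{(C,A)}(\delta)$ of \eqref{eqn:n-exangulated-natural-transformation-property} into its right-hand side one layer at a time: first I would apply that $\beth$ is $n$-exangulated, in the form $(\beth_A)_{\BE'}\Gamma_{(C,A)}(\delta) = (\beth_C)^{\BE'}\Lambda_{(C,A)}(\delta)$; next I would commute the remaining covariant action past the new contravariant action, using the identity $z^{\BE'}x_{\BE'} = x_{\BE'}z^{\BE'}$ recorded in \cref{sec:n-exangulated-categories}; and finally I would apply that $\beth'$ is $n$-exangulated, in the form $(\beth'_A)_{\BE'}\Lambda_{(C,A)}(\delta) = (\beth'_C)^{\BE'}\Theta_{(C,A)}(\delta)$. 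Recollecting the two contravariant factors as $(\beth'_C \beth_C)^{\BE'}$ produces precisely $(\beth'_C \beth_C)^{\BE'}\Theta_{(C,A)}(\delta)$, which is the required right-hand side, so $\beth' \circ_{v} \beth$ satisfies \eqref{eqn:n-exangulated-natural-transformation-property}.

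For part \ref{item:horizontal-composition-is-n-exangulated}, I would use \eqref{eqn:horizontal-comp-is-vertical-comp-of-whiskers}, which identifies $\daleth \circ_{h} \beth$ with the vertical composite $\daleth_{\SG} \circ_{v} \SL\beth$. The whiskerings $\SL\beth \colon (\SL,\Phi)\circ(\SF,\Gamma) \Rightarrow (\SL,\Phi)\circ(\SG,\Lambda)$ and $\daleth_{\SG} \colon (\SL,\Phi)\circ(\SG,\Lambda) \Rightarrow (\SM,\Psi)\circ(\SG,\Lambda)$ are $n$-exangulated by \cref{prop:whiskerings-are-n-exangulated}, and the codomain of the former agrees with the domain of the latter, so the vertical composite is defined. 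Applying part \ref{item:vertical-composition-is-n-exangulated} to this composite shows at once that $\daleth \circ_{h} \beth$ is $n$-exangulated.

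I expect the only delicate point to be the variance bookkeeping in part \ref{item:vertical-composition-is-n-exangulated}: one must split $(-)_{\BE'}$ covariantly but $(-)^{\BE'}$ contravariantly, and the middle step hinges on a covariant action commuting with a contravariant one. Once this is handled the computation is routine, and part \ref{item:horizontal-composition-is-n-exangulated} becomes purely formal, resting entirely on \cref{rem:whiskering-combined-with-vertical-equivalent-to-horizontal}, \cref{prop:whiskerings-are-n-exangulated}, and part \ref{item:vertical-composition-is-n-exangulated}.
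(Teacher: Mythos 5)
Your proposal is correct and takes essentially the same route as the paper: part (i) is verified by the same chain of equalities (split the covariant and contravariant actions functorially, apply the $n$-exangulated property of $\beth$, commute the covariant action past the contravariant one via $\tensor[]{z}{^{\BE'}}\tensor[]{x}{_{\BE'}}=\tensor[]{x}{_{\BE'}}\tensor[]{z}{^{\BE'}}$, then apply the property of $\beth'$), and part (ii) is deduced exactly as in the paper from \eqref{eqn:horizontal-comp-is-vertical-comp-of-whiskers}, \cref{prop:whiskerings-are-n-exangulated} and part (i). Your explicit attention to the variance bookkeeping (the reversed order in splitting $(\tensor*[]{\beth}{^{\prime}_{C}}\tensor[]{\beth}{_{C}}\tensor[]{)}{^{\BE'}}$ and the commuting step) makes fully precise what the paper's displayed computation leaves implicit between its second and third lines.
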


\begin{proof}
\ref{item:vertical-composition-is-n-exangulated}\; 
Let \(\delta\inn\BE(C,A)\) be arbitrary. 
We have 
\begin{align*}
((\beth'\circ_{v}\beth\tensor[]{)}{_{A}}\tensor[]{)}{_{\BE'}}\tensor*[]{\Gamma}{_{(C,A)}}(\delta)
        &=(\tensor*[]{\beth}{^{\prime}_{A}}\tensor*[]{)}{_{\BE'}}(\tensor*[]{\beth}{_{A}}
        \tensor*[]{)}{_{\BE'}}
        \tensor*[]{\Gamma}{_{(C,A)}}(\delta)
            &&\text{using the definition of \(\circ_{v}\)}\\
        &=(\tensor*[]{\beth}{^{\prime}_{A}}\tensor*[]{)}{_{\BE'}}(\tensor*[]{\beth}{_{C}}\tensor[]{)}{^{\BE'}}
        \tensor*[]{\Lambda}{_{(C,A)}}(\delta)
            &&\text{as \(\beth\) is \(n\)-exangulated}\\
        &=(\tensor*[]{\beth}{_{C}}\tensor[]{)}{^{\BE'}}(\tensor*[]{\beth}{^{\prime}_{C}}\tensor[]{)}{^{\BE'}}
        \tensor*[]{\Theta}{_{(C,A)}}(\delta)
            &&\text{as \(\beth'\) is \(n\)-exangulated}\\
        &=
        ((\beth'\circ_{v}\beth\tensor[]{)}{_{C}}\tensor[]{)}{^{\BE'}}
        \tensor*[]{\Theta}{_{(C,A)}}(\delta)
            &&\text{using the definition of \(\circ_{v}\)},
\end{align*}
which verifies that \(\beth'\circ_{v}\beth\) is \(n\)-exangulated.

\ref{item:horizontal-composition-is-n-exangulated}\; 
This follows from \eqref{eqn:horizontal-comp-is-vertical-comp-of-whiskers}, \cref{prop:whiskerings-are-n-exangulated} and part \ref{item:vertical-composition-is-n-exangulated} above. 
\end{proof}


\subsection{\texorpdfstring{\(n\)}{n}-exangulated adjoints and equivalences}
\label{subsec:n-exangulated-adjoints}

In this subsection we discuss adjunctions and equivalences between \(n\)-exangulated categories. 
An important property of the functor \(\updave \colon \Exang{n} \to \Exactcat\), which will be defined in 
\cref{def:updave}, 
is that it preserves adjoint pairs and equivalences; see \cref{cor:n-exangulated-adjoint-pair-gives-adjoint-pair-of-functors-on-Ext-categories}.

Recall from Setup~\ref{setup:section4} that we consider \(n\)-exangulated functors 
\((\SF,\Gamma) \colon \begin{tikzcd}[cramped, column sep=0.3cm](\CC,\BE,\fs) \arrow{r} & (\CC',\BE',\fs')\end{tikzcd}\) 
and \((\SA,\NT) \colon (\CC',\BE',\fs') \to (\CC,\BE,\fs)\). 
In the case \(n=1\), part (ii) in \cref{def:n-exangulated-adjoints-and-equivalences} recovers the notion of an equivalence of extriangulated categories as defined in \cite[Prop.~2.13]{NakaokaOgawaSakai-localization-of-extriangulated-categories}. 
In the following we use the Hebrew letters \(\tsadi\) (tsadi) and \(\mem\) (mem).

\begin{defn}
\label{def:n-exangulated-adjoints-and-equivalences}
\begin{enumerate}[label=\textup{(\roman*)}]
    \item\label{item:n-exangulated-adjunction} 
    We call \(((\SF,\Gamma),(\SA,\NT))\) an \emph{\(n\)-exangulated adjoint pair} if \((\SF,\SA)\) is an adjoint pair for which the unit \(\tsadi\) and counit \(\mem\) give \(n\)-exangulated natural transformations \(\tsadi\colon(\idfunc{\CC}, \iden{\BE}) \Rightarrow (\SA,\NT) \circ (\SF,\Gamma)\) and \(\mem\colon(\SF,\Gamma)\circ(\SA,\NT)\Rightarrow(\idfunc{\CC'}, \iden{\BE'})\).

    \item\label{item:n-exangulated-equivalence} 
    We call \((\SF,\Gamma)\) an 
    \emph{\(n\)-exangulated equivalence} if there is an \(n\)-exangulated adjoint pair \(((\SF,\Gamma),(\SA,\NT))\) whose unit and counit give \(n\)-exangulated natural isomorphisms.

\end{enumerate}
\end{defn}

Recall that if \((\SF,\SA)\) is an adjoint pair with unit \(\tsadi\colon\idfunc{\CC}\Rightarrow\SA\SF\) and counit \(\mem\colon\SF\SA\Rightarrow\idfunc{\CC'}\), 
then 
one has the \emph{triangle identities} (or \emph{counit-unit equations}) 
\begin{equation}
\label{eqn:triangle-identities}
\tensor[]{\mem}{_{\SF X}}(\SF\tensor[]{\tsadi}{_{X}})
    = \iden{\SF X}
\hspace{1cm}
\text{and}
\hspace{1cm}
(\SA\tensor[]{\mem}{_{Y}})\tensor[]{\tsadi}{_{\SA Y}}
    = \iden{\SA Y}
\end{equation}
for all \(X\inn\CC\) and \(Y\inn\CC'\); 
see e.g.\ 
\cite[Thm.~IV.1.1(ii)]{MacLane-categories-for-the-working-mathematician}. 
In terms of vertical and horizontal compositions of (\(n\)-exangulated) natural transformations, 
the equations in \eqref{eqn:triangle-identities} 
give 
\begin{equation}
\label{eqn:triangle-identities-as-whiskerings}
(\mem \circ_{h} \iden{(\SF,\Gamma)}) 
    \circ_{v}
(\iden{(\SF,\Gamma)} \circ_{h} \tsadi)
     = \iden{(\SF,\Gamma)}
\hspace{0.5cm}
\text{and}
\hspace{0.5cm}
(\iden{(\SA,\Xi)} \circ_{h} \mem) 
    \circ_{v}
(\tsadi \circ_{h} \iden{(\SA,\Xi)})
     = \iden{(\SA,\Xi)}.
\end{equation} 
If 
\((\SF,\SA)\) is an adjoint equivalence, 
we also have 
\(
\tensor*[]{\mem}{_{\SF X}^{-1}} = \SF\tensor[]{\tsadi}{_{X}}
\) 
in \(\CC'\) 
and 
\(
\tensor*[]{\tsadi}{_{\SA Y}^{-1}} = \SA\tensor[]{\mem}{_{Y}}
\)
in \(\CC\).

We use the following lemma to characterise \(n\)-exangulated equivalences. Notice the similarity between the equations in the statement below and the triangle identities above.

\begin{lem}
\label{lemma:formulas-for-adjunctions-with-n-exangulated-counits-and-units}
Let \(((\SF,\Gamma),(\SA,\NT))\) be an \(n\)-exangulated adjoint pair with unit \(\tsadi\) and counit \(\mem\). 
Then for all \(A,C\inn\CC\) and \(B,D\inn\CC'\), we have:
\begin{enumerate}[label=\textup{(\roman*)}]
\item\label{item:formula-for-adjunction-E-prime}
    \((\tensor[]{\mem}{_{\SF A}}\tensor[]{)}{_{\BE'}}
    (\tensor[]{\SF\tsadi}{_{C}}\tensor[]{)}{^{\BE'}}
    (\tensor[]{\Gamma}{_{\SA\times\SA}}\NT\tensor[]{)}{_{(\SF C,\SF A)}}
        = \iden{\BE'(\SF C,\SF A)}\); and
\item\label{item:formula-for-adjunction-E}
    \((\tensor[]{\SA\mem}{_{B}}\tensor[]{)}{_{\BE}}
    (\tensor[]{\tsadi}{_{\SA D}}\tensor[]{)}{^{\BE}}
    (\tensor[]{\NT}{_{\SF\times\SF}}\Gamma\tensor[]{)}{_{(\SA D,\SA B)}}
        = \iden{\BE(\SA D,\SA B)}\).
\end{enumerate}
\end{lem}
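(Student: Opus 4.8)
The plan is to verify that each composite, read as an operator on the relevant group of extensions, sends every extension to itself; it then suffices to check this on an arbitrary element. The two statements are formally dual, so I would prove \ref{item:formula-for-adjunction-E-prime} in full and obtain \ref{item:formula-for-adjunction-E} by the same argument with the roles of the unit \(\tsadi\) and counit \(\mem\), and of \(\SF\) and \(\SA\), interchanged. Throughout I would use the bifunctoriality of \(\BE'\) (and \(\BE\)): that covariant actions \((-)_{\BE'}\) and contravariant actions \((-)^{\BE'}\) on distinct variables commute, as recorded in \cref{sec:n-exangulated-categories}, and that each such action is functorial in its morphism.

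For \ref{item:formula-for-adjunction-E-prime}, fix \(\delta'\inn\BE'(\SF C,\SF A)\) and evaluate the left-hand side at \(\delta'\). First I would unwind the whiskering: since \((\SF,\Gamma)\circ(\SA,\NT)=(\SF\SA,\tensor[]{\Gamma}{_{\SA\times\SA}}\NT)\), the component satisfies \((\tensor[]{\Gamma}{_{\SA\times\SA}}\NT\tensor[]{)}{_{(\SF C,\SF A)}}(\delta')=\tensor[]{\Gamma}{_{(\SA\SF C,\SA\SF A)}}(\tensor[]{\NT}{_{(\SF C,\SF A)}}(\delta'))\). Because \((\tensor[]{\mem}{_{\SF A}}\tensor[]{)}{_{\BE'}}\) and \((\tensor[]{\SF\tsadi}{_{C}}\tensor[]{)}{^{\BE'}}\) act on different variables of \(\BE'\), they commute, so the value in question equals \((\tensor[]{\SF\tsadi}{_{C}}\tensor[]{)}{^{\BE'}}[(\tensor[]{\mem}{_{\SF A}}\tensor[]{)}{_{\BE'}}\tensor[]{\Gamma}{_{(\SA\SF C,\SA\SF A)}}(\tensor[]{\NT}{_{(\SF C,\SF A)}}(\delta'))]\). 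The bracketed term is exactly the left-hand side of the defining equation \eqref{eqn:n-exangulated-natural-transformation-property} for \(\mem\colon(\SF\SA,\tensor[]{\Gamma}{_{\SA\times\SA}}\NT)\Rightarrow(\idfunc{\CC'},\iden{\BE'})\) evaluated at \(\delta'\) (with objects \(\SF C,\SF A\inn\CC'\)), so applying that equation rewrites it as \((\tensor[]{\mem}{_{\SF C}}\tensor[]{)}{^{\BE'}}\delta'\). Hence the value becomes \((\tensor[]{\SF\tsadi}{_{C}}\tensor[]{)}{^{\BE'}}(\tensor[]{\mem}{_{\SF C}}\tensor[]{)}{^{\BE'}}\delta'=(\tensor[]{\mem}{_{\SF C}}\circ\SF\tensor[]{\tsadi}{_{C}}\tensor[]{)}{^{\BE'}}\delta'\) by functoriality of \(\BE'(-,\SF A)\), and the triangle identity \(\tensor[]{\mem}{_{\SF C}}(\SF\tensor[]{\tsadi}{_{C}})=\iden{\SF C}\) from \eqref{eqn:triangle-identities} collapses this to \((\iden{\SF C}\tensor[]{)}{^{\BE'}}\delta'=\delta'\), proving \ref{item:formula-for-adjunction-E-prime}.

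For \ref{item:formula-for-adjunction-E} I would dualise: fix \(\eta\inn\BE(\SA D,\SA B)\), use \((\SA,\NT)\circ(\SF,\Gamma)=(\SA\SF,\tensor[]{\NT}{_{\SF\times\SF}}\Gamma)\) to write \((\tensor[]{\NT}{_{\SF\times\SF}}\Gamma\tensor[]{)}{_{(\SA D,\SA B)}}(\eta)=\tensor[]{\NT}{_{(\SF\SA D,\SF\SA B)}}(\tensor[]{\Gamma}{_{(\SA D,\SA B)}}(\eta))\), apply the defining equation \eqref{eqn:n-exangulated-natural-transformation-property} for the \(n\)-exangulated unit \(\tsadi\colon(\idfunc{\CC},\iden{\BE})\Rightarrow(\SA\SF,\tensor[]{\NT}{_{\SF\times\SF}}\Gamma)\) at \((\SA D,\SA B)\) to replace \((\tensor[]{\tsadi}{_{\SA D}}\tensor[]{)}{^{\BE}}\tensor[]{\NT}{_{(\SF\SA D,\SF\SA B)}}(\tensor[]{\Gamma}{_{(\SA D,\SA B)}}(\eta))\) by \((\tensor[]{\tsadi}{_{\SA B}}\tensor[]{)}{_{\BE}}\eta\), and then use functoriality of \(\BE(\SA D,-)\) together with the other triangle identity \((\SA\tensor[]{\mem}{_{B}})\tensor[]{\tsadi}{_{\SA B}}=\iden{\SA B}\) to reach \(\eta\). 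I expect the only real care needed to be in the bookkeeping: correctly matching the components of the whiskered transformations \(\tensor[]{\Gamma}{_{\SA\times\SA}}\NT\) and \(\tensor[]{\NT}{_{\SF\times\SF}}\Gamma\) against the objects at which \eqref{eqn:n-exangulated-natural-transformation-property} is applied, and keeping each covariant \((-)_{\BE'}\) and contravariant \((-)^{\BE'}\) on the correct variable throughout. Once the indices are tracked, each identity reduces to a single application of the relevant \(n\)-exangulated naturality equation followed by one triangle identity, so no genuinely hard step remains.
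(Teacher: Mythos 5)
Your proposal is correct and follows essentially the same route as the paper's proof: evaluate at an arbitrary extension, apply the defining equation \eqref{eqn:n-exangulated-natural-transformation-property} for the \(n\)-exangulated counit (resp.\ unit), and then collapse via functoriality of the \(\BE'\)- (resp.\ \(\BE\)-) action and the relevant triangle identity from \eqref{eqn:triangle-identities}. The only difference is presentational: you spell out the commutation of the covariant and contravariant actions and the unwinding of the whiskering, which the paper leaves implicit.
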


\begin{proof}
We just show \ref{item:formula-for-adjunction-E-prime}, 
as the proof of \ref{item:formula-for-adjunction-E}
is similar. 
Let \(\delta' \inn\BE'(\SF C, \SF A)\) be arbitrary. 
Since \(\mem\colon(\SF\SA,\tensor[]{\Gamma}{_{\SA\times\SA}}\NT)\Rightarrow(\idfunc{\CC'}, \iden{\BE'})\) is an \(n\)-exangulated natural transformation, we get 
\[
(\tensor[]{\SF\tsadi}{_{C}}\tensor[]{)}{^{\BE'}}(\tensor[]{\mem}{_{\SF A}}\tensor[]{)}{_{\BE'}}(\tensor[]{\Gamma}{_{\SA\times\SA}}\NT\tensor[]{)}{_{(\SF C,\SF A)}}(\delta')
        = (\tensor[]{\SF\tsadi}{_{C}}\tensor[]{)}{^{\BE'}}
        (\tensor[]{\mem}{_{\SF C}}\tensor[]{)}{^{\BE'}}(\delta')
        = \delta', 
\] 
where the first equality follows from \eqref{eqn:n-exangulated-natural-transformation-property} and the second from \eqref{eqn:triangle-identities}.
\end{proof}
\cref{prop:characterisation-of-n-exangulated-equivalence} below is a higher analogue of \cite[Prop.~2.13]{NakaokaOgawaSakai-localization-of-extriangulated-categories}, giving a characterisation of when an \(n\)-exangulated functor is an \(n\)-exangulated equivalence. 
One direction in the proof is provided in \cite{NakaokaOgawaSakai-localization-of-extriangulated-categories} for the extriangulated case and easily translates to the \(n\)-exangulated setting, so we omit this here. 
We provide a proof for the other implication. 
The following statement has also appeared independently; see \cite[Prop.\ 2.14]{HeHeZhou-localization-of-n-exangulated-categories}.

\begin{prop}
\label{prop:characterisation-of-n-exangulated-equivalence}
The pair \((\SF,\Gamma)\) is an \(n\)-exangulated equivalence if and only if \(\SF\) is an equivalence of categories and \(\Gamma\) is a natural isomorphism.
\end{prop}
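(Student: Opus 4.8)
The plan is to prove only the forward implication, since the converse---assembling an \(n\)-exangulated equivalence out of an additive equivalence \(\SF\) together with a natural isomorphism \(\Gamma\)---is precisely the \(n\)-exangulated analogue of \cite[Prop.~2.13]{NakaokaOgawaSakai-localization-of-extriangulated-categories} and carries over without change. So I would assume \((\SF,\Gamma)\) is an \(n\)-exangulated equivalence and unpack \cref{def:n-exangulated-adjoints-and-equivalences}\ref{item:n-exangulated-equivalence}: there is an \(n\)-exangulated adjoint pair \(((\SF,\Gamma),(\SA,\NT))\) whose unit \(\tsadi\) and counit \(\mem\) are \(n\)-exangulated natural isomorphisms, so that by \cref{notn:vertical-composition-and-horizontal-composition-of-2-cells} every component \(\tsadi_{X}\) and \(\mem_{Y}\) is invertible. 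The first assertion is then immediate: the underlying adjunction \((\SF,\SA)\) has invertible unit and counit, which is exactly an adjoint equivalence, so \(\SF\) is an equivalence of categories (see \cite{MacLane-categories-for-the-working-mathematician}).

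The substance lies in proving that \(\Gamma\) is a natural isomorphism, and here I would feed the invertibility of \(\tsadi\) and \(\mem\) into the two identities of \cref{lemma:formulas-for-adjunctions-with-n-exangulated-counits-and-units}. Since \(\BE\) and \(\BE'\) are functorial in each variable, the induced maps \((\mem_{\SF A})_{\BE'}\), \((\SF\tsadi_{C})^{\BE'}\), \((\SA\mem_{B})_{\BE}\) and \((\tsadi_{\SA D})^{\BE}\) are all isomorphisms of abelian groups. Decomposing the composite transformation as \((\tensor[]{\Gamma}{_{\SA\times\SA}}\NT)_{(\SF C,\SF A)} = \Gamma_{(\SA\SF C,\SA\SF A)}\circ\NT_{(\SF C,\SF A)}\), formula \cref{lemma:formulas-for-adjunctions-with-n-exangulated-counits-and-units}\ref{item:formula-for-adjunction-E-prime} exhibits this composite as an isomorphism, whence \(\Gamma_{(\SA\SF C,\SA\SF A)}\) is a split epimorphism. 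Setting \(D=\SF C\) and \(B=\SF A\) in \cref{lemma:formulas-for-adjunctions-with-n-exangulated-counits-and-units}\ref{item:formula-for-adjunction-E} and using \((\tensor[]{\NT}{_{\SF\times\SF}}\Gamma)_{(\SA\SF C,\SA\SF A)} = \NT_{(\SF\SA\SF C,\SF\SA\SF A)}\circ\Gamma_{(\SA\SF C,\SA\SF A)}\), the \emph{same} map \(\Gamma_{(\SA\SF C,\SA\SF A)}\) now appears as the right-hand factor of an isomorphism and is therefore a split monomorphism. A morphism that is simultaneously split epic and split monic is invertible, so \(\Gamma_{(\SA\SF C,\SA\SF A)}\) is an isomorphism for all \(A,C\inn\CC\).

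It then remains to pass from objects of the form \(\SA\SF C\) to arbitrary ones, which I would do by naturality of \(\Gamma\). For fixed \(A,C\inn\CC\) the pair \((\tsadi_{C}^{-1},\tsadi_{A})\) is an isomorphism \((C,A)\to(\SA\SF C,\SA\SF A)\) in \(\CC^{\op}\times\CC\), and the resulting naturality square for \(\Gamma\) has both vertical maps \(\BE(\tsadi_{C}^{-1},\tsadi_{A})\) and \(\BE'(\SF\tsadi_{C}^{-1},\SF\tsadi_{A})\) invertible, being images under the bifunctors of isomorphisms. Hence \(\Gamma_{(C,A)}\) is an isomorphism if and only if \(\Gamma_{(\SA\SF C,\SA\SF A)}\) is, and the previous step supplies the latter. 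This yields that \(\Gamma_{(C,A)}\) is an isomorphism for every \(A,C\inn\CC\), so \(\Gamma\) is a natural isomorphism and the proof concludes.

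I expect the main obstacle to be organisational rather than conceptual: correctly tracking the variances and object indices in the two composites \(\tensor[]{\Gamma}{_{\SA\times\SA}}\NT\) and \(\tensor[]{\NT}{_{\SF\times\SF}}\Gamma\), and recognising that the two triangle-type identities of \cref{lemma:formulas-for-adjunctions-with-n-exangulated-counits-and-units} pin down the single morphism \(\Gamma_{(\SA\SF C,\SA\SF A)}\) as at once split epic and split monic. Once that observation is secured, everything else is a formal consequence of the functoriality of \(\BE\) and \(\BE'\) and of the characterisation of adjoint equivalences.
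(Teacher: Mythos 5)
Your proposal is correct, and for the substantive half of the proof it takes a genuinely different route from the paper. The paper also only proves the forward implication (citing \cite{NakaokaOgawaSakai-localization-of-extriangulated-categories} for the converse) and gets the equivalence of \(\SF\) the same way you do, but for the invertibility of \(\Gamma\) it writes down an explicit two-sided inverse of \(\Gamma_{(C,A)}\) at an \emph{arbitrary} pair, namely \((\tsadi_{A}^{-1}\tensor[]{)}{_{\BE}}(\tensor[]{\tsadi}{_{C}}\tensor[]{)}{^{\BE}}\tensor[]{\NT}{_{(\SF C,\SF A)}}\): the left-inverse identity comes from applying the defining equation \eqref{eqn:n-exangulated-natural-transformation-property} of the \(n\)-exangulated natural transformation \(\tsadi\) directly, and the right-inverse identity from the naturality of \(\Gamma\), the triangle identities \eqref{eqn:triangle-identities}, and \cref{lemma:formulas-for-adjunctions-with-n-exangulated-counits-and-units}\ref{item:formula-for-adjunction-E-prime}. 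You instead treat \cref{lemma:formulas-for-adjunctions-with-n-exangulated-counits-and-units} as a black box: part \ref{item:formula-for-adjunction-E-prime} exhibits \(\Gamma_{(\SA\SF C,\SA\SF A)}\) as a split epimorphism, part \ref{item:formula-for-adjunction-E} specialised at \(B=\SF A\), \(D=\SF C\) exhibits the \emph{same} homomorphism as a split monomorphism, hence it is invertible, and you then transport invertibility to an arbitrary pair \((C,A)\) through the naturality square of \(\Gamma\) along the isomorphism \((\tsadi_{C}^{-1},\tsadi_{A})\) in \(\CC^{\op}\times\CC\); I checked the variances and indices and they are all consistent. What each approach buys: the paper's computation is shorter, needs only part \ref{item:formula-for-adjunction-E-prime} of the lemma, and produces an explicit formula for \(\Gamma_{(C,A)}^{-1}\); yours never re-invokes \eqref{eqn:n-exangulated-natural-transformation-property} after the lemma, relying only on the invertibility of the components of \(\tsadi\) and \(\mem\) and the functoriality of \(\BE\) and \(\BE'\), at the cost of needing both parts of the lemma (the paper proves \ref{item:formula-for-adjunction-E} only by duality) and the extra transfer step from objects of the form \(\SA\SF C\), \(\SA\SF A\) to arbitrary ones.
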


\begin{proof}
\((\Rightarrow)\)\; 
Suppose that we are given an \(n\)-exangulated adjoint pair 
\(((\SF,\Gamma),(\SA,\NT))\) with corresponding \(n\)-exangulated natural isomorphisms
\(\tsadi\colon(\idfunc{\CC}, \iden{\BE}) \Rightarrow (\SA,\NT) \circ (\SF,\Gamma)\) 
and 
\(\mem\colon(\SF,\Gamma)\circ(\SA,\NT)\Rightarrow(\idfunc{\CC'}, \iden{\BE'})\) 
defined by the unit and counit, respectively.
It follows from classical theory that \(\SF\) is an equivalence. Thus, it remains to show that for all \(A,C\inn\CC\), the induced homomorphism
\(
\tensor[]{\Gamma}{_{(C,A)}}\colon \BE(C,A) \to \BE'(\SF C, \SF A)
\) is invertible. 
We claim that 
the composition
\[
(\tsadi_{A}^{-1}\tensor[]{)}{_{\BE}}
(\tensor[]{\tsadi}{_{C}}\tensor[]{)}{^{\BE}}
\tensor[]{\NT}{_{(\SF C,\SF A)}}\colon \BE'(\SF C,\SF A) \to \BE(C, A)
\] 
is a two-sided inverse of 
\(
\tensor[]{\Gamma}{_{(C,A)}}
\). 
First, for each \(\delta\inn\BE(C,A)\), notice that
\[
    (\tsadi_{A}^{-1}\tensor[]{)}{_{\BE}}
        (\tensor[]{\tsadi}{_{C}}\tensor[]{)}{^{\BE}}
        \tensor[]{\NT}{_{(\SF C,\SF A)}}\tensor[]{\Gamma}{_{(C,A)}}(\delta)
    = (\tsadi_{A}^{-1}\tensor[]{)}{_{\BE}}
        (\tensor[]{\tsadi}{_{A}}\tensor[]{)}{_{\BE}}
        \iden{\BE}(\delta)
    = \delta,
\]
where the first equality follows from 
\eqref{eqn:n-exangulated-natural-transformation-property} 
for \(\tsadi\). 
It remains to check that 
\(
(\tsadi_{A}^{-1}\tensor[]{)}{_{\BE}}
(\tensor[]{\tsadi}{_{C}}\tensor[]{)}{^{\BE}}
\tensor[]{\NT}{_{(C,A)}}
\)
is a right inverse of 
\(
\tensor[]{\Gamma}{_{(C,A)}}
\). 
For 
\(\delta'\inn\BE'(\SF C,\SF A)\), we have that 
\begin{align*}
    \tensor[]{\Gamma}{_{(C,A)}} 
        (\tsadi_{A}^{-1}\tensor[]{)}{_{\BE}}
        (\tensor[]{\tsadi}{_{C}}\tensor[]{)}{^{\BE}}
        \tensor[]{\NT}{_{(\SF C,\SF A)}}(\delta') 
    &= (\SF\tsadi_{A}^{-1}\tensor[]{)}{_{\BE'}}
    (\SF\tensor[]{\tsadi}{_{C}}\tensor[]{)}{^{\BE'}}
    \tensor[]{\Gamma}{_{(\SA \SF C, \SA \SF A)}}\tensor[]{\NT}{_{(\SF C,\SF A)}}(\delta') \\
    &= (\tensor[]{\mem}{_{\SF A}}\tensor[]{)}{_{\BE'}}
    (\SF\tensor[]{\tsadi}{_{C}}\tensor[]{)}{^{\BE'}}
    \tensor[]{\Gamma}{_{(\SA \SF C, \SA \SF A)}}\tensor[]{\NT}{_{(\SF C,\SF A)}}(\delta'),
\end{align*}
where the first equality is by the naturality of \(\Gamma\) and the second follows from \eqref{eqn:triangle-identities}. 
This last term 
is equal to \(\delta'\) by \cref{lemma:formulas-for-adjunctions-with-n-exangulated-counits-and-units}\ref{item:formula-for-adjunction-E-prime}, as required.
\end{proof}


\subsection{A \texorpdfstring{\(2\)}{2}-categorical viewpoint}
\label{subsec:2-categorical-viewpoint}

We start this subsection by using what we have shown so far to prove that \(\exang{n}\) is a \(2\)-category. More generally, we establish a \(\Hom\)-category for each pair of \(n\)-exangulated categories, as explained in the proposition below.

\begin{prop}
\label{prop:hom-category-of-n-exangulated-categories}
For each pair \((\CC,\BE,\fs)\) and \((\CC',\BE',\fs')\) of \(n\)-exangulated categories, there is a category 
\(\CN\deff\Exang{n}((\CC,\BE,\fs),(\CC',\BE',\fs'))\) 
whose objects are \(n\)-exangulated functors \((\CC,\BE,\fs)\to(\CC',\BE',\fs')\) and whose morphisms are \(n\)-exangulated natural transformations.
\end{prop}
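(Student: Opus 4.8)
The plan is to equip $\CN$ with vertical composition $\circ_v$ as its composition law and with the identities $\iden{(\SF,\Gamma)}$ from \cref{notn:vertical-composition-and-horizontal-composition-of-2-cells}\ref{item:identity-n-exangulated-transformation} as its identity morphisms. Nearly all the substantive work has already been carried out in \cref{subsec:composing-n-exangulated-natural-transformations}, so the argument is primarily a matter of assembling the relevant results and checking the category axioms. A morphism of $\CN$ is to be understood as an $n$-exangulated natural transformation together with its designated source and target $n$-exangulated functors, so that domains and codomains are well-defined.

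First I would verify that $\circ_v$ gives a well-defined composition. Given morphisms $\beth\colon(\SF,\Gamma)\Rightarrow(\SG,\Lambda)$ and $\beth'\colon(\SG,\Lambda)\Rightarrow(\SH,\Theta)$ in $\CN$, the vertical composite $\beth'\circ_v\beth\colon(\SF,\Gamma)\Rightarrow(\SH,\Theta)$ is again $n$-exangulated by \cref{prop:compositions-are-n-exangulated}\ref{item:vertical-composition-is-n-exangulated}, and hence lies in $\CN$. This yields a composition law $\CN((\SG,\Lambda),(\SH,\Theta))\times\CN((\SF,\Gamma),(\SG,\Lambda))\to\CN((\SF,\Gamma),(\SH,\Theta))$.

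Next I would observe that each object $(\SF,\Gamma)$ of $\CN$ admits the identity $\iden{(\SF,\Gamma)}$, which is an $n$-exangulated natural transformation by \cref{prop:identity-is-n-exangulated-associativity-unitality-and-middle-four-exchange}\ref{item:identity-n-exangulated-transformation-is-n-exangulated} and so is a morphism of $\CN$. The remaining axioms---associativity of $\circ_v$ and the fact that these identities are left and right units---are exactly the content of \cref{prop:identity-is-n-exangulated-associativity-unitality-and-middle-four-exchange}\ref{item:assoc-unital-n-exangulated-nat-trans}, which records that $\circ_v$ is associative and unital on $n$-exangulated natural transformations. Together these checks establish that $\CN$ satisfies the category axioms.

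The point deserving the most care is not an obstacle in the verification itself but the set-theoretic status of $\CN$: without a smallness hypothesis on $(\CC,\BE,\fs)$, the collection of $n$-exangulated natural transformations between two fixed $n$-exangulated functors need not be a set, so $\CN$ should be read as a possibly large category. This is precisely the subtlety flagged in \cref{rem:not-2-categories}; the computations above require no smallness, and upon restricting to small $n$-exangulated categories each such hom-collection becomes a set, which is what ultimately underlies the $2$-category $\exang{n}$ of \cref{cor:2-cats}.
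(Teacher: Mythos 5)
Your proposal is correct and follows essentially the same route as the paper: the paper's proof likewise takes vertical composition as the composition law, invoking \cref{prop:compositions-are-n-exangulated}\ref{item:vertical-composition-is-n-exangulated} for well-definedness and \cref{prop:identity-is-n-exangulated-associativity-unitality-and-middle-four-exchange}\ref{item:identity-n-exangulated-transformation-is-n-exangulated} and \ref{item:assoc-unital-n-exangulated-nat-trans} for identities, associativity and unitality. Your additional remark on the set-theoretic status of $\CN$ matches the discussion the paper defers to \cref{rem:not-2-categories}.
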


\begin{proof}
Define composition of morphisms in \(\CN\) to be vertical composition \(\circ_{v}\) of natural transformations, which is well-defined by \cref{prop:compositions-are-n-exangulated}\ref{item:vertical-composition-is-n-exangulated}. 
\cref{prop:identity-is-n-exangulated-associativity-unitality-and-middle-four-exchange}\ref{item:identity-n-exangulated-transformation-is-n-exangulated} and \ref{item:assoc-unital-n-exangulated-nat-trans} imply that \(\CN\) is a category.
\end{proof}

\begin{rem}
\label{rem:not-2-categories}
Note that in this article, and in particular in \cref{prop:hom-category-of-n-exangulated-categories}, 
the term `category' does not require \emph{smallness}, 
i.e.\ the collections of objects and morphisms associated to a category are not assumed to form sets. 
In spite of this, we have so far usually
considered additive categories. 
These are necessarily \emph{locally small}, which means that the collection of morphisms between any two objects is a set, 
since a group is a set. 

Due to a set-theoretic issue, care must be taken when referring to the established notion of a \emph{\(2\)-category}. In order to ensure that the collection of \(2\)-cells between a pair of fixed \(1\)-cells forms a set instead of a proper class, it is common from a \(2\)-categorical viewpoint to only consider small categories; see e.g.\ \cite[p.~43]{MacLane-categories-for-the-working-mathematician}. 
Consequently, we do not refer to the categories \(\Exang{n}\) and \(\Exactcat\) as \(2\)-categories, since collections of \(2\)-cells between pairs of \(1\)-cells need not form sets.
Yet, it is still natural to adopt a \(2\)-categorical perspective on these categories.
Nevertheless, for accuracy and in the interest of not abusing established terminology, we avoid the terms `\(2\)-category' and `\(2\)-functor' in \cref{prop:hom-category-of-n-exangulated-categories} and \cref{thm:2-functor}.
\end{rem}
So far, we have used the notation \(\Exang{n}\) (resp.\ \(\Exactcat\)) for the \(1\)-category of all \(n\)-exangulated (resp.\ exact) categories. 
Similarly, we have used \(\exang{n}\) and \(\exactcat\) to denote the 
subcategories obtained by restricting to small categories. 
In order to formally place \(\exang{n}\) and \(\exactcat\) in a \(2\)-categorical framework, we make our terminology more precise by indicating below the \(0\)-cells, \(1\)-cells and \(2\)-cells of 
these structures.

\begin{notn}
\label{notn:i-cells}
We write \(\Exang{n}\) and \(\Exactcat\) for the collections of \(0\)-cells, \(1\)-cells and \(2\)-cells described in the table below.
\bigskip
\begin{center}
\begin{tabular}{|c|c|c|}\hline

            & \(\Exang{n}\)                 &   \(\Exactcat\)     \\\hline
\(0\)-cells & \(n\)-exangulated categories  &   exact categories\\\hline
\(1\)-cells & \(n\)-exangulated functors    & exact functors\\\hline
\(2\)-cells & \(n\)-exangulated natural transformations     &  natural transformations\\\hline
\end{tabular}
\bigskip
\end{center}
The \(i\)-cells above also induce \(i\)-cells for \(\exang{n}\) and  \(\exactcat\), where the only difference is that for \(0\)-cells we restrict to small categories.
\end{notn}

It is well-known that the composition of two exact functors is an exact functor, and hence \(\exactcat\) is a \(2\)-category; 
see e.g.\ \cite[1.4(a)]{Lack-a-2-categories-companion}. 
From the theory developed in Subsection~\ref{subsec:composing-n-exangulated-natural-transformations} and in this subsection so far, 
one can deduce that \(\Exang{n}\) has the characteristics of a \(2\)-category. 
When restricting to small categories, the set-theoretic issue described in \cref{rem:not-2-categories} is avoided and we obtain the following immediate corollary.

\begin{cor}
\label{cor:2-cats}
The category \(\exang{n}\) is a \(2\)-category.
\end{cor}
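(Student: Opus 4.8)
The plan is to verify directly that $\exang{n}$ satisfies the axioms of a $2$-category in the sense of \cite[Sec.~XII.3]{MacLane-categories-for-the-working-mathematician}, i.e.\ that it is a category enriched over the category of small categories. Essentially every required ingredient has already been assembled in this section, so the proof reduces to collecting these facts and checking the one genuinely new point: that restricting the $0$-cells to \emph{small} categories forces each $\Hom$-category to be small, thereby circumventing the set-theoretic obstruction of \cref{rem:not-2-categories}. First I would record the data from \cref{notn:i-cells}: the $0$-cells are small $n$-exangulated categories, the $1$-cells are $n$-exangulated functors and the $2$-cells are $n$-exangulated natural transformations. For each ordered pair of $0$-cells, \cref{prop:hom-category-of-n-exangulated-categories} provides a $\Hom$-category with $1$-cells as objects, $2$-cells as morphisms and vertical composition $\circ_{v}$; its well-definedness rests on \cref{prop:compositions-are-n-exangulated}\ref{item:vertical-composition-is-n-exangulated}, and the category axioms on \cref{prop:identity-is-n-exangulated-associativity-unitality-and-middle-four-exchange}\ref{item:identity-n-exangulated-transformation-is-n-exangulated} and \ref{item:assoc-unital-n-exangulated-nat-trans}.

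Next I would supply the horizontal structure. Composition of $1$-cells is given by \cref{def:composing-n-exangulated-functors}, and it is well-defined, associative and unital with identity $1$-cells $(\idfunc{\CC},\iden{\BE})$ by \cref{lem:identity-composition-n-exangulated-functors-and-extension-functors-respect-compositions}\ref{item:identity-is-n-exangulated} and \ref{item:1-cell-composite-is-n-exangulated}. To obtain the horizontal composition bifunctor $\CN(b,c)\times\CN(a,b)\to\CN(a,c)$ for $0$-cells $a,b,c$, I would use $\circ$ on objects and $\circ_{h}$ on morphisms: this lands in $2$-cells by \cref{prop:compositions-are-n-exangulated}\ref{item:horizontal-composition-is-n-exangulated}, it sends identity $2$-cells to identity $2$-cells by the general behaviour of horizontal composition of natural transformations together with \eqref{eqn:horizontal-comp-is-vertical-comp-of-whiskers}, and it respects composition precisely by the interchange law \cref{prop:identity-is-n-exangulated-associativity-unitality-and-middle-four-exchange}\ref{item:middle-4-exchange}. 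Associativity and unitality of $\circ_{h}$ from the same lemma then furnish the remaining coherence between $1$-cell composition and horizontal composition.

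The only part not already handled by the preceding results — and hence the crux of why this corollary concerns $\exang{n}$ rather than $\Exang{n}$ — is the smallness requirement highlighted in \cref{rem:not-2-categories}. Here I would argue that for small $\CC$ and $\CC'$ the $\Hom$-category $\CN = \exang{n}((\CC,\BE,\fs),(\CC',\BE',\fs'))$ is itself small. Its objects are $n$-exangulated functors $(\SF,\Gamma)$: an additive functor $\SF\colon\CC\to\CC'$ is determined by its values on the set of objects and the set of morphisms of $\CC$, and $\Gamma$ is a natural transformation determined by set-many components $\tensor[]{\Gamma}{_{(C,A)}}$, so the $1$-cells form a set. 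Its morphisms are $n$-exangulated natural transformations $\beth\colon(\SF,\Gamma)\Rightarrow(\SG,\Lambda)$: each such $\beth$ is determined by its components $(\tensor[]{\beth}{_{X}})_{X\inn\CC}$, and since $\CC$ has a set of objects while each $\CC'(\SF X,\SG X)$ is a set by local smallness of $\CC'$, the collection of such $\beth$ embeds into the set $\prod_{X\inn\CC}\CC'(\SF X,\SG X)$ and hence forms a set. This is exactly the step that fails for $\Exang{n}$, where $\CC$ need not have a set of objects.

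Combining the two preceding paragraphs, all structure and coherence axioms of a $2$-category hold, and the smallness argument guarantees that the $\Hom$-categories are small; therefore $\exang{n}$ is a $2$-category. I expect no real difficulty in the structural verifications, as they are immediate consequences of Subsection~\ref{subsec:composing-n-exangulated-natural-transformations}; the only point deserving care is the smallness bookkeeping, which is precisely the reason the statement is confined to small $0$-cells.
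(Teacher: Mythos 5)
Your proposal is correct and follows essentially the same route as the paper: the paper also obtains \cref{cor:2-cats} as an immediate consequence of the material in Subsection~\ref{subsec:composing-n-exangulated-natural-transformations} (identity, vertical/horizontal composition, interchange law) together with \cref{prop:hom-category-of-n-exangulated-categories} and \cref{lem:identity-composition-n-exangulated-functors-and-extension-functors-respect-compositions}, with the restriction to small $0$-cells serving exactly to defuse the set-theoretic issue of \cref{rem:not-2-categories}. Your write-up merely makes explicit the bookkeeping (in particular the smallness of the $\Hom$-categories) that the paper leaves implicit.
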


The final aim of this section is to provide a \(2\)-categorical understanding of 
how \(n\)-exan\-gu\-lat\-ed concepts relate to notions on the level of 
associated categories of extensions, bringing toge\-ther
our work in  Sections~\ref{section:3} and \ref{section:4}. 
We do this by constructing a functor \mbox{\(\updave \colon \Exang{n} \to \Exactcat\)} 
that respects the \(2\)-categorical properties of the categories involved. 
In particular, 
this induces a \emph{\(2\)-functor} 
\(\updave \colon \exang{n} \to \exactcat\) 
in the sense of \cite[p.~278]{MacLane-categories-for-the-working-mathematician}.

In order to have a \(2\)-functor, one must give an assignment of \(i\)-cells in the domain \mbox{$2$-category} to \(i\)-cells in the codomain $2$-category for \(i \inn \{0,1,2\}\), 
satisfying certain compatibility conditions. Based on the theory developed in \cref{section:3}, we can define the functor \(\updave\) on \(0\)-cells by sending an \(n\)-exangulated category  
to its category of extensions, 
and on \(1\)-cells by sending an \mbox{\(n\)-exangulated} functor \((\SF,\Gamma)\) to \(\tensor[]{\SE}{_{(\SF,\Gamma)}}\) as described in \cref{thm:characterisation-of-n-exangulated-functors}. It remains to consider how \(\updave\) acts on \(2\)-cells, 
that is, on \(n\)-exangulated natural transformations. The next lemma constitutes a first step in this direction. 
We use the Hebrew letter \(\aleph\) (aleph).

\begin{lem}
\label{lem:natural-transformations-between-extension-functors}
Any natural transformation 
\(
\aleph
        \colon 
    \tensor[]{\SE}{_{(\SF,\Gamma)}}
        \Rightarrow
    \tensor[]{\SE}{_{(\SG,\Lambda)}}
\) 
gives rise to natural transformations \(\tensor*[]{\aleph}{^{\ell}}\colon \SF\Rightarrow\SG\) and \(\tensor*[]{\aleph}{^{r}}\colon \SF\Rightarrow\SG\) 
such that 
\(
\tensor[]{\aleph}{_{\delta}}=(\tensor*[]{\aleph}{^{\ell}_{A}},\tensor*[]{\aleph}{^{r}_{C}})
\) 
for each  \(\delta\inn\BE(C,A)\). 
\end{lem}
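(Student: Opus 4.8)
The plan is as follows. Fix $\delta\inn\BE(C,A)$. By \cref{lem:functor-respects-morphisms-plays-well-with-domains-codomains-functions}(i), the object $\tensor[]{\SE}{_{(\SF,\Gamma)}}(\delta) = \tensor[]{\Gamma}{_{(C,A)}}(\delta)$ lies in $\BE'(\SF C,\SF A)$ and $\tensor[]{\SE}{_{(\SG,\Lambda)}}(\delta) = \tensor[]{\Lambda}{_{(C,A)}}(\delta)$ lies in $\BE'(\SG C,\SG A)$, so the component $\tensor[]{\aleph}{_{\delta}}\colon \tensor[]{\SE}{_{(\SF,\Gamma)}}(\delta)\to\tensor[]{\SE}{_{(\SG,\Lambda)}}(\delta)$ is a morphism in $\BE'\dExt{\CC'}$ of the shape $\tensor[]{\aleph}{_{\delta}} = (p_{\delta},q_{\delta})$, where $p_{\delta}\colon\SF A\to\SG A$ and $q_{\delta}\colon\SF C\to\SG C$. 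I will show that $p_{\delta}$ depends only on $A$ and $q_{\delta}$ only on $C$; granting this, I set $\tensor*[]{\aleph}{^{\ell}_{A}}\deff p_{\delta}$ and $\tensor*[]{\aleph}{^{r}_{C}}\deff q_{\delta}$, which gives the formula $\tensor[]{\aleph}{_{\delta}} = (\tensor*[]{\aleph}{^{\ell}_{A}},\tensor*[]{\aleph}{^{r}_{C}})$ directly, and it then remains to verify naturality of the two families.

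The main tool is the naturality of $\aleph$ applied to canonical morphisms of extensions. For $x\colon A\to X$ and $z\colon Z\to C$ in $\CC$, recall from \cite[Rem.~2.4]{HerschendLiuNakaoka-n-exangulated-categories-I-definitions-and-fundamental-properties} that $(x,\iden{C})\colon\delta\to\tensor[]{x}{_{\BE}}\delta$ and $(\iden{A},z)\colon\tensor[]{z}{^{\BE}}\delta\to\delta$ are morphisms in $\BE\dExt{\CC}$. Inserting these into the naturality squares for $\aleph$, using that $\tensor[]{\SE}{_{(\SF,\Gamma)}}$ and $\tensor[]{\SE}{_{(\SG,\Lambda)}}$ respect morphisms over $\SF$ and $\SG$ (so that, for instance, $\tensor[]{\SE}{_{(\SF,\Gamma)}}(x,\iden{C}) = (\SF x,\iden{\SF C})$), and evaluating the componentwise composition rule $(b,d)(a,c)=(ba,dc)$, I obtain the four identities
\begin{equation*}
q_{\tensor[]{x}{_{\BE}}\delta}=q_{\delta},
\qquad
\SG x\circ p_{\delta}=p_{\tensor[]{x}{_{\BE}}\delta}\circ\SF x,
\qquad
p_{\tensor[]{z}{^{\BE}}\delta}=p_{\delta},
\qquad
\SG z\circ q_{\tensor[]{z}{^{\BE}}\delta}=q_{\delta}\circ\SF z.
\end{equation*}

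The step I expect to be the main obstacle is establishing that $p_{\delta}$ and $q_{\delta}$ are genuinely independent of the remaining data; the idea is to \emph{route through the zero object}. Taking $z\colon 0\to C$ in the third identity, the extension $\tensor[]{z}{^{\BE}}\delta$ lands in the trivial group $\BE(0,A)$ and hence equals its unique element, so $p_{\delta}$ coincides with the first component of $\aleph$ at that element; this manifestly depends only on $A$, and not on $C$ nor on the particular $\delta$. Dually, taking $x\colon A\to 0$ in the first identity places $\tensor[]{x}{_{\BE}}\delta$ in the trivial group $\BE(C,0)$, forcing $q_{\delta}$ to depend only on $C$. This legitimises the definitions of $\tensor*[]{\aleph}{^{\ell}_{A}}$ and $\tensor*[]{\aleph}{^{r}_{C}}$ above and yields the claimed description of $\tensor[]{\aleph}{_{\delta}}$.

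Finally, naturality is read off the two remaining identities without any further specialisation. Since $p_{\delta}=\tensor*[]{\aleph}{^{\ell}_{A}}$ holds for every $\delta\inn\BE(C,A)$, and $p_{\tensor[]{x}{_{\BE}}\delta}=\tensor*[]{\aleph}{^{\ell}_{X}}$ because $\tensor[]{x}{_{\BE}}\delta\inn\BE(C,X)$, the second identity becomes the naturality square $\SG x\circ\tensor*[]{\aleph}{^{\ell}_{A}}=\tensor*[]{\aleph}{^{\ell}_{X}}\circ\SF x$, so $\tensor*[]{\aleph}{^{\ell}}\colon\SF\Rightarrow\SG$ is natural. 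In the same way, substituting $q_{\delta}=\tensor*[]{\aleph}{^{r}_{C}}$ and $q_{\tensor[]{z}{^{\BE}}\delta}=\tensor*[]{\aleph}{^{r}_{Z}}$ into the fourth identity produces $\SG z\circ\tensor*[]{\aleph}{^{r}_{Z}}=\tensor*[]{\aleph}{^{r}_{C}}\circ\SF z$, giving naturality of $\tensor*[]{\aleph}{^{r}}\colon\SF\Rightarrow\SG$ and completing the proof.
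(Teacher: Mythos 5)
Your proof is correct and follows essentially the same route as the paper's: both decompose \(\tensor[]{\aleph}{_{\delta}}\) into its two components, exploit naturality of \(\aleph\) at the canonical morphisms of extensions induced by morphisms of \(\CC\), and route through the trivial groups \(\BE(0,A)\) and \(\BE(C,0)\) to see that the components depend only on \(A\) and \(C\) respectively. Your four general identities are just a mild repackaging of the paper's computations (the paper works directly with the zero extensions \(\tensor[_{X}]{0}{_{0}}\), which is exactly your specialisation \(z\colon 0\to C\), resp.\ \(x\colon A\to 0\)), so there is nothing substantive to add.
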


\begin{proof}
Since 
\(
\aleph\colon\tensor[]{\SE}{_{(\SF,\Gamma)}}\Rightarrow\tensor[]{\SE}{_{(\SG,\Lambda)}}
\) is a natural transformation, 
for each \(\delta\inn\BE(C,A)\), 
there is a morphism 
\(
\tensor[]{\aleph}{_{\delta}}\colon\tensor[]{\Gamma}{_{(C,A)}}(\delta)\to \tensor[]{\Lambda}{_{(C,A)}}(\delta)
\)
in \(\BE'\dExt{\CC'}\). 
This implies that there are morphisms 
\(\tensor[]{\ell}{_{\delta}}\colon \SF A\to \SG A\) 
and 
\(\tensor[]{r}{_{\delta}}\colon \SF C\to \SG C\) 
in \(\CC'\) such that  
\(
\tensor[]{\aleph}{_{\delta}}=(\tensor[]{\ell}{_{\delta}},\tensor[]{r}{_{\delta}})
\). 
We claim that 
\(\tensor[]{\ell}{_{\delta}}\) depends only on the object \(A\). 
To see this, 
recall that for each \(Z\inn\CC\) we write \(\tensor[_{Z}]{0}{_{0}}\) for the trivial element of the abelian group \(\BE(0,Z)\). 
Consider the morphism 
\(
(\iden{A},0)\colon \tensor[_{A}]{0}{_{0}} \to \delta
\)
for a fixed extension \(\delta\inn\BE(C,A)\). 
By \cref{rem:equality-of-morphisms-of-extensions}, 
we have the equalities
\begin{align*}
((\SG \iden{A})\tensor[]{\ell\hspace{-4pt}}{_{\tensor[_{A}]{0}{_{0}}}},0)
    &= \tensor[]{\SE}{_{(\SG,\Lambda)}}(\iden{A},0)\circ\tensor*[]{\aleph\hspace{-4pt}}{_{\tensor[_{A}]{0}{_{0}}}} &&\text{as \(\tensor[]{\SE}{_{(\SG,\Lambda)}}\) respects morphisms over \(\SG\)}\\
    &= \tensor*[]{\aleph}{_{\delta}}\circ\tensor[]{\SE}{_{(\SF,\Gamma)}}(\iden{A},0) &&\text{as \(\aleph\) is natural}\\
    &= (\tensor[]{\ell}{_{\delta}}(\SF \iden{A}),0) &&\text{as \(\tensor[]{\SE}{_{(\SF,\Gamma)}}\) respects morphisms over \(\SF\),}
\end{align*}
as morphisms \(\tensor[]{\Gamma}{_{(0,A)}}(\tensor[_{A}]{0}{_{0}})\to \tensor[]{\Lambda}{_{(C,A)}}(\delta)\) in \(\BE'\dExt{\CC'}\). 
Consequently, the morphism \(
\tensor[]{\ell}{_{\delta}} 
    = \tensor[]{\ell\hspace{-4pt}}{_{\tensor[_{A}]{0}{_{0}}}}
\) 
depends only on \(A\), 
and we write 
\(
\tensor*[]{\aleph}{^{\ell}_{A}}
    \deff 
    \tensor[]{\ell}{_{\delta}}
    \colon \SF A\to \SG A
\). 
Similarly, the morphism \(\tensor[]{r}{_{\delta}}\) depends only on \(C\), and we write 
\(
\tensor*[]{\aleph}{^{r}_{C}} \coloneqq \tensor[]{r}{_{\delta}} \colon \SF C\to \SG C
\). 

It remains to show that 
\(
\tensor[]{\aleph}{^{\ell}}
    \deff \tensor[]{\set{\tensor*[]{\aleph}{^{\ell}_{A}}}}{_{A\inn\CC}}
\) 
and 
\(
\tensor[]{\aleph}{^{r}}
    \deff \tensor[]{\set{\tensor*[]{\aleph}{^{r}_{C}}}}{_{C\inn\CC}}
\) 
define natural transformations \(\SF\Rightarrow\SG\). 
Fix a morphism \(x\colon X\to Y\) in \(\CC\). 
The pair
\((x,0)\colon\tensor[_{X}]{0}{_{0}}\to\tensor[_{Y}]{0}{_{0}}\) is a morphism in \(\BE\dExt\CC\), 
so 
\(
\tensor[]{\aleph\hspace{-4pt}}{_{\tensor[_{Y}]{0}{_{0}}}}\tensor[]{\SE}{_{(\SF,\Gamma)}}(x,0)
    =\tensor[]{\SE}{_{(\SG,\Lambda)}}(x,0)\tensor[]{\aleph\hspace{-4pt}}{_{\tensor[_{X}]{0}{_{0}}}}
\)
as \(\aleph\) is natural. 
This yields \(\tensor*[]{\aleph}{^{\ell}_{Y}} \SF x = (\SG x) \tensor*[]{\aleph}{^{\ell}_{X}},\) 
so \(\tensor*[]{\aleph}{^{\ell}}\) is a natural transformation. 
The proof that \(\tensor*[]{\aleph}{^{r}}\) is natural is similar.
\end{proof}
Note that it does not necessarily follow that 
\(\tensor*[]{\aleph}{^{\ell}}\)
and 
\(\tensor*[]{\aleph}{^{r}}\) coincide in the lemma above, as  demonstrated by the following example.

\begin{example}
\label{example:unbalanced-natural-transformation}
Let \(\CC\) be a non-zero additive category. Equip \(\CC\) with its split \(n\)-exangulated 
structure \((\CC,\BE,\fs)\), which is induced by the split \(n\)-exact structure as explained in \cref{example:split-n-exangulated-structure}.
Consider the identity 
\(n\)-exangulated functor 
\(
(\iden{\CC},\iden{\BE}) 
    \colon (\CC,\BE,\fs) \to (\CC,\BE,\fs)
\). 
By \cref{thm:characterisation-of-n-exangulated-functors} and \cref{lem:identity-composition-n-exangulated-functors-and-extension-functors-respect-compositions}(iii), 
we obtain the additive functor 
\(
\tensor[]{\SE}{_{(\iden{\CC},\idfunc{\BE})}}
    =\iden{\BE\dExt{\CC}}
\). 
For 
\(\delta\inn\BE(C,A)\), define 
\(
\tensor[]{\aleph}{_{\delta}}
    \colon \delta = \tensor[]{\SE}{_{(\iden{\CC},\idfunc{\BE})}}(\delta)
        \to \tensor[]{\SE}{_{(\iden{\CC},\idfunc{\BE})}}(\delta) = \delta
\) 
by 
\(\tensor[]{\aleph}{_{\delta}} = (\iden{A},0)\). 
Note that \(\tensor[]{\aleph}{_{\delta}}\) is a morphism \mbox{\(\delta\to \delta\)} in \(\BE\dExt{\CC}\) since \(\BE(C,A)\) is trivial. 
It is straightforward to check that \(\aleph = \{\tensor[]{\aleph}{_{\delta}}\tensor[]{\}}{_{\delta\inn\BE\dExt{\CC}}}\) 
defines a natural transformation 
\(\tensor[]{\SE}{_{(\iden{\CC},\idfunc{\BE})}}\Rightarrow\tensor[]{\SE}{_{(\iden{\CC},\idfunc{\BE})}}\). 
Since \(
\tensor*[]{\aleph}{_{A}^{\ell}} 
    = \iden{A}
    \neq 0
    = \tensor*[]{\aleph}{_{A}^{r}}
\) for any non-zero \(A\inn\CC\), we have that \(\aleph\) is not \emph{balanced} in the sense of \cref{def:balanced-natural-transformation} below. 
\end{example}

\begin{defn}
\label{def:balanced-natural-transformation}
Let \(\aleph\colon \tensor[]{\SE}{_{(\SF,\Gamma)}}\Rightarrow\tensor[]{\SE}{_{(\SG,\Lambda)}}\) be a natural transformation.
In the notation of \cref{lem:natural-transformations-between-extension-functors}, 
we call \(\aleph\) \emph{balanced} provided that \(\tensor*[]{\aleph}{^{\ell}}=\tensor*[]{\aleph}{^{r}}\). 
\end{defn}

We can now prove \cref{thmx:nat-tran-of-n-exan-functors-induces-nat-trans-of-functors-on-Ext-categories} from \cref{sec:introduction}.

\begin{thm}
\label{thm:characterisation-of-n-exangulated-natural-transformations}
There is a one-to-one correspondence
\[
\begin{adjustbox}{scale=1,center}
$\displaystyle
\begin{aligned}[t]
    \Set{
        \begin{array}{c}
            n\textit{-exangulated natural transformations}\\
            \beth\colon (\SF,\Gamma)\Longrightarrow(\SG,\Lambda)
        \end{array}
    }
& \longleftrightarrow 
    \Set{
        \begin{array}{c}
            \textit{balanced natural transformations}\\
            \aleph\colon \tensor[]{\SE}{_{(\SF,\Gamma)}}\Longrightarrow\tensor[]{\SE}{_{(\SG,\Lambda)}}
        \end{array}
    }\\[5pt]
\beth 
\hspace{6pt}  
    &
    \xmapsto{\hspace{15.5pt}}\hspace{6pt}
    \lan \beth \ran 
\\
\tensor*[]{\aleph}{^{\ell}}=\tensor*[]{\aleph}{^{r}} 
\hspace{6pt}
    &
    \xmapsfrom{\hspace{15.5pt}}\hspace{6pt}
    \aleph,
\end{aligned}$
\end{adjustbox}
\]
where 
\(
\lan\beth\tensor[]{\ran}{_{\delta}}=(\tensor[]{\beth}{_{A}}, \tensor[]{\beth}{_{C}})
\)
for all 
\(
A,C\inn\CC 
\)
and each 
\(
\delta\inn\BE(C,A)
\).
\end{thm}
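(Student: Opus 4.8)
The plan is to construct the two assignments explicitly, verify that each is well-defined, and then check that they are mutually inverse; the whole argument rests on \cref{lem:natural-transformations-between-extension-functors} together with the criterion for equality of morphisms in \cref{rem:equality-of-morphisms-of-extensions}. Note first that the functors appearing here are \(\tensor[]{\SE}{_{(\SF,\Gamma)}}\) and \(\tensor[]{\SE}{_{(\SG,\Lambda)}}\) from \cref{thm:characterisation-of-n-exangulated-functors}, which send \(\delta\inn\BE(C,A)\) to \(\tensor[]{\Gamma}{_{(C,A)}}(\delta)\) and \(\tensor[]{\Lambda}{_{(C,A)}}(\delta)\), respectively.

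First I would verify that \(\beth\mapsto\lan\beth\ran\) is well-defined. Given an \(n\)-exangulated natural transformation \(\beth\), I set \(\lan\beth\ran_{\delta}\deff(\beth_{A},\beth_{C})\) for each \(\delta\inn\BE(C,A)\). The defining condition \eqref{eqn:n-exangulated-natural-transformation-property} of \(\beth\) says precisely that \((\beth_{A},\beth_{C})\) is a morphism \(\tensor[]{\Gamma}{_{(C,A)}}(\delta)=\tensor[]{\SE}{_{(\SF,\Gamma)}}(\delta)\to\tensor[]{\Lambda}{_{(C,A)}}(\delta)=\tensor[]{\SE}{_{(\SG,\Lambda)}}(\delta)\) in \(\BE'\dExt{\CC'}\), so each component \(\lan\beth\ran_{\delta}\) is a legitimate morphism. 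For naturality, given a morphism \((a,c)\colon\delta\to\rho\) in \(\BE\dExt{\CC}\) with \(\rho\inn\BE(D,B)\), I compare the two composites around the naturality square. Since \(\tensor[]{\SE}{_{(\SF,\Gamma)}}\) and \(\tensor[]{\SE}{_{(\SG,\Lambda)}}\) respect morphisms over \(\SF\) and \(\SG\) respectively, both composites have first and second components \(\SG a\circ\beth_{A}=\beth_{B}\circ\SF a\) and \(\SG c\circ\beth_{C}=\beth_{D}\circ\SF c\), where the equalities hold by naturality of \(\beth\colon\SF\Rightarrow\SG\). As both composites share domain \(\tensor[]{\SE}{_{(\SF,\Gamma)}}(\delta)\) and codomain \(\tensor[]{\SE}{_{(\SG,\Lambda)}}(\rho)\), \cref{rem:equality-of-morphisms-of-extensions} yields their equality in \(\BE'\dExt{\CC'}\). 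Finally, in the notation of \cref{lem:natural-transformations-between-extension-functors} I read off \(\lan\beth\ran^{\ell}_{A}=\beth_{A}=\lan\beth\ran^{r}_{A}\), so \(\lan\beth\ran\) is balanced in the sense of \cref{def:balanced-natural-transformation}.

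Next I would verify the reverse assignment. Given a balanced natural transformation \(\aleph\), \cref{lem:natural-transformations-between-extension-functors} produces natural transformations \(\aleph^{\ell}=\aleph^{r}\colon\SF\Rightarrow\SG\), and I set \(\beth\deff\aleph^{\ell}\). To see that \(\beth\) is \(n\)-exangulated, I note that for \(\delta\inn\BE(C,A)\) the component \(\aleph_{\delta}=(\aleph^{\ell}_{A},\aleph^{r}_{C})=(\beth_{A},\beth_{C})\) is, by hypothesis, a morphism \(\tensor[]{\Gamma}{_{(C,A)}}(\delta)\to\tensor[]{\Lambda}{_{(C,A)}}(\delta)\) in \(\BE'\dExt{\CC'}\), which is exactly condition \eqref{eqn:n-exangulated-natural-transformation-property}.

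Finally, I would check that the two assignments are mutually inverse. Starting from \(\beth\), the composite returns \(\lan\beth\ran^{\ell}=\beth\) directly from the reading-off above. Starting from a balanced \(\aleph\) with associated \(\beth=\aleph^{\ell}=\aleph^{r}\), the composite gives the natural transformation whose \(\delta\)-component is \((\beth_{A},\beth_{C})=(\aleph^{\ell}_{A},\aleph^{r}_{C})=\aleph_{\delta}\), so it equals \(\aleph\). I expect the only real obstacle to be the bookkeeping in the naturality verification of \(\lan\beth\ran\): one must be careful, as emphasised in \cref{rem:subtle-remark}, that equality of morphisms in \(\BE'\dExt{\CC'}\) requires matching domains and codomains and not merely matching underlying morphisms of \(\CC'\). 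This is precisely why \cref{rem:equality-of-morphisms-of-extensions} must be invoked rather than comparing components of \(\CC'\) in isolation.
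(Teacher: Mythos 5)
Your proposal is correct and follows essentially the same route as the paper's own proof: the same construction of both assignments, the same appeal to \cref{lem:natural-transformations-between-extension-functors} for the reverse direction, and the same naturality check via \cref{rem:equality-of-morphisms-of-extensions} using that \(\tensor[]{\SE}{_{(\SF,\Gamma)}}\) and \(\tensor[]{\SE}{_{(\SG,\Lambda)}}\) respect morphisms over \(\SF\) and \(\SG\) together with the naturality of \(\beth\). The only difference is cosmetic: you spell out the mutual-inverse verification, which the paper leaves as a straightforward check.
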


\begin{proof}
Suppose first that \(\beth\colon (\SF,\Gamma)\Rightarrow(\SG,\Lambda)\) is an \(n\)-exangulated natural transformation, and consider the collection 
 \(
\lan \beth \ran 
    = \tensor[]{\set{\lan\beth\tensor[]{\ran}{_{\delta}}}}{_{\delta\inn\BE\dExt{\CC}}}
\)
where \(\lan\beth\tensor[]{\ran}{_{\delta}}\) is as
defined in the statement of the theorem. 
Since \(\beth\) is \(n\)-exangulated, each pair 
\(
\lan\beth\tensor[]{\ran}{_{\delta}}
    = (\tensor[]{\beth}{_{A}},\tensor[]{\beth}{_{C}})
\) 
is a morphism from 
\(
\tensor[]{\SE}{_{(\SF,\Gamma)}}(\delta) 
    = \tensor[]{\Gamma}{_{(C,A)}}(\delta)
\)
to 
\(
\tensor[]{\Lambda}{_{(C,A)}}(\delta) 
    = \tensor[]{\SE}{_{(\SG,\Lambda)}}(\delta)
\) 
in \(\BE'\dExt{\CC'}\).
Let \((a,c)\colon \delta\to \rho\) be a morphism in \(\BE\dExt\CC\) with 
\(\delta\inn\BE(C,A)\) and \(\rho\inn\BE(D,B)\).
We must show that the square
\[
\begin{tikzcd}
\tensor[]{\SE}{_{(\SF,\Gamma)}}(\delta)
    \arrow{r}{\lan\beth\tensor[]{\ran}{_{\delta}}}
    \arrow{d}[swap]{\tensor[]{\SE}{_{(\SF,\Gamma)}}(a,c)}
& \tensor[]{\SE}{_{(\SG,\Lambda)}}(\delta)
    \arrow{d}{\tensor[]{\SE}{_{(\SG,\Lambda)}}(a,c)}\\
\tensor[]{\SE}{_{(\SF,\Gamma)}}(\rho) \arrow{r}{\lan\beth\tensor[]{\ran}{_{\rho}}}&  \tensor[]{\SE}{_{(\SG,\Lambda)}}(\rho)
\end{tikzcd}
\]
commutes in \(\BE'\dExt{\CC'}\). 
By \cref{rem:equality-of-morphisms-of-extensions}, it is enough to observe that 
\begin{align*}
\tensor[]{\SE}{_{(\SG,\Lambda)}}(a,c)\lan\beth\tensor[]{\ran}{_{\delta}}
    &= (\SG a,\SG c)(\tensor[]{\beth}{_{A}},\tensor[]{\beth}{_{C}}) &&\text{as \(\tensor[]{\SE}{_{(\SG,\Lambda)}}\) respects morphisms over \(\SG\)}\\
    &= (\tensor[]{\beth}{_{B}}\SF a,\tensor[]{\beth}{_{D}}\SF c) &&\text{since \(\beth\) is natural}\\
    &= \lan\beth\tensor[]{\ran}{_{\rho}}\tensor[]{\SE}{_{(\SF,\Gamma)}}(a,c)&&\text{as \(\tensor[]{\SE}{_{(\SF,\Gamma)}}\) respects morphisms over \(\SF\)}.
\end{align*}
This means that 
\(
\lan\beth\ran\colon\tensor[]{\SE}{_{(\SF,\Gamma)}}\Rightarrow\tensor[]{\SE}{_{(\SG,\Lambda)}}
\) 
is natural, 
and it 
is balanced by construction.

On the other hand, suppose \(\aleph\colon\tensor[]{\SE}{_{(\SF,\Gamma)}}\Rightarrow\tensor[]{\SE}{_{(\SG,\Lambda)}}\) is balanced. 
Write 
\(\tensor*[]{\aleph}{^{\ell}}=\tensor*[]{\aleph}{^{r}}\colon\SF\Rightarrow\SG\) for the natural transformation from \cref{lem:natural-transformations-between-extension-functors} satisfying  \(\tensor[]{\aleph}{_{\delta}}=(\tensor*[]{\aleph}{^{\ell}_{A}},\tensor*[]{\aleph}{^{\ell}_{C}})\) for each \(\delta\inn\BE(C,A)\). 
The pair 
\(
\tensor[]{\aleph}{_{\delta}} 
    = (\tensor*[]{\aleph}{^{\ell}_{A}},\tensor*[]{\aleph}{^{\ell}_{C}})
\) 
is a morphism from 
\(
\tensor[]{\Gamma}{_{(C,A)}}(\delta)
    = \tensor[]{\SE}{_{(\SF,\Gamma)}}(\delta) 
\)
to 
\(
    \tensor[]{\SE}{_{(\SG,\Lambda)}}(\delta)
        = \tensor[]{\Lambda}{_{(C,A)}}(\delta)
\), 
so 
\(
\tensor*[]{\aleph}{^{\ell}}
\) 
is an \(n\)-exangulated natural transformation. 

We can thus conclude that the two assignments \(\beth \mapsto \lan \beth \ran \) and \(\aleph \mapsto \tensor*[]{\aleph}{^{\ell}}=\tensor*[]{\aleph}{^{r}} \) from the statement of the theorem are well-defined. It is straightforward to check that they are mutually inverse, and hence define a one-to-one correspondence.
\end{proof}

Just as \cref{thm:characterisation-of-n-exangulated-functors} allowed us to define the functor \(\updave \colon \Exang{n} \to \Exactcat\) on \(1\)-cells, the characterisation in 
\cref{thm:characterisation-of-n-exangulated-natural-transformations} enables us to define \(\updave\) on \(2\)-cells by sending an \(n\)-exangulated natural transformation \(\beth\) to the balanced natural transformation \(\lan \beth \ran\). 
We can hence complete the definition of \(\updave\).
For \(i\inn\{0,1,2\}\), denote by \(\Exang{n}_{i}\) and \(\Exactcat_{i}\) the collection of \(i\)-cells of \(\Exang{n}\) and \(\Exactcat\), respectively.

\begin{defn}
\label{def:updave}
Let \(\updave = (\updave_{0},\updave_{1},\updave_{2}) \colon \Exang{n} \to \Exactcat\) be defined by the assignments
\(\updave_{i}\colon \Exang{n}_{i} \to \Exactcat_{i}\), 
where:
\begin{align*}
\updave_{0}  (\CC,\BE,\fs)  
    &\deff (\BE\dExt\CC,\CX_{\BE}),\\
\updave_{1}  (\SF,\Gamma)  
    &\deff\tensor[]{\SE}{_{(\SF,\Gamma)}},\\
\updave_{2} ( \beth ) 
    &\deff\lan\beth\ran.
\end{align*}
\end{defn}

\begin{rem}\label{rem:notation-remark-for-2-functor-theorem}
We discuss \cref{def:updave} with a view towards explaining \cref{thm:2-functor}.
\begin{enumerate}[label=\textup{(\roman*)}]
    
    \item\label{item:rem-0updave-0-and-1-give-functor-on-underyling-1-categories} The assignments \(\updave_{i}\) are well-defined: by \cref{prop:ECC-is-a-category}, the assignment \(\updave_{0}\) takes an object of \(\Exang{n}\) to an object of \(\Exactcat\);  
    by \cref{thm:characterisation-of-n-exangulated-functors} and \cref{prop:F-additive-iff-E-additive-iff-E-exact}, 
\(\updave_{1}\) 
associates an exact functor 
\(\updave_{1}  (\SF,\Gamma) = \tensor[]{\SE}{_{(\SF,\Gamma)}}\) 
from \(\updave_{0}(\CC,\BE,\fs)\) to \(\updave_{0}(\CC',\BE',\fs')\) 
to each \(n\)-exangulated functor 
\((\SF,\Gamma)\colon (\CC,\BE,\fs)\to (\CC',\BE',\fs')\); 
and \(\updave_{2}\) takes an \(n\)-exangulated natural transformation 
\(\beth\colon (\SF,\Gamma)\Rightarrow(\SG,\Lambda)\) 
to a natural transformation 
\(
\updave_{2}(\beth) 
    = \lan\beth\ran 
    \colon \updave_{1}(\SF,\Gamma) \to \updave_{1}(\SG,\Lambda)
\) 
by \cref{thm:characterisation-of-n-exangulated-natural-transformations}. 

    \item\label{item:rem-functor-on-Hom-categories} In \cref{thm:2-functor} below, given \(n\)-exangulated categories \((\CC,\BE,\fs)\) and \((\CC',\BE',\fs')\), 
we denote by 
\(\CA
    \deff 
    \Exactcat( 
        (\BE\dExt{\CC},\CX_{\BE}),
        (\BE'\dExt{\CC'},\CX_{\BE'})
    )
\)
the category 
whose objects are exact functors from \((\BE\dExt{\CC},\CX_{\BE})\)
to 
\((\BE'\dExt{\CC'},\CX_{\BE'})\)
and 
whose morphisms are natural transformations. 
Composition of morphisms in \(\CA\) is vertical composition of natural transformations. 
It follows from 
\cite[pp.~40, 43]{MacLane-categories-for-the-working-mathematician} 
that \(\CA\) is a category. 
\item If one ignores the set-theoretic issue from \cref{rem:not-2-categories}, then \cref{thm:2-functor} below can be interpreted as showing that
the triplet \(\updave\) satisfies the properties of a \(2\)-functor \(\Exang{n}\to\Exactcat\); see \cite[Prop.\ 4.1.8]{JohnsonYau-2-dimensional-categories}.
\end{enumerate}
\end{rem}

\begin{thm}
\label{thm:2-functor}
The following statements hold for the assignments
\(\updave_{0}\), \(\updave_{1}\) and \(\updave_{2}\).
\begin{enumerate}[label=\textup{(\roman*)}] 
    
    \item\label{item:updave-0-and-1-give-functor-on-underyling-1-categories}
    The pair  
    \((\updave_{0}, \updave_{1})\) 
    defines a functor 
    \(\Exang{n}\to\Exactcat\).
    
    \item\label{item:functor-on-Hom-categories} 
    Given \(n\)-exangulated categories \((\CC,\BE,\fs)\) and \((\CC',\BE',\fs')\), 
    the pair 
    \((\updave_{1}, \updave_{2})\) 
    defines a functor \( \CN \to \CA\) in the notation of \textup{\cref{prop:hom-category-of-n-exangulated-categories}} and \textup{\cref{rem:notation-remark-for-2-functor-theorem}\ref{item:rem-functor-on-Hom-categories}}.
    
    \item\label{item:updave2-preserves-horizontal-composition} The assignment \(\updave_{2}\) preserves horizontal composition.
\end{enumerate}
\end{thm}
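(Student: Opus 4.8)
The plan is to obtain parts \ref{item:updave-0-and-1-give-functor-on-underyling-1-categories} and \ref{item:functor-on-Hom-categories} by assembling results already in hand, and to settle part \ref{item:updave2-preserves-horizontal-composition} by a direct componentwise computation. For \ref{item:updave-0-and-1-give-functor-on-underyling-1-categories}, I would first recall that \(\updave_{0}\) is well defined on \(0\)-cells by \cref{prop:ECC-is-a-category}, and that \(\updave_{1}(\SF,\Gamma) = \SE_{(\SF,\Gamma)}\) is an exact functor by \cref{thm:characterisation-of-n-exangulated-functors} together with \cref{prop:F-additive-iff-E-additive-iff-E-exact}, so that \(\updave_{1}\) indeed lands in \(\Exactcat\). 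Functoriality is then precisely \cref{lem:identity-composition-n-exangulated-functors-and-extension-functors-respect-compositions}\ref{item:induced-scriptE-functors-respect-identity-and-composition}: the identity \(n\)-exangulated functor is sent to the identity functor, and \(\SE_{(\SL,\Phi)\circ(\SF,\Gamma)} = \SE_{(\SL,\Phi)}\circ\SE_{(\SF,\Gamma)}\). Thus this part is a matter of citing the correct references.

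For \ref{item:functor-on-Hom-categories}, recall that composition of morphisms in both \(\CN\) and \(\CA\) is vertical composition \(\circ_{v}\). On objects, \(\updave_{1}\) sends an \(n\)-exangulated functor to an exact functor as in \ref{item:updave-0-and-1-give-functor-on-underyling-1-categories}; on morphisms, \(\updave_{2}(\beth) = \lan\beth\ran\) is a natural transformation by \cref{thm:characterisation-of-n-exangulated-natural-transformations}. It then remains to check that \(\updave_{2}\) preserves identities and \(\circ_{v}\). Evaluating at \(\delta\inn\BE(C,A)\), the identity \(\iden{(\SF,\Gamma)} = \iden{\SF}\) gives \(\lan\iden{\SF}\ran_{\delta} = (\iden{\SF A},\iden{\SF C}) = \iden{\SE_{(\SF,\Gamma)}(\delta)}\), whence \(\lan\iden{\SF}\ran = \iden{\SE_{(\SF,\Gamma)}}\); and for a composable pair \(\beth,\beth'\) one reads off \(\lan\beth'\circ_{v}\beth\ran_{\delta} = (\beth'_{A}\beth_{A},\beth'_{C}\beth_{C}) = (\lan\beth'\ran\circ_{v}\lan\beth\ran)_{\delta}\) from the definition of \(\circ_{v}\) in \cref{notn:vertical-composition-and-horizontal-composition-of-2-cells} and the composition rule in \(\BE'\dExt{\CC'}\). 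Equality of the resulting natural transformations is then confirmed via \cref{rem:equality-of-morphisms-of-extensions}.

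The substantive statement is \ref{item:updave2-preserves-horizontal-composition}, which asserts \(\lan\daleth\circ_{h}\beth\ran = \lan\daleth\ran\circ_{h}\lan\beth\ran\). First I would note that \(\daleth\circ_{h}\beth\) is \(n\)-exangulated by \cref{prop:compositions-are-n-exangulated}\ref{item:horizontal-composition-is-n-exangulated}, so the left-hand side is defined; and by \cref{lem:identity-composition-n-exangulated-functors-and-extension-functors-respect-compositions}\ref{item:induced-scriptE-functors-respect-identity-and-composition} its source and target are \(\SE_{(\SL,\Phi)}\circ\SE_{(\SF,\Gamma)}\) and \(\SE_{(\SM,\Psi)}\circ\SE_{(\SG,\Lambda)}\), matching those of \(\lan\daleth\ran\circ_{h}\lan\beth\ran\), so the two are genuinely comparable. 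It then suffices to compare components at each \(\delta\inn\BE(C,A)\). Using \((\daleth\circ_{h}\beth)_{X} = \daleth_{\SG X}(\SL\beth_{X})\) from \cref{notn:vertical-composition-and-horizontal-composition-of-2-cells}, the left-hand side is \(\lan\daleth\circ_{h}\beth\ran_{\delta} = (\daleth_{\SG A}(\SL\beth_{A}),\daleth_{\SG C}(\SL\beth_{C}))\). For the right-hand side, horizontal composition gives \((\lan\daleth\ran\circ_{h}\lan\beth\ran)_{\delta} = \lan\daleth\ran_{\SE_{(\SG,\Lambda)}(\delta)}\circ\SE_{(\SL,\Phi)}(\lan\beth\ran_{\delta})\); here the key observation is that \(\SE_{(\SG,\Lambda)}(\delta) = \Lambda_{(C,A)}(\delta)\) lies in \(\BE'(\SG C,\SG A)\), so \(\lan\daleth\ran_{\SE_{(\SG,\Lambda)}(\delta)} = (\daleth_{\SG A},\daleth_{\SG C})\), while \(\SE_{(\SL,\Phi)}\) respects morphisms over \(\SL\) and hence \(\SE_{(\SL,\Phi)}(\lan\beth\ran_{\delta}) = (\SL\beth_{A},\SL\beth_{C})\). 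Composing in \(\BE''\dExt{\CC''}\) reproduces the left-hand side, and \cref{rem:equality-of-morphisms-of-extensions} finishes the verification. The hard part is purely the bookkeeping: correctly identifying the index of \(\lan\daleth\ran\) at the extension \(\SE_{(\SG,\Lambda)}(\delta)\inn\BE'(\SG C,\SG A)\), and invoking \cref{lem:identity-composition-n-exangulated-functors-and-extension-functors-respect-compositions}\ref{item:induced-scriptE-functors-respect-identity-and-composition} so that \(\SE\) applied to a composite of \(n\)-exangulated functors agrees with the composite of the induced exact functors.
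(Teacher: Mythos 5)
Your proposal is correct and follows essentially the same route as the paper: parts (i) and (ii) by citing \cref{prop:ECC-is-a-category}, \cref{thm:characterisation-of-n-exangulated-functors}, \cref{prop:F-additive-iff-E-additive-iff-E-exact}, \cref{thm:characterisation-of-n-exangulated-natural-transformations} and \cref{lem:identity-composition-n-exangulated-functors-and-extension-functors-respect-compositions}\ref{item:induced-scriptE-functors-respect-identity-and-composition}, plus the componentwise check of identities and vertical composition; and part (iii) by the same componentwise computation, matching domains/codomains via \cref{lem:identity-composition-n-exangulated-functors-and-extension-functors-respect-compositions}\ref{item:induced-scriptE-functors-respect-identity-and-composition}, identifying \(\lan\daleth\tensor[]{\ran}{_{\tensor[]{\SE}{_{(\SG,\Lambda)}}(\delta)}} = (\tensor[]{\daleth}{_{\SG A}},\tensor[]{\daleth}{_{\SG C}})\), using that \(\tensor[]{\SE}{_{(\SL,\Phi)}}\) respects morphisms over \(\SL\), and concluding with \cref{rem:equality-of-morphisms-of-extensions}. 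The only cosmetic difference is that you evaluate both sides and meet in the middle, whereas the paper writes a single chain of equalities from left to right.
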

\begin{proof}
\ref{item:updave-0-and-1-give-functor-on-underyling-1-categories}\; 
This part follows from \cref{rem:notation-remark-for-2-functor-theorem}\ref{item:rem-0updave-0-and-1-give-functor-on-underyling-1-categories} and \cref{lem:identity-composition-n-exangulated-functors-and-extension-functors-respect-compositions}\ref{item:induced-scriptE-functors-respect-identity-and-composition}.

\ref{item:functor-on-Hom-categories}\; 
The second statement holds by 
\cref{rem:notation-remark-for-2-functor-theorem}\ref{item:rem-0updave-0-and-1-give-functor-on-underyling-1-categories}
combined with 
a straightforward check to see that \(\updave_{2}\) preserves identity morphisms (i.e.\ identity \(n\)-exangulated natural transformations) and (vertical) composition.

\ref{item:updave2-preserves-horizontal-composition}\;
We must show that the natural transformations 
\(
\updave_{2}(\daleth\circ_{h}\beth)
    \colon 
        \tensor[]{\SE}{_{(\SL,\Phi)\circ(\SF,\Gamma)}}
        \Rightarrow 
        \tensor[]{\SE}{_{(\SM,\Psi)\circ(\SG,\Lambda)}}
\) 
and 
\(
(\updave_{2}\daleth)\circ_{h}(\updave_{2}\beth)
    \colon 
        \tensor[]{\SE}{_{(\SL,\Phi)}}\circ\tensor[]{\SE}{_{(\SF,\Gamma)}}
        \Rightarrow 
        \tensor[]{\SE}{_{(\SM,\Psi)}}\circ\tensor[]{\SE}{_{(\SG,\Lambda)}}
\)
are equal. 
Notice first that their domains (resp.\ codomains) agree by \cref{lem:identity-composition-n-exangulated-functors-and-extension-functors-respect-compositions}\ref{item:induced-scriptE-functors-respect-identity-and-composition}. 
Consequently, for \(\delta\inn\BE(C,A)\), the morphisms 
\(
(\updave_{2}(\daleth\circ_{h}\beth)\tensor[]{)}{_{\delta}}
\) 
and 
\(
((\updave_{2}\daleth)\circ_{h}(\updave_{2}\beth)\tensor[]{)}{_{\delta}}
\) 
have the same domain  
(resp.\ codomain). 
Thus, by \cref{rem:equality-of-morphisms-of-extensions}, it is enough to note that
\phantom{\qedhere}
\begin{align*}
(\updave_{2}(\daleth\circ_{h}\beth)\tensor[]{)}{_{\delta}}
    &= \lan\daleth\circ_{h}\beth\tensor[]{\ran}{_{\delta}}
        &&\text{by the definition of \(\updave_{2}\)}
        \\
    &= ((\daleth\circ_{h}\beth\tensor[]{)}{_{A}},(\daleth\circ_{h}\beth\tensor[]{)}{_{C}})
        &&\text{see \cref{thm:characterisation-of-n-exangulated-natural-transformations}}
        \\
    &= (\tensor[]{\daleth}{_{\SG A}}(\SL \tensor[]{\beth}{_{A}}),\tensor[]{\daleth}{_{\SG C}}(\SL \tensor[]{\beth}{_{C}}))
        &&\text{see \Cref{notn:vertical-composition-and-horizontal-composition-of-2-cells}}
        \\
    &= (\tensor[]{\daleth}{_{\SG A}},\tensor[]{\daleth}{_{\SG C}})(\SL \tensor[]{\beth}{_{A}},\SL \tensor[]{\beth}{_{C}})
        &&\text{by the definition of composition in \(\BE''\dExt{\CC''}\)}
        \\
    &= \lan\daleth\tensor[]{\ran}{_{\tensor[]{\SE}{_{(\SG,\Lambda)}}(\delta)}} \tensor[]{\SE}{_{(\SL,\Phi)}}(\lan\beth\tensor[]{\ran}{_{\delta}})
        &&\text{as \(\tensor[]{\SE}{_{(\SL,\Phi)}}\) respects morphisms over \(\SL\)}
        \\
    &= (\lan\daleth\ran \circ_{h}\lan\beth\ran\tensor[]{)}{_{\delta}}
        &&\text{using the definition of \(\circ_{h}\)}
        \\
    &= ((\updave_{2}\daleth)\circ_{h}(\updave_{2}\beth)\tensor[]{)}{_{\delta}}
        &&\text{by the definition of \(\updave_{2}\).}\tag*{$\qed$}
\end{align*}
\end{proof}

As \cref{thm:2-functor} establishes that the functor \(\updave\colon \Exang{n} \to \Exactcat\) behaves just like a \(2\)-functor, it enjoys similar properties. 
For example, it is known that \(2\)-functors preserve adjunctions; see e.g.\ \cite[Prop.~6.1.7]{JohnsonYau-2-dimensional-categories}. 
Thus, we deduce the following result, which is readily shown by applying \(\updave_{2}\) to the triangle identities \eqref{eqn:triangle-identities-as-whiskerings}, 
and using that \(\updave_{2}\) preserves vertical and horizontal composition and identities of \(2\)-cells.

\begin{cor}
\label{cor:n-exangulated-adjoint-pair-gives-adjoint-pair-of-functors-on-Ext-categories}
If \(((\SF,\Gamma),(\SA,\NT))\) is an \(n\)-exangulated adjoint pair, then \((\tensor[]{\SE}{_{(\SF,\Gamma)}},\tensor[]{\SE}{_{(\SA,\NT)}})\) is an adjoint pair of exact functors.
\end{cor}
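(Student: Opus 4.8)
The plan is to exploit the fact, established in \cref{thm:2-functor}, that the triplet \(\updave=(\updave_{0},\updave_{1},\updave_{2})\) behaves exactly like a \(2\)-functor, together with the standard principle that a \(2\)-functor carries adjunctions to adjunctions (see e.g.\ \cite[Prop.~6.1.7]{JohnsonYau-2-dimensional-categories}). Concretely, I would take the \(n\)-exangulated unit \(\tsadi\) and counit \(\mem\) of the given \(n\)-exangulated adjoint pair, apply \(\updave_{2}\) to them, and verify that the resulting natural transformations \(\lan\tsadi\ran\) and \(\lan\mem\ran\) serve as the unit and counit of an adjunction between \(\tensor[]{\SE}{_{(\SF,\Gamma)}}\) and \(\tensor[]{\SE}{_{(\SA,\NT)}}\).

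First I would pin down the domains and codomains of \(\lan\tsadi\ran\) and \(\lan\mem\ran\). By \cref{thm:characterisation-of-n-exangulated-natural-transformations}, \(\updave_{2}\) sends the \(n\)-exangulated natural transformations \(\tsadi\colon(\idfunc{\CC},\iden{\BE})\Rightarrow(\SA,\NT)\circ(\SF,\Gamma)\) and \(\mem\colon(\SF,\Gamma)\circ(\SA,\NT)\Rightarrow(\idfunc{\CC'},\iden{\BE'})\) to balanced natural transformations. Using \cref{lem:identity-composition-n-exangulated-functors-and-extension-functors-respect-compositions}\ref{item:induced-scriptE-functors-respect-identity-and-composition}, which gives \(\tensor[]{\SE}{_{(\idfunc{\CC},\iden{\BE})}}=\iden{\BE\dExt{\CC}}\) and \(\tensor[]{\SE}{_{(\SA,\NT)\circ(\SF,\Gamma)}}=\tensor[]{\SE}{_{(\SA,\NT)}}\circ\tensor[]{\SE}{_{(\SF,\Gamma)}}\) (and dually for the counit), I would read off that \(\lan\tsadi\ran\colon\iden{\BE\dExt{\CC}}\Rightarrow\tensor[]{\SE}{_{(\SA,\NT)}}\circ\tensor[]{\SE}{_{(\SF,\Gamma)}}\) and \(\lan\mem\ran\colon\tensor[]{\SE}{_{(\SF,\Gamma)}}\circ\tensor[]{\SE}{_{(\SA,\NT)}}\Rightarrow\iden{\BE'\dExt{\CC'}}\), which are exactly the shapes required of a unit and a counit.

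The main step is then to deduce the triangle identities for \(\lan\tsadi\ran\) and \(\lan\mem\ran\). I would apply \(\updave_{2}\) to both equations in \eqref{eqn:triangle-identities-as-whiskerings}. Since \(\updave_{2}\) preserves vertical composition and identity \(2\)-cells by \cref{thm:2-functor}\ref{item:functor-on-Hom-categories}, and horizontal composition by \cref{thm:2-functor}\ref{item:updave2-preserves-horizontal-composition}, applying \(\updave_{2}\) to the first identity converts \((\mem \circ_{h} \iden{(\SF,\Gamma)}) \circ_{v} (\iden{(\SF,\Gamma)} \circ_{h} \tsadi)=\iden{(\SF,\Gamma)}\) into
\[
(\lan\mem\ran \circ_{h} \iden{\tensor[]{\SE}{_{(\SF,\Gamma)}}}) \circ_{v} (\iden{\tensor[]{\SE}{_{(\SF,\Gamma)}}} \circ_{h} \lan\tsadi\ran) = \iden{\tensor[]{\SE}{_{(\SF,\Gamma)}}},
\]
while the second identity yields the companion equation with \(\tensor[]{\SE}{_{(\SA,\NT)}}\) in place of \(\tensor[]{\SE}{_{(\SF,\Gamma)}}\). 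These are precisely the counit-unit equations for the pair \((\tensor[]{\SE}{_{(\SF,\Gamma)}},\tensor[]{\SE}{_{(\SA,\NT)}})\), so by \cite[Thm.~IV.1.1]{MacLane-categories-for-the-working-mathematician} this pair is an adjunction. Both functors are exact by \cref{prop:F-additive-iff-E-additive-iff-E-exact}, which finishes the argument.

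The only genuinely delicate point I anticipate is bookkeeping of hom-categories: the two triangle identities are equalities of \(2\)-cells living in \(\Exang{n}((\CC,\BE,\fs),(\CC',\BE',\fs'))\) and in the opposite direction, while the horizontal composites \(\mem \circ_{h} \iden{(\SF,\Gamma)}\) and the like mix \(2\)-cells drawn from several hom-categories. I would need to check that each invocation of \(\updave_{2}\) (preservation of \(\circ_{v}\), of \(\circ_{h}\), and of identities) is legitimately supplied by \cref{thm:2-functor} across the relevant hom-categories, and in particular that \(\iden{(\SF,\Gamma)}\) is indeed sent to \(\iden{\tensor[]{\SE}{_{(\SF,\Gamma)}}}\). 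Beyond this, the proof is purely formal, amounting to the usual verification that a \(2\)-functor preserves adjunctions.
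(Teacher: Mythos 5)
Your proposal is correct and follows essentially the same route as the paper: the paper deduces the corollary precisely by applying \(\updave_{2}\) to the triangle identities \eqref{eqn:triangle-identities-as-whiskerings} and invoking \cref{thm:2-functor} for preservation of vertical composition, horizontal composition and identity \(2\)-cells, with exactness supplied by \cref{prop:F-additive-iff-E-additive-iff-E-exact}. Your additional bookkeeping via \cref{lem:identity-composition-n-exangulated-functors-and-extension-functors-respect-compositions}\ref{item:induced-scriptE-functors-respect-identity-and-composition} to pin down the domains and codomains of \(\lan\tsadi\ran\) and \(\lan\mem\ran\) is exactly the kind of routine verification the paper leaves implicit.
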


\cref{cor:finalcorollary} below yields \cref{thmx:finalcorollary} from \cref{sec:introduction}.
It follows immediately from \cref{thm:2-functor} that the restriction of \(\updave = ( \updave_{0}, \updave_{1}, \updave_{2} )\) to \(\exang{n}\) is a \(2\)-functor. 
One checks that \(\updave\) is faithful on \(1\)-cells by using that any morphism \(x\colon X\to Y\) in \(\CC\) induces a morphism \((x,0)\colon \tensor[_{X}]{0}{_{0}}\to \tensor[_{Y}]{0}{_{0}}\) of extensions. 
For an example showing that \(\updave\) is not full on \(1\)-cells, see \cref{example:not-full} below. 
The \(2\)-functor is faithful but not full on \(2\)-cells by \cref{thm:characterisation-of-n-exangulated-natural-transformations} and 
\cref{example:unbalanced-natural-transformation}, respectively.

\begin{example}
\label{example:not-full}
Let \(\CC\) be a non-zero additive category. 
Equip \(\CC\) with the split \(n\)-exangulated structure \((\CC,\BE,\fs)\), which is induced by the split \(n\)-exact structure as explained in \cref{example:split-n-exangulated-structure}. 
Hence, the category \(\BE\dExt{\CC}\) consists of objects of the form \(\tensor[_{A}]{0}{_{C}}\), one for each pair \(A,C\inn\CC\). 
Any conflation in \((\BE\dExt{\CC},\CX_{\BE})\) is isomorphic to one of the form 
\(
\begin{tikzcd}[column sep=0.5cm]
\tensor[_{A}]{0}{_{C}} \arrow{r} & \tensor[_{A\oplus A'}]{0}{_{C\oplus C'}} \arrow{r} & \tensor[_{A'}]{0}{_{C'}}
\end{tikzcd}
\) 
given by 
\((\tensor[]{\iota}{_{A}},\tensor[]{\iota}{_{C}})\) and \((\tensor[]{\pi}{_{A'}},\tensor[]{\pi}{_{C'}})\); see the discussion around \eqref{eqn:conflation-in-exact-structure-canonical-split}.

Define 
\(\SE \colon \BE\dExt{\CC}\to \BE\dExt{\CC}\)
by \(\tensor[_{A}]{0}{_{C}} \mapsto \tensor[_{C}]{0}{_{A}}\) and \((a,c) \mapsto (c,a)\). 
It is straightforward to check that \(\SE\) is a 
functor by noting that any pair of morphisms \(a\colon A\to B\) and \(c\colon C\to D\) in \(\CC\) defines a morphism \((a,c)\colon \tensor[_{A}]{0}{_{C}}\to \tensor[_{B}]{0}{_{D}} \) in  \(\BE\dExt{\CC}\). Furthermore,
a conflation of the form described above is sent under \(\SE\) to the sequence 
\(
\begin{tikzcd}[column sep=0.5cm]
\tensor[_{C}]{0}{_{A}} \arrow{r} & \tensor[_{C\oplus C'}]{0}{_{A\oplus A'}} \arrow{r} & \tensor[_{C'}]{0}{_{A'}} 
\end{tikzcd}
\) given by the morphisms \((\tensor[]{\iota}{_{C}},\tensor[]{\iota}{_{A}})\) and \((\tensor[]{\pi}{_{C'}},\tensor[]{\pi}{_{A'}})\). 
This is an element of \(\CX_{\BE}\), so  \(\SE\) is exact.

We claim that \(\SE \neq \tensor[]{\SE}{_{(\SF,\Gamma)}}\) for every \(n\)-exangulated functor \((\SF,\Gamma)\colon(\CC,\BE,\fs)\to(\CC,\BE,\fs)\). 
To see this, assume there exists some additive functor \(\SF \colon \CC \to \CC\) such that \(\SE\) respects morphisms over \(\SF\).  
Then it must be the case that \(\SF X = X\) for all \(X \inn \CC\), as 
\[
(\iden{X},\iden{X}) 
    = \SE(\iden{X},\iden{X}) 
    =
    (\SF\iden{X}, \SF\iden{X}) 
    = (\iden{\SF X},\iden{\SF X}). 
\]
Now 
consider
\(A,C\inn\CC\) with \(A\niso C\).
The identity
\( (\iden{A},\iden{C}) \colon \tensor[_{A}]{0}{_{C}} \to \tensor[_{A}]{0}{_{C}}\) 
is equal to
\(
    (\iden{\SF A},\iden{\SF C}) 
    = (\SF\iden{A}, \SF\iden{C}) 
    = \SE (\iden{A},\iden{C}) 
    = (\iden{C},\iden{A}). 
\)
This implies that \(\iden{A} = \iden{C}\), which is a contradiction.
\end{example}

\begin{cor}
\label{cor:finalcorollary}
Restriction of the assignments \(\updave_{i}\) from \textup{\cref{def:updave}} yields a \(2\)-functor 
\(
\updave = ( \updave_{0}, \updave_{1}, \updave_{2} )
    \colon \exang{n}\to\exactcat
\). 
This \(2\)-functor is faithful on \(1\)-cells and \(2\)-cells, but is full on neither \(1\)-cells nor \(2\)-cells.
\end{cor}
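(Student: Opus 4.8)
The plan is to assemble the statement entirely from \cref{thm:2-functor} together with the two explicit examples at the end of the subsection. For the \(2\)-functor claim, I would observe that the three statements of \cref{thm:2-functor} are precisely the data required of a \(2\)-functor (as recorded in \cref{rem:notation-remark-for-2-functor-theorem}), so it only remains to pass to the small setting. Here I would first check that smallness of the \(0\)-cells is preserved: if \(\CC\) is small, then the objects of \(\BE\dExt{\CC}\) form the set \(\bigsqcup_{A,C\inn\CC}\BE(C,A)\) and its morphisms are certain pairs of morphisms of \(\CC\), so \((\BE\dExt{\CC},\CX_{\BE})\) is a small exact category by \cref{prop:ECC-is-a-category}, and hence \(\updave_{0}\) lands in \(\exactcat\). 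As discussed in \cref{rem:not-2-categories} and \cref{cor:2-cats}, restricting to small categories removes the set-theoretic obstruction, so the collections of \(2\)-cells between fixed \(1\)-cells are genuine sets and the restriction of \(\updave\) is an honest \(2\)-functor \(\exang{n}\to\exactcat\).

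Next I would establish faithfulness on \(1\)-cells. Suppose \((\SF,\Gamma),(\SG,\Lambda)\colon(\CC,\BE,\fs)\to(\CC',\BE',\fs')\) are \(n\)-exangulated functors with \(\tensor[]{\SE}{_{(\SF,\Gamma)}}=\tensor[]{\SE}{_{(\SG,\Lambda)}}\). Evaluating on the object \(\tensor[_{X}]{0}{_{0}}\inn\BE(0,X)\) and using that \(\tensor[]{\SE}{_{(\SF,\Gamma)}}(\tensor[_{X}]{0}{_{0}})=\tensor[_{\SF X}]{0}{_{0}}\) (and similarly for \(\SG\)), equality of the two functors on objects forces \(\SF X=\SG X\) for every \(X\inn\CC\) by \cref{rem:equality-of-morphisms-of-extensions}. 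For a morphism \(x\colon X\to Y\) of \(\CC\), the induced pair \((x,0)\colon\tensor[_{X}]{0}{_{0}}\to\tensor[_{Y}]{0}{_{0}}\) is a morphism in \(\BE\dExt{\CC}\); since both functors respect morphisms over \(\SF\) and \(\SG\) respectively by \cref{thm:characterisation-of-n-exangulated-functors}, their equality gives \((\SF x,0)=(\SG x,0)\), whence \(\SF x=\SG x\) again by \cref{rem:equality-of-morphisms-of-extensions}. Thus \(\SF=\SG\), and evaluating \(\tensor[]{\SE}{_{(\SF,\Gamma)}}=\tensor[]{\SE}{_{(\SG,\Lambda)}}\) on an arbitrary \(\delta\inn\BE(C,A)\) yields \(\tensor[]{\Gamma}{_{(C,A)}}(\delta)=\tensor[]{\Lambda}{_{(C,A)}}(\delta)\), so \(\Gamma=\Lambda\) and \((\SF,\Gamma)=(\SG,\Lambda)\). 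Faithfulness on \(2\)-cells is then immediate, since the assignment \(\beth\mapsto\lan\beth\ran=\updave_{2}(\beth)\) is one half of the bijection of \cref{thm:characterisation-of-n-exangulated-natural-transformations} and is in particular injective.

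The failures of fullness I would deduce directly from the examples. For \(1\)-cells, \cref{example:not-full} exhibits, for any non-zero additive category equipped with its split \(n\)-exangulated structure, an exact functor \(\SE\colon\BE\dExt{\CC}\to\BE\dExt{\CC}\) with \(\SE\neq\tensor[]{\SE}{_{(\SF,\Gamma)}}\) for every \(n\)-exangulated endofunctor \((\SF,\Gamma)\); as \(\updave_{1}(\SF,\Gamma)=\tensor[]{\SE}{_{(\SF,\Gamma)}}\), this \(\SE\) lies outside the image of \(\updave_{1}\), so \(\updave\) is not full on \(1\)-cells. For \(2\)-cells, \cref{thm:characterisation-of-n-exangulated-natural-transformations} identifies the image of \(\updave_{2}\) with the \emph{balanced} natural transformations, while \cref{example:unbalanced-natural-transformation} produces a natural transformation between functors of the form \(\tensor[]{\SE}{_{(\SF,\Gamma)}}\) that is not balanced; hence it is not in the image of \(\updave_{2}\), and \(\updave\) is not full on \(2\)-cells.

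The only genuinely delicate point is the faithfulness on \(1\)-cells: one must exploit the full strength of the hypothesis that \(\tensor[]{\SE}{_{(\SF,\Gamma)}}\) and \(\tensor[]{\SE}{_{(\SG,\Lambda)}}\) are \emph{equal} functors, rather than merely both respecting morphisms, so that \cref{rem:equality-of-morphisms-of-extensions} actually pins down \(\SF\) and \(\SG\) on morphisms. This is precisely the distinction flagged in \cref{rem:subtle-remark} that one must take care not to conflate; once it is respected, the remaining verifications are routine.
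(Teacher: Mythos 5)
Your proposal is correct and takes essentially the same route as the paper: the \(2\)-functor claim is deduced from \cref{thm:2-functor} by restricting to small categories, faithfulness on \(1\)-cells is checked via the induced morphisms \((x,0)\colon \tensor[_{X}]{0}{_{0}}\to \tensor[_{Y}]{0}{_{0}}\) of extensions, faithfulness on \(2\)-cells follows from the bijection in \cref{thm:characterisation-of-n-exangulated-natural-transformations}, and the failures of fullness are witnessed exactly by \cref{example:not-full} and \cref{example:unbalanced-natural-transformation}. Your explicit verification that \(\updave_{0}\) preserves smallness is a detail the paper leaves implicit, but it does not alter the argument.
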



\section{Examples of \texorpdfstring{\(n\)}{n}-exangulated categories, functors and natural transformations}
\label{sec:examples-of-n-exangulated-categories-and-functors}

Let \(n\geq 1\) be an integer. 
We begin this section by recalling some known  classes of \(n\)-exangulated categories 
arising from extriangulated, \((n+2)\)-angulated and \(n\)-exact settings; see Examples~\ref{example:extriangulated-is-1-exangulated}, \ref{example:n+2-angulated-category-is-n-exangulated} and \ref{example:n-exact-category-is-n-exangulated}, respectively. 
In each of these cases, we discuss what it means for functors and natural transformations to respect the \(n\)-exangulated structure.
We then show that any additive category admits a ``smallest'' \(n\)-exangulated structure in \cref{example:split-n-exangulated-structure}.

We move on to considering \(n\)-exangulated functors for which the type of structure of the domain category \emph{differs} from that of the codomain category, providing examples of \(n\)-exangulated functors which are neither \((n+2)\)-angulated nor \(n\)-exact in general.  
Our characterisation in \cref{thm:characterisation-of-n-exangulated-functors} is applied to establish many of these examples.
In Examples~\ref{example:delta-functors} and 
\ref{example:n-exact-inside-nplus2-angulated}, 
we study structure-preserving functors from 
\(n\)-exact to \((n+2)\)-angulated categories, 
before the canonical functor from a Frobenius 
\(n\)-exangulated category to its \((n+2)\)-angulated stable category is shown to be \(n\)-exangulated in \cref{example:frobenius-n-exact-to-n-angulated-quotient}. 
Additionally, we demonstrate how the relative theory of \(n\)-exangulated categories 
can be used to equip a triangulated category with its pure-exact extriangulated structure; 
see \cref{example:extriangulated-functor-purity-CGTcase,example:rest-yoneda-extrian-func}. 
In doing so, we show that the restricted Yoneda embedding gives an example of 
an extriangulated functor which is neither exact nor triangulated in general.

Our first three examples 
each 
consists of two parts. 
In 
part (i) we focus on \(n\)-exan\-gu\-lat\-ed functors. In part (ii) we discuss what it means for natural transformations to be \(n\)-exan\-gu\-lat\-ed.

\begin{example}
\label{example:extriangulated-is-1-exangulated}
\emph{Extriangulated} categories were introduced by Nakaoka--Palu in \cite{NakaokaPalu-extriangulated-categories-hovey-twin-cotorsion-pairs-and-model-structures} as a simultaneous generalisation of triangulated and exact categories. 
Examples, which are neither triangulated nor exact in general, include extension-closed subcategories \cite[Rem.~2.18]{NakaokaPalu-extriangulated-categories-hovey-twin-cotorsion-pairs-and-model-structures} and certain ideal quotients  \cite[Prop.~3.30]{NakaokaPalu-extriangulated-categories-hovey-twin-cotorsion-pairs-and-model-structures} of triangulated categories. 
A category is extriangulated if and only if it is \(1\)-exangulated \cite[Prop.~4.3]{HerschendLiuNakaoka-n-exangulated-categories-I-definitions-and-fundamental-properties}. 
Thus, we obtain a plethora of categories with interesting \(n\)-exangulated structures for \(n=1\). 
\begin{enumerate}[label=\textup{(\roman*)}]
\item  
A \(1\)-exangulated functor is also called \emph{extriangulated} \cite[Def.~2.32]{Bennett-TennenhausShah-transport-of-structure-in-higher-homological-algebra}.
In \cref{example:delta-functors,example:rest-yoneda-extrian-func}, 
we exhibit extriangulated functors from work of Keller \cite{Keller-derived-categories-and-universal-problems} and Krause \cite{Krause-Smashing-subcategories}, respectively.

\item Following (i), by an \emph{extriangulated natural transformation} we refer to the case \(n=1\) in \cref{def:n-exangulated-natural-transformation}.
\emph{Morphisms of extriangulated functors} were introduced by Nakaoka--Ogawa--Sakai \cite[Def.~2.11(3)]{NakaokaOgawaSakai-localization-of-extriangulated-categories}. 
The equation \cite[(2.2)]{NakaokaOgawaSakai-localization-of-extriangulated-categories} defining such morphisms is precisely \eqref{def:n-exangulated-natural-transformation} in the case \(n=1\), and so this notion 
coincides with that of an extriangulated natural transformation. 
In \cref{example:delta-functors} we exhibit extriangulated natural transformations which arise in 
\cite{Keller-derived-categories-and-universal-problems}.
\end{enumerate}
\end{example}

In Examples \ref{example:n+2-angulated-category-is-n-exangulated} and \ref{example:n-exact-category-is-n-exangulated}, we use the following notation.

\begin{notn}
\label{notn:first_examples_notation}
Suppose that \((\CC,\BE,\fs)\) and \((\CC',\BE',\fs')\) are \(n\)-exangulated categories, and let \(\beth\colon\SF\Rightarrow\SG\) be a natural transformation of additive functors \(\CC\to\CC'\). 
For \(
\delta\inn\BE(C,A)
\) 
with \(\fs(\delta)= [X^{\combul}]\), 
we note that setting
\(\tensor*[]{\beth}{_{X}^{i}} 
    \deff \tensor*[]{\beth}{_{X^{i}}}
\) for each \(i\) defines a morphism of complexes 
\(
\tensor*[]{\beth}{_{X}^{\combul}} \colon \SF_{\com}X^{\combul}\to \SG_{\com}X^{\combul}
\) by the naturality of \(\beth\). 
In particular, we have
\(\tensor*[]{\beth}{_{X}^{0}} 
    = \tensor*[]{\beth}{_{A}}
\)
and 
\(\tensor*[]{\beth}{_{X}^{n+1}} 
    = \tensor*[]{\beth}{_{C}}
\). 
\end{notn}

\begin{example}
\label{example:n+2-angulated-category-is-n-exangulated}

An \emph{\((n+2)\)-angulated} category (see \cite[Def.~2.1]{GeissKellerOppermann-n-angulated-categories}) is the higher homological analogue of a triangulated category. 
Any \((n+2)\)-angulated category \((\CC,\tensor[]{\sus}{_{n}},\pent)\) 
has the structure of an \(n\)-exangulated category \((\CC,\BE_{\pent},\fs_{\pent})\); see \cite[Sec.\ 4.2]{HerschendLiuNakaoka-n-exangulated-categories-I-definitions-and-fundamental-properties}. In this case, we have 
\(
\BE_{\pent}(C,A)\deff \CC(C,\tensor[]{\sus}{_{n}} A)
\) 
and \(\fs_{\pent}(\delta)\deff [X^{\combul}]\) whenever \(\delta\inn\BE_{\pent}(X^{n+1},X^{0})= \CC(X^{n+1},\tensor[]{\sus}{_{n}} X^{0})\) completes to a distinguished \((n+2)\)-\emph{angle}
\begin{equation}
\label{eqn:nplus2-angle-in-nplus2-angulated-example}
	\begin{tikzcd}[column sep=0.6cm]
	X^{0} \arrow{r}{\tensor*[]{d}{_{X}^{0}}}& X^{1} \arrow{r}& \cdots \arrow{r}& X^{n}\arrow{r}{\tensor*[]{d}{_{X}^{n}}}& X^{n+1}\arrow{r}{\delta}& \tensor[]{\sus}{_{n}} X^{0}.
	\end{tikzcd}
\end{equation}

\begin{enumerate}[label=\textup{(\roman*)}]
    \item For \((n+2)\)-angulated categories \((\CC,\tensor[]{\sus}{_{n}},\pent)\) and \((\CC',\tensor*[]{\sus}{^{\prime}_{n}},\pent')\), the notion of an \((n+2)\)-angulated functor \(\SF\colon \CC\to\CC'\) was introduced by Bergh--Thaule 
\cite[Sec.\ 4]{BerghThaule-the-grothendieck-group-of-an-n-angulated-category}. 
The functor \(\SF\) is \emph{\((n+2)\)-angulated} if it is additive and comes
equipped with a natural isomorphism \(\Theta\colon \SF \tensor[]{\sus}{_{n}}
\Rightarrow \tensor*[]{\sus}{^{\prime}_{n}}\SF\) such that for any distinguished \((n+2)\)-angle of the form \eqref{eqn:nplus2-angle-in-nplus2-angulated-example} one obtains a distinguished \((n+2)\)-angle 
\begin{center}
    \(
	\begin{tikzcd}[column sep=1cm]
	\SF X^{0} \arrow{r}{\SF\tensor*[]{d}{_{X}^{0}}}& \SF X^{1} \arrow{r}& \cdots \arrow{r}& \SF X^{n}\arrow{r}{\SF\tensor*[]{d}{_{X}^{n}}}& \SF X^{n+1}\arrow{rr}{\Theta_{X_{0}}\circ\SF\delta}&& \tensor*[]{\sus}{^{\prime}_{n}}\SF X^{0}.
	\end{tikzcd}
	\)
\end{center}

For \(\SF\) and \(\Theta\) as above, setting \(\tensor[]{\Gamma}{_{(X^{n+1},X^0)}}(\delta) = \Theta_{X_{0}}\circ\SF\delta\) defines a natural transformation \(\Gamma\colon \CC(-,\tensor[]{\sus}{_{n}}-)\Rightarrow \CC'(\SF-,\tensor*[]{\sus}{^{\prime}_{n}}\SF-)\). 
In \cite[Thm.~2.33]{Bennett-TennenhausShah-transport-of-structure-in-higher-homological-algebra}, it is shown that 
the existence of 
an \((n+2)\)-angulated functor \(\SF\) with natural isomorphism \(\Theta\) is equivalent to the existence of an \(n\)-exangulated functor \((\SF,\Gamma)\) from
\((\CC,\BE_{\pent},\fs_{\pent})\) to \((\CC',\BE_{\pent'},\fs_{\pent'})\).
\item 
As noted in \cite[Rem.~2.12]{NakaokaOgawaSakai-localization-of-extriangulated-categories}, the notion of an extriangulated natural transformation is equivalent to the definition of a \emph{morphism of triangulated functors} in the sense of Kashiwara--Schapira \cite[Def.~10.1.9(ii)]{Kashiwara-Schapira-Categories-and-sheaves}, whenever the extriangulated categories involved correspond to triangulated categories.

Keeping the notation from above, suppose that \((\SF,\Gamma)\) and \((\SG,\Lambda)\) are \(n\)-exangulated functors \((\CC,\BE_{\pent},\fs_{\pent}) \to (\CC',\BE_{\pent'},\fs_{\pent'})\) corresponding to \((n+2)\)-angulated functors. Note that a natural transformation \(\beth\colon\SF \Rightarrow\SG\) satisfies    
\eqref{eqn:n-exangulated-natural-transformation-property} 
if and only if 
each 
square 
\begin{center}
\(
\begin{tikzcd}[column sep=1.3cm, row sep=0.7cm]
\SF X^{n+1} \arrow{d}[swap]{\beth_{X^{n+1}}} \arrow{rr}{\tensor[]{\Gamma}{_{(X^{n+1},X^0)}}(\delta)} & & \sus'_{n}\SF X^0 \arrow{d}{\sus'_{n}\beth_{X^0}}\\
\SG X^{n+1} \arrow{rr}[swap]{\tensor[]{\Lambda}{_{(X^{n+1},X^0)}}(\delta)} & & \sus'_{n}\SG X^0
\end{tikzcd}
\)
\end{center}
commutes, in which case the sequence 
\(
(\tensor*[]{\beth}{_{X}^{1}},\dots,\tensor*[]{\beth}{_{X}^{n}})
\) 
defines a \emph{morphism of} \((n+2)\)-\(\sus'_{n}\)-\emph{sequences} in \(\CC'\) in the sense of 
\cite[Def.~2.1]{GeissKellerOppermann-n-angulated-categories}. 
\end{enumerate}
\end{example}

Parallel to the \((n+2)\)-angulated story is the \(n\)-exact one.

\begin{example}
\label{example:n-exact-category-is-n-exangulated}
Higher versions of abelian and exact categories 
were introduced in \cite{Jasso-n-abelian-and-n-exact-categories}. 
Analogously to the classical theory, every \emph{\(n\)-abelian} category carries 
an \emph{\(n\)-exact} structure 
\cite[Thm.~4.4]{Jasso-n-abelian-and-n-exact-categories}. 
Any skeletally small \(n\)-exact category \((\CC,\CX)\), where \(\CX\) is the collection of \emph{admissible} \(n\)-exact sequences, 
gives rise to an \(n\)-exangulated category \((\CC,\BE_{\CX},\fs_{\CX})\); see \cite[Sec.\ 4.3]{HerschendLiuNakaoka-n-exangulated-categories-I-definitions-and-fundamental-properties}. 
In this case, we have
\(
\BE_{\CX}(X^{n+1},X^{0}) \deff 
\Set{ [X^{\combul}] 
| X^{\combul}\inn\CX }
\), 
where \([X^{\combul}]\) is the equivalence class with respect to the homotopy relation \(\sim\) described in \cref{sec:n-exangulated-categories}. 

\begin{enumerate}[label=\textup{(\roman*)}]

    \item For \(n\)-exact categories \((\CC,\CX)\) and \((\CC',\CX')\), an additive functor \(\SF\colon \CC\to\CC'\) is called \mbox{\emph{\(n\)-exact}} provided \(\SF_{\com} X^{\combul}\inn\CX'\) whenever \(X^{\combul}\inn\CX\)  \cite[Def.~2.18]{Bennett-TennenhausShah-transport-of-structure-in-higher-homological-algebra}.  
    Assuming that the categories involved are skeletally small and considering the associated \(n\)-exangulated structures, the existence of an \(n\)-exact functor \(\SF\) is equivalent to the existence of an \(n\)-exangulated functor \((\SF,\Gamma)\) from \((\CC,\BE_{\CX},\fs_{\CX})\) to \((\CC',\BE_{\CX'},\fs_{\CX'})\); see \cite[Thm.~2.34]{Bennett-TennenhausShah-transport-of-structure-in-higher-homological-algebra}. 
    In this case, the natural transformation \(\Gamma\) is uniquely defined by 
    \(\tensor[]{\Gamma}{_{(X^{n+1},X^{0})}}([X^{\combul}]) = [\SF_{\com} X^{\combul}]\).

    \item Keeping the notation from above, suppose that \(\SF\) and \(\SG\) are \(n\)-exact functors \(\CC\to\CC'\) and consider a natural transformation \(\beth\colon\SF\Rightarrow\SG\).
    As noted in 
    \cite[Rem.~2.12]{NakaokaOgawaSakai-localization-of-extriangulated-categories},
    equation \eqref{eqn:n-exangulated-natural-transformation-property} is automatically satisfied for the corresponding \(n\)-exangulated functors in the case \(n=1\). We now show that this holds for any integer \(n\geq 1\). 

Consider the equivalence class \(\delta = [X^{\combul}]\inn\BE_{\CX}(C,A)\) of an admissible \(n\)-exact sequence \(X^{\combul} \inn \CX\). As \(\SF\) and \( \SG\) are \(n\)-exact, we have \(\SF_{\com}X^{\combul}, \SG_{\com}X^{\combul}\inn\CX'\), so \(
\tensor*[]{\beth}{_{X}^{\combul}}
        \colon 
    \SF_{\com}X^{\combul}
        \to
    \SG_{\com}X^{\combul}
\) 
is a morphism of admissible \(n\)-exact sequences. By the existence of \(n\)-pushout diagrams in the \(n\)-exact category \((\CC',\CX')\), we obtain a morphism 
\(
p^{\combul}
    \colon \SF_{\com} X^{\combul} \to (\tensor[]{\beth}{_{A}}\tensor[]{)}{_{\BE_{\CX'}}} \SF_{\com} X^{\combul}
\) with 
\(p^{0} = \tensor[]{\beth}{_{A}}\) 
and 
\(p^{n+1} = \iden{\SF C}\); see  \cite[Def.~4.2(E2) and Prop.~4.8]{Jasso-n-abelian-and-n-exact-categories}. Dually, there is also a morphism 
\(
q^{\combul}
    \colon (\tensor[]{\beth}{_{C}}\tensor[]{)}{^{\BE_{\CX'}}}\SG_{\com} X^{\combul}
    \to \SG_{\com} X^{\combul}
\) with 
\(q^{0} = \iden{\SG A}\) 
and 
\(q^{n+1} = \tensor[]{\beth}{_{C}}\). By \cite[Prop.~4.9]{Jasso-n-abelian-and-n-exact-categories}, there exists a morphism
\(
l^{\combul}
    \colon (\tensor[]{\beth}{_{A}}\tensor[]{)}{_{\BE_{\CX'}}}\SF_{\com} X^{\combul} \to \SG_{\com} X^{\combul}
\) satisfying 
\(
l^{\combul}p^{\combul} \sim \tensor*[]{\beth}{_{X}^{\combul}}
\), 
\(l^{0} = \iden{\SG A}\) 
and 
\(l^{n+1} = \tensor[]{\beth}{_{C}}\). 
On the other hand, the dual of \cite[Prop.~4.9]{Jasso-n-abelian-and-n-exact-categories} yields a morphism 
\(
m^{\combul}
    \colon (\tensor[]{\beth}{_{A}}\tensor[]{)}{_{\BE_{\CX'}}}\SF_{\com} X^{\combul} \to (\tensor[]{\beth}{_{C}}\tensor[]{)}{^{\BE_{\CX'}}}\SG_{\com} X^{\combul}
\) with
\(
q^{\combul}m^{\combul} \sim l^{\combul}
\), 
\(m^{0} = l^{0} = \iden{\SG A}\) 
and 
\(m^{n+1} = \iden{\SF C}\).
Note that \(m^{\combul}\) is an \emph{equivalence} of \(n\)-exact sequences in the sense of \cite[Def.~2.9]{Jasso-n-abelian-and-n-exact-categories}. This implies that
\(
(\tensor[]{\beth}{_{A}}\tensor[]{)}{_{\BE_{\CX'}}} [ \SF_{\com} X^{\combul} ]
    = [(\tensor[]{\beth}{_{A}}\tensor[]{)}{_{\BE_{\CX'}}}\SF_{\com} X^{\combul}] 
    = [(\tensor[]{\beth}{_{C}}\tensor[]{)}{^{\BE_{\CX'}}}\SG_{\com} X^{\combul}]
    = (\tensor[]{\beth}{_{C}}\tensor[]{)}{^{\BE_{\CX'}}} [ \SG_{\com} X^{\combul} ]
\)
by \cite[Prop.~4.10]{Jasso-n-abelian-and-n-exact-categories}, which verifies \eqref{eqn:n-exangulated-natural-transformation-property}.
    
\end{enumerate}
\end{example}

Our next example shows that any additive category \(\CC\) carries a smallest \(n\)-exangulated structure \((\CC,\BE_{n\text{-split}},\fs_{n\text{-split}})\) arising from an \(n\)-exact structure for each integer \(n \geq 1\). 
In particular, we have that  \((\CC,\BE_{n\text{-split}},\fs_{n\text{-split}})\) is an \(n\)-exangulated subcategory, in the sense of \cite[Def.~3.7]{Haugland-the-grothendieck-group-of-an-n-exangulated-category}, of any \(n\)-exangulated structure \((\CC,\BE,\fs)\) we impose on \(\CC\). 
Recall that an \emph{\(n\)-exangulated subcategory} of \((\CC,\BE,\fs)\) is an isomorphism-closed subcategory \(\CA\) of \(\CC\) with an \(n\)-exangulated structure \((\CA,\BE',\fs')\) for which the inclusion \(\SF\) of \(\CA\) in \(\CC\) gives an \(n\)-exangulated functor 
\((\SF,\Gamma)\colon(\CA,\BE',\fs')\to(\CC,\BE,\fs)\) where each  \(\tensor[]{\Gamma}{_{(C,A)}}\) is an inclusion of abelian groups.

\begin{example}
\label{example:split-n-exangulated-structure}
By \cite[Rem.~4.7]{Jasso-n-abelian-and-n-exact-categories}, any additive category \(\CC\) admits a smallest \(n\)-exact structure \((\CC,\CX_{n\text{-split}})\), where
\(\CX_{n\text{-split}}\) denotes the class of all
contractible 
\(n\)-exact sequences. 
Recall that a sequence is \emph{contractible} if it is homotopy equivalent to the zero complex. We call \(\CX_{n\text{-split}}\) the \emph{split \(n\)-exact structure} of \(\CC\).  
Notice that for all \(A,C\inn\CC\), any contractible \(n\)-exact sequence starting in \(A\) and ending in \(C\) is homotopic to
\begin{equation}\label{eqn:split-n-exact-seq}
\begin{tikzcd}[column sep=0.9cm]
    A\arrow{r}{\iden{A}}
        &A\arrow{r}
        &0\arrow{r}
        &\cdots \arrow{r}
        &0 \arrow{r}
        & C \arrow{r}{\iden{C}}
        & C.
\end{tikzcd}
\end{equation}
Note that in the case $n=1$, the sequence \eqref{eqn:split-n-exact-seq} is of the form 
$
\begin{tikzcd}[column sep=1.2cm, ampersand replacement=\&]
    A\arrow{r}{\begin{psmallmatrix}\iden{A} \\ 0\end{psmallmatrix}}
		\& A\oplus C \arrow{r}{\begin{psmallmatrix}0 & \iden{C}\end{psmallmatrix}}
		\& C.
\end{tikzcd}
$
As seen in \cref{example:n-exact-category-is-n-exangulated}, the \(n\)-exact category \((\CC,\CX_{n\text{-split}})\) yields an \(n\)-exangulated structure \((\CC,\BE_{n\text{-split}},\fs_{n\text{-split}})\), which we call the \emph{split \(n\)-exangulated structure} of \(\CC\). 
Note that no set-theoretic issues arise as 
\(\BE_{n\text{-split}}(C,A) 
    = \{ \tensor[_{A}]{0}{_{C}} \}
\) 
is the trivial abelian group for all \(A,C\inn\CC\).

We claim that this is the smallest \(n\)-exangulated structure on \(\CC\). To see this, suppose that \((\CC,\BE,\fs)\) is an \(n\)-exangulated category. Consider the identity functor \(\idfunc{\CC}\colon\CC\to\CC\) and the natural transformation 
\(
\Gamma 
    \colon 
\BE_{n\text{-split}} 
    \Rightarrow 
    \BE\) 
given by 
\(\tensor[]{\Gamma}{_{(C,A)}} (\tensor[_{A}]{0}{_{C}})
    = \tensor[_{A}]{0}{_{C}}\). 
Note that, by \ref{R2} and 
\cite[Prop.~3.3]{HerschendLiuNakaoka-n-exangulated-categories-I-definitions-and-fundamental-properties}, 
we have 
\[
\fs_{n\text{-split}}(\tensor[_{A}]{0}{_{C}}) 
    =\fs(\tensor[_{A}]{0}{_{C}})
    =[\begin{tikzcd}[column sep=0.9cm]
    A\arrow{r}{\iden{A}}
        &A\arrow{r}
        &0\arrow{r}
        &\cdots \arrow{r}
        &0 \arrow{r}
        & C \arrow{r}{\iden{C}}
        & C
    \end{tikzcd}].
\]
Consequently, the pair \((\idfunc{\CC},\Gamma) \colon (\CC,\BE_{n\text{-split}},\fs_{n\text{-split}}) \to (\CC,\BE,\fs)\) is an \(n\)-exangulated functor. As \(\tensor[]{\Gamma}{_{(C,A)}}\) is an inclusion for all \(A,C \inn \CC\), this implies that \((\CC,\BE_{n\text{-split}},\fs_{n\text{-split}})\) is an \(n\)-exangulated subcategory of \((\CC,\BE,\fs)\). 
In particular, the split \(n\)-exangulated structure is the smallest \mbox{\(n\)-exangulated} structure we can impose on \(\CC\).
\end{example}

In \cref{example:delta-functors} we describe extriangulated functors from an exact or abelian category to a triangulated category. Some of these examples are classical and others have been of very recent interest. 
The authors would like to thank Peter J{\o{}}rgensen for pointing out Linckelmann \cite[Rem.~6.8]{Linckelmann-on-abelian-subcategories-of-triangulated-categories}, which is used in (ii) below.

\begin{example}
\label{example:delta-functors}
Structure-preserving functors from exact to triangulated categories, or more generally from exact to \emph{suspended} categories in the sense of 
Keller--Vossieck \cite{KellerVossieck-sous-les-categories-derivees}, have been considered in the literature previously. 
Such functors are called \emph{\(\delta\)-functors} 
in 
\cite[pp.\ 701--702]{Keller-derived-categories-and-universal-problems}. 
Let \(\CA\) be an exact category for which \(\Ext^{1}_{\CA}(C,A)\) is a set for all \(A,C\inn\CA\). 
Consider a triangulated category \(\CC\) with suspension functor \(\sus\).
Suppose that \(\SF\colon \CA\to\CC\) is an additive functor and that \(\Gamma\colon \Ext^{1}_{\CA}(-,-) \Rightarrow \CC(\SF-,\sus\SF-)\) is a natural transformation. 
The pair \((\SF,\Gamma)\) is called a \emph{\(\delta\)-functor} if 
each conflation 
\begin{equation}
\label{eqn:delta-functor-exact-sequence}
\begin{tikzcd}
0 \arrow{r}
    & A \arrow{r}{f}
    & B \arrow{r}{g}
    & C \arrow{r}
    &0
\end{tikzcd}
\end{equation}
in \(\CA\) is sent to a distinguished triangle 
in \(\CC\) 
of the form
\begin{equation}
\label{eqn:delta-functor-distinguished-triangle}
\begin{tikzcd}[column sep=1.6cm]
\SF A\arrow{r}{\SF f}
&\SF B\arrow{r}{\SF g}
&\SF C\arrow{r}{\tensor[]{\Gamma}{_{(C,A)}}(\rho)}
&\sus\SF A,
\end{tikzcd}
\end{equation}
where \(\rho\) denotes the equivalence class of the conflation (\ref{eqn:delta-functor-exact-sequence}).
Before translating the language of \(\delta\)-functors to that of extriangulated functors, we recall some examples. 
\begin{enumerate}[label=\textup{(\roman*)}]
    
    \item \label{item:canonical-functor-is-delta} 
    Let \(\CA\) 
    be a skeletally small exact category and  \(\CC\) 
    its \emph{derived} category 
    with suspension functor \(\sus\); 
    see Keller \cite[p.\ 692]{Keller-derived-categories-and-their-uses}. 
    If \(\SF \colon \CA \to \CC\) denotes the canonical inclusion, then there is a natural transformation 
    \(\Gamma\colon \Ext^{1}_{\CA}(-,-) \Rightarrow\CC(\SF-,\sus\SF-)\) such that the equivalence class of a conflation 
    \eqref{eqn:delta-functor-exact-sequence} is sent to a distinguished triangle \eqref{eqn:delta-functor-distinguished-triangle}; see \cite[pp.\ 701--702]{Keller-derived-categories-and-universal-problems}. 
The pair \((\SF,\Gamma)\) is a \(\delta\)-functor \(\CA\to\CC\). 
    
    \item\label{item:distinguished-abelian-is-delta} 
    Suppose \(\CC\) 
    is a triangulated category with suspension functor \(\sus\). 
    Let \(\SF\colon\CA\to\CC\) denote the 
    inclusion 
    of an 
    isomorphism-closed additive subcategory \(\CA\sse\CC\). 
    Assume that \(\CA\) is 
    a \emph{distinguished abelian subcategory}
    in the sense of  \cite[Def.~1.1]{Linckelmann-on-abelian-subcategories-of-triangulated-categories}. 
    This means that \(\CA\) is abelian and, given a short exact sequence \eqref{eqn:delta-functor-exact-sequence} in \(\CA\), there exists a morphism \(\tensor[]{\Gamma}{_{(C,A)}}(\rho) \colon \SF C \to \sus\SF A\) in \(\CC\) such that
\eqref{eqn:delta-functor-distinguished-triangle} 
is a distinguished triangle in \(\CC\). 
A morphism of short exact sequences in the abelian category \(\CA\) uniquely determines a morphism of distinguished triangles in the triangulated category \(\CC\); see \cite[Rem.~6.8]{Linckelmann-on-abelian-subcategories-of-triangulated-categories}. 
This implies that 
\mbox{\(
\Gamma=\{\tensor[]{\Gamma}{_{(C,A)}}\tensor[]{\}}{_{(C,A)\inn\CA^{\scalebox{0.7}{\op}}\times\CA}}
    \colon \Ext^{1}_{\CA}(-,-) \Rightarrow \CC(\SF-,\sus\SF-)
\)}
is a natural transformation. 
The pair \((\SF,\Gamma)\) is hence a \(\delta\)-functor \(\CA\to\CC\). 
Similarly, the inclusions of \emph{proper abelian subcategories} (see J{\o{}}rgensen \cite[Def.~1.2]{Jorgensen-abelian-subcategories-of-triangulated-categories-induced-by-simple-minded-systems}) and \emph{admissible abelian subcategories} 
(see Be{\u{\i}}linson--Bernstein--Deligne \cite[Def.~1.2.5]{BeilinsonBernsteinDeligne-perverse-sheaves}) into their ambient triangulated categories give rise to \(\delta\)-functors; see \cite[Rem.~1.3]{Jorgensen-abelian-subcategories-of-triangulated-categories-induced-by-simple-minded-systems}.
\end{enumerate}
Consider the extriangulated categories that arise from \(\CA\) and \(\CC\) being exact and triangulated, respectively. 
Following \cref{example:n+2-angulated-category-is-n-exangulated} and \cref{example:n-exact-category-is-n-exangulated} with \(n=1\), 
we obtain extriangulated categories \((\CA,\BE,\fs)\) and \((\CC,\BE', \fs')\) with \( \BE\deff\Ext^{1}_{\CA}(-,-)\) and 
\(\BE'\deff \CC(-,\sus-)\). 
The pair \((\SF,\Gamma)\) is a \(\delta\)-functor  \(\CA\to\CC\) if and only if it is an extriangulated functor \((\CA,\BE,\fs)\to(\CC,\BE', \fs')\).

Furthermore, \emph{morphisms of \(\delta\)-functors} 
are also defined and studied in \cite[p.~702]{Keller-derived-categories-and-universal-problems}. 
If \((\SF,\Gamma)\) and \((\SG,\Lambda)\) are \(\delta\)-functors \(\CA\to\CC\), 
then a morphism 
\((\SF,\Gamma) \Rightarrow (\SG,\Lambda)\) 
is given by a natural transformation 
\(\beth\colon \SF \Rightarrow \SG\) 
satisfying 
\(
(\sus\tensor[]{\beth}{_{A}})\circ \tensor[]{\Gamma}{_{(C,A)}}(\rho)
    = \tensor*[]{\Lambda}{_{(C,A)}}(\rho)\circ \tensor[]{\beth}{_{C}}
\) 
for each extension 
\(\rho\inn \BE(C,A)\).
This defining condition is precisely \eqref{eqn:n-exangulated-natural-transformation-property} in the context of this example. Thus, the notion of a morphism of \(\delta\)-functors coincides with that of 
an extriangulated natural transformation. 
\end{example}

\begin{rem}
The prototypical example of a \(\delta\)-functor is the canonical embedding of an exact category into its derived category; see Example \ref{example:delta-functors}\ref{item:canonical-functor-is-delta}.
Morphisms of \(\delta\)-functors were studied in \cite{Keller-derived-categories-and-universal-problems} in the search for some universal property of the derived category. 
Thus, interesting examples of \(n\)-exangulated natural transformations with \(n>1\) may arise from a generalisation of this
to a higher-dimensional setting. 
This would require the construction of a derived category of an \(n\)-exact category---a problem for which there seems so far to be no obvious solution; see
Jasso--K{\"{u}}lshammer \cite{JassoKulshammer-the-naive-approach-for-constructing-the-derived-category-of-a-d-abelian-category-fails}.
\end{rem}

Although \(\delta\)-functors provide formal language 
to express what it means for functors to send conflations to distinguished triangles in a functorial way, 
it has two apparent limitations. 
First, in its current form this notion cannot be used 
in higher homological algebra. Second, a \(\delta\)-functor must go from an exact category to a triangulated (or suspended) category, but not vice versa. 
The language of \(n\)-exangulated functors addresses both these limitations, as we will see across Examples~\ref{example:n-exact-inside-nplus2-angulated}, 
\ref{example:frobenius-n-exact-to-n-angulated-quotient} 
and 
\ref{example:rest-yoneda-extrian-func}.

In the next example, we show how recent work of Klapproth \cite{Klapproth-n-exact-categories-arising-from-nplus2-angulated-categories} (see also \cite{HeZhou-n-exact-categories-arising-from-n-exangulated-categories, Zhou-n-extension-closed-subcategories-of-nplus2-angulated-categories}) produces examples of \(n\)-exangulated functors from an \(n\)-exact category into an ambient \((n+2)\)-angulated category. 
This also gives more examples of \(n\)-exangulated subcategories.

\begin{example}
\label{example:n-exact-inside-nplus2-angulated}
Let \((\CC,\tensor[]{\sus}{_{n}},\pent)\) be a Krull--Schmidt \((n+2)\)-angulated category 
and consider the \(n\)-exangulated structure  
\((\CC,\BE_{\pent},\fs_{\pent})\) described in 
\cref{example:n+2-angulated-category-is-n-exangulated}. 
Let \(\CA\) be a 
subcategory of \(\CC\) which is closed under direct sums and summands, and \emph{closed under \(n\)-extensions}, meaning that 
for all \(A,C\inn\CA\) and each \(\delta\inn\CA(C,\tensor[]{\sus}{_{n}}A)\) there is a distinguished \((n+2)\)-angle
\begin{equation}
\label{eqn:Klapproth-nplus2-angle}
	\begin{tikzcd}
	A \arrow{r}{\tensor*[]{d}{_{X}^{0}}}& X^{1} \arrow{r}& \cdots \arrow{r}& X^{n}\arrow{r}{\tensor*[]{d}{_{X}^{n}}}& C\arrow{r}{\delta}& \tensor[]{\sus}{_{n}} A
	\end{tikzcd}
\end{equation}
in \(\CC\) 
with \(X^{i}\inn\CA\) for \(i=1,\ldots,n\); see \cite[Def.~1.1]{Klapproth-n-exact-categories-arising-from-nplus2-angulated-categories}. 
This also 
implies \(\CA\) is an \emph{extension closed} subcategory of 
\((\CC,\BE_{\pent},\fs_{\pent})\)
in the sense of \cite[Def.~4.1]{HerschendLiuNakaoka-n-exangulated-categories-II}.

Suppose, moreover, that \(\CA(\tensor[]{\sus}{_{n}}C,A)=0\) for any \(A,C\inn\CA\). 
Following \cite[Sec.\ 3]{Klapproth-n-exact-categories-arising-from-nplus2-angulated-categories}, an \(\CA\)-\emph{conflation} is a complex 
\(
X^{\combul}
    \colon\;
    \begin{tikzcd}[column sep=0.6cm]
	A \arrow{r}
	& X^{1} \arrow{r}
	& \cdots \arrow{r}
	& X^{n}
	\arrow{r}
	& C
	\end{tikzcd}
\)
that forms part of a distinguished \((n+2)\)-angle \eqref{eqn:Klapproth-nplus2-angle}. 
Let \(\BE_{\CA}\deff\CA(-,\tensor[]{\sus}{_{n}}-)\) be the restriction of \(\BE_{\pent}\) to \(\CA^{\op}\times \CA\). 
Restricting 
\(\fs_{\pent}\) 
defines an exact realisation \(\fs_{\CA}\) of \(\BE_{\CA}\) by \cite[Prop.~4.2(1)]{HerschendLiuNakaoka-n-exangulated-categories-II}. 
In an \(\CA\)-conflation \(X^{\combul}\) as above, the morphism \(\tensor*[]{d}{_{X}^{0}}\) is called an \(\CA\)-\emph{inflation} and \(\tensor*[]{d}{_{X}^{n+1}}\) is called an \(\CA\)-\emph{deflation}. 
By \cite[Lem.~3.8]{Klapproth-n-exact-categories-arising-from-nplus2-angulated-categories} and its dual, we have that both \(\CA\)-inflations and \(\CA\)-deflations are closed under composition.  
This implies that 
\((\CA,\BE_{\CA},\fs_{\CA})\) is an \(n\)-exangulated category by \cite[Prop.~4.2(2)]{HerschendLiuNakaoka-n-exangulated-categories-II}. 
Furthermore, it is an \(n\)-exangulated subcategory of \((\CC,\BE_{\pent},\fs_{\pent})\); see \cite[Exam.~3.8(1)]{Haugland-the-grothendieck-group-of-an-n-exangulated-category}.

We denote the collection of all 
\(\CA\)-conflations by \(\CX_{\CA}\), each member of which is an \(n\)-exact sequence by \cite[Lem.~3.3]{Klapproth-n-exact-categories-arising-from-nplus2-angulated-categories}. 
It follows from \cite[Thm.~I(1)]{Klapproth-n-exact-categories-arising-from-nplus2-angulated-categories} that the pair \((\CA,\CX_{\CA})\) is an \(n\)-exact category. 
For \(A,C\inn\CA\), there is an abelian group \(\yExtn{(\CA,\CX_{\CA})}{n}(C,A)\) of equivalence classes \([X^{\combul}]\) of \(\CA\)-conflations \(X^{\combul}\inn\CX_{\CA}\) with \(A=X^{0}\) and \(C=X^{n+1}\). Note that 
no set-theoretic problems arise here, since the size of \(\yExtn{(\CA,\CX_{\CA})}{n}(C,A)\) is bounded by the size of the set \(\CA(C,\tensor[]{\sus}{_{n}}A)\). 
Hence, this gives 
a biadditive functor \(\BE\deff\yExtn{(\CA,\CX_{\CA})}{n}\colon \CA^{\op}\times\CA \to \Ab\) and an \(n\)-exangulated category 
\((\CA,\BE,\fs)\) as in \cref{example:n-exact-category-is-n-exangulated}. 
By \cite[Thm.~I(2)]{Klapproth-n-exact-categories-arising-from-nplus2-angulated-categories}, there is a natural isomorphism 
\(
\Gamma
    \colon
    \BE
        \Rightarrow 
    \BE_{\CA}
\) 
given by 
\(
\Gamma([X^{\combul}]) 
    = \delta
\), 
where \(X^{\combul}\) is part of \eqref{eqn:Klapproth-nplus2-angle}. 
Thus, the pair 
\(
(\iden{\CA},\Gamma)
    \colon (\CA,\BE,\fs)
        \to 
    (\CA,\BE_{\CA},\fs_{\CA})
\) 
is an \(n\)-exangulated equivalence by \cref{prop:characterisation-of-n-exangulated-equivalence}. 
\end{example}

\emph{Frobenius exact categories} are studied in Happel \cite[Sec.\ I.2]{Happel-triangulated-cats-in-rep-theory}, and their higher analogues were introduced in \cite[Sec.\ 5]{Jasso-n-abelian-and-n-exact-categories}. 
In these setups, the quotient functor from a Frobenius exact (resp.\ \(n\)-exact) category to its stable category sends admissible exact (resp.\ \(n\)-exact) sequences to distinguished triangles (resp.\ \((n+2)\)-angles). 
In \cref{example:frobenius-n-exact-to-n-angulated-quotient} we follow the terminology introduced by Liu--Zhou \cite[Def.\ 3.2]{LiuZhou-frobenius-n-exangulated-categories}, and we show that these aforementioned quotient functors are instances of extriangulated (resp.\ \(n\)-exan\-gu\-lat\-ed) functors. 

\begin{example}
\label{example:frobenius-n-exact-to-n-angulated-quotient}
Let \((\CC,\BE,\fs)\) be an \(n\)-exangulated category. An object \(I\inn\CC\) is called \emph{\(\BE\)-injective} if for each distinguished \(n\)-exangle 
\(\lan X^{\combul},\delta\ran\), which is depicted as
\begin{equation}
\label{eqn:frobenius-delta-exangle}
\begin{tikzcd}
X^{0} \arrow{r}{\tensor*[]{d}{_{X}^{0}}}& X^{1} \arrow{r}{\tensor*[]{d}{_{X}^{1}}}& \cdots \arrow{r}{\tensor*[]{d}{_{X}^{n-1}}}&  X^{n} \arrow{r}{\tensor*[]{d}{_{X}^{n}}}& X^{n+1} \arrow[dashed]{r}{\delta}&{},
\end{tikzcd}
\end{equation}
and for each \(x\inn\CC(X^{0},I)\), there exists \(y\inn\CC(X^{1},I)\) such that \(yd_{X}^{0}=x\). 
The category \(\CC\) is said to have \emph{enough \(\BE\)-injectives} if for any \(X^{0}\inn\CC\), there is a distinguished \(n\)-exangle 
\begin{equation}
\label{eqn:frobenius-injectives-exangle}
\begin{tikzcd}
X^{0} \arrow{r}{}& I^{1} \arrow{r}{}& \cdots \arrow{r}{}&  I^{n} \arrow{r}{}& Z \arrow[dashed]{r}{\tensor[]{\delta}{_{X^{0}}}}&{},
\end{tikzcd}
\end{equation}
where \(I^{i}\) is \(\BE\)-injective 
for \(1\leq i\leq n\). 
Dually, one defines what it means for an object of \(\CC\)  to be \emph{\(\BE\)-projective} and for the category \(\CC\) to \emph{have enough \(\BE\)-projectives}. 
If \(\CC\) has enough \(\BE\)-projectives and enough \(\BE\)-injectives, and if an object in \(\CC\) is \(\BE\)-projective if and only if it is \(\BE\)-injective, then we say that \((\CC,\BE,\fs)\) is a \emph{Frobenius} \(n\)-exangulated category.

Let \((\CC,\BE,\fs)\) be Frobenius \(n\)-exangulated. 
Denote by \(\ol{\CC}\) the stable category \(\CC / \CI\) in the sense of \cite[p.\ 169]{LiuZhou-frobenius-n-exangulated-categories}, where \(\CI\) is the 
subcategory of \(\BE\)-projective-injectives. 
Consider the canonical quotient functor \(\SQ\colon \CC \to \ol{\CC}\).
Note that \(\SQ(X) = X\) in \(\ol{\CC}\) for each \(X\inn\CC\). 
Since \((\CC,\BE,\fs)\) is Frobenius, setting 
\(SX^0 \deff Z\) 
in \eqref{eqn:frobenius-injectives-exangle} yields a well-defined autoequivalence of \(\ol{\CC}\); see \cite[Prop.~3.7]{LiuZhou-frobenius-n-exangulated-categories}.
Given any distinguished \(n\)-exangle 
\eqref{eqn:frobenius-delta-exangle}, 
there is a morphism
\begin{equation}
\label{eqn:frobenius-morphism}
\begin{tikzcd}
X^{0} \arrow{r}{\tensor*[]{d}{_{X}^{0}}}\arrow[equals]{d}& X^{1} \arrow{r}{\tensor*[]{d}{_{X}^{1}}}\arrow{d}& \cdots \arrow{r}{\tensor*[]{d}{_{X}^{n-1}}}&  X^{n} \arrow{r}{\tensor*[]{d}{_{X}^{n}}}\arrow{d}& X^{n+1} \arrow{d}{\tensor*[]{d}{_{X}^{n+1}}} \arrow[dashed]{r}{\delta}&{}\\
X^{0} \arrow{r}{}& I^{1} \arrow{r}{}& \cdots \arrow{r}{}&  I^{n} \arrow{r}{}& S X^{0} \arrow[dashed]{r}{\tensor[]{\delta}{_{X^{0}}}}&{}
\end{tikzcd}
\end{equation}
of distinguished \(n\)-exangles in \(\CC\), 
using that \(I^{1}\) is \(\BE\)-injective 
and 
\cite[Prop.~3.6(1)]{HerschendLiuNakaoka-n-exangulated-categories-I-definitions-and-fundamental-properties}. 
Furthermore, there is a natural isomorphism 
\(\BE(-,-) \iso \ol{\CC}(\SQ-,S\SQ-)\) given by 
\(
\delta \mapsto \SQ(\tensor*[]{d}{_{X}^{n+1}})
\); see \cite[Lem.~3.11]{LiuZhou-frobenius-n-exangulated-categories}. It is shown in \cite[Thm.~3.13]{LiuZhou-frobenius-n-exangulated-categories} 
(see also Zheng--Wei \cite[Prop.~3.17]{ZhengWei-nplus2-angulated-quotient-categories}) 
that there is an \((n+2)\)-angulation of \((\ol{\CC}, S)\) consisting of \((n+2)\)-angles of the form 
\begin{equation}
\label{eqn:frobenius-nplus2-angle}
\begin{tikzcd}[column sep=1.3cm]
X^{0} \arrow{r}{\SQ(\tensor*[]{d}{_{X}^{0}})}& X^{1} \arrow{r}{\SQ(\tensor*[]{d}{_{X}^{1}})}& \cdots \arrow{r}{\SQ(\tensor*[]{d}{_{X}^{n-1}})}&  X^{n} \arrow{r}{\SQ(\tensor*[]{d}{_{X}^{n}})}& X^{n+1} \arrow{r}{\SQ(\tensor*[]{d}{_{X}^{n+1}})}& S X^{0}.
\end{tikzcd}
\end{equation}
This gives an \(n\)-exangulated category 
\((\ol{\CC}, \ol{\BE}, \ol{\fs})\), 
where 
\(\ol{\BE}(X^{n+1},X^{0}) 
    = \ol{\CC}(X^{n+1}, S X^{0})
\) 
and
\[
\ol{\fs}(\SQ(\tensor*[]{d}{_{X}^{n+1}}))
    = [\begin{tikzcd}[column sep=1.3cm]
        X^{0} \arrow{r}{\SQ(\tensor*[]{d}{_{X}^{0}})}& X^{1} \arrow{r}{\SQ(\tensor*[]{d}{_{X}^{1}})}& \cdots \arrow{r}{\SQ(\tensor*[]{d}{_{X}^{n-1}})}&  X^{n} \arrow{r}{\SQ(\tensor*[]{d}{_{X}^{n}})}& X^{n+1}
        \end{tikzcd}]
\]
whenever the morphism \(\SQ(\tensor*[]{d}{_{X}^{n+1}}) \inn\ol{\CC}(X^{n+1}, S X^{0})\) fits into an \((n+2)\)-angle \eqref{eqn:frobenius-nplus2-angle}.

Using \cref{thm:characterisation-of-n-exangulated-functors}, 
it is 
straightforward 
to check that there exists a natural transformation 
\(\Gamma \colon \BE(-,-) \Longrightarrow \ol{\BE}(\SQ-,\SQ-)\), 
such that 
\((\SQ,\Gamma) \colon (\CC,\BE,\fs) \to (\ol{\CC},\ol{\BE},\ol{\fs}) \) 
is an \(n\)-exangulated functor. 
Indeed, in the notation above, 
consider the functor \(\SE\colon \BE\dExt{\CC} \to \ol{\BE}\dExt{\ol{\CC}}\) given by 
\(\SE(\delta) = \SQ(\tensor*[]{d}{_{X}^{n+1}})\) on objects and 
by
\(
\SE(a,c) 
    = (\SQ a, \SQ c)
\) 
on morphisms. 
Note that \((\SQ(f^{0}), \SQ(f^{n}))\) is a morphism in \(\ol{\BE}\dExt{\ol{\CC}}\) by \cite[Lem.~3.11]{LiuZhou-frobenius-n-exangulated-categories}. 
It is clear that \(\SE\) respects morphisms and distinguished \(n\)-exangles over \(\SQ\).
By \cref{prop:F-additive-iff-E-additive-iff-E-exact}, we see that \(\SE\) is additive, and thus \cref{thm:characterisation-of-n-exangulated-functors} applies. 
Without the theorem, 
proving the naturality of \(\Gamma\)
requires a non-trivial amount of extra work.
\end{example}

In \cref{example:extriangulated-functor-purity-CGTcase,example:rest-yoneda-extrian-func},
we use \emph{relative} \(n\)-exangulated structures. 
We provide some key definitions here, 
but 
refer the reader to \cite[Sec.\ 3.2]{HerschendLiuNakaoka-n-exangulated-categories-I-definitions-and-fundamental-properties} 
for details. 
Let \((\CC, \BE,\fs)\) be an \(n\)-exangulated category and \(\CI\) be a 
subcategory of \(\CC\). 
Then the assignment \(\tensor[]{\BE}{_{\CI}}\colon \CC^{\op}\times\CC \to \Ab\) given by 
\(
\tensor[]{\BE}{_{\CI}}(C,A) \deff 
\Set{ \delta\inn\BE(C,A) | \tensor[^{\BE}]{\delta}{_{X}} = 0 \text{ for all } X\inn\CI}
\)
defines a subfunctor of \(\BE\). 
The restriction \(\tensor[]{\fs}{_{\CI}}\) of \(\fs\) to the extensions \(\delta\inn\tensor[]{\BE}{_{\CI}}(C,A)\) for \(A,C\inn\CC\) is an exact realisation of \(\tensor[]{\BE}{_{\CI}}\). 
Moreover, it follows from 
\cite[Props.~3.16, 3.19]{HerschendLiuNakaoka-n-exangulated-categories-I-definitions-and-fundamental-properties} that 
\((\CC,\tensor[]{\BE}{_{\CI}},\tensor[]{\fs}{_{\CI}})\) is an \(n\)-exangulated category, 
and 
by \cite[Thm.~2.12]{JorgensenShah-grothendieck-groups-of-d-exangulated-categories-and-a-modified-CC-map} that 
it is an \(n\)-exan\-gu\-lat\-ed subcategory of 
\((\CC,\BE,\fs)\).

The next two examples concern compactly generated triangulated categories. We recall relevant definitions in \cref{example:extriangulated-functor-purity-CGTcase}; for more details, see work of the first author \cite{Bennett-Tennenhaus-some-characterisations}, 
Garkusha--Prest \cite{Garkusha-Prest-Ziegler}, 
Krause \cite{Krause-Smashing-subcategories}, 
Neeman \cite{Neeman-the-grothendieck-duality-theorem-via-bousfields-techniques-and-brown-representability}, 
and 
Prest \cite{Prest-purity-spectra-localisation}. 
In this example we recall how equipping a compactly generated triangulated category with its class of pure-exact triangles results in an extriangulated substructure of the triangulated structure. 
This was first noted in 
Hu--Zhang--Zhou \cite[Rem.~3.3]{HuZhangZhou-proper-classes-and-gorensteinness-in-extriangulated-categories} from a different perspective. 
In \cref{example:rest-yoneda-extrian-func} we show that a certain restricted Yoneda functor is extriangulated, and preserves and reflects injective 
objects.

\begin{example}
\label{example:extriangulated-functor-purity-CGTcase}
Let \(\CC\) be a triangulated category with suspension functor \(\sus\). Then \(\CC\) has the structure of an extriangulated category, which we denote by \((\CC,\BE,\fs)\); see \cref{example:n+2-angulated-category-is-n-exangulated}.  
Suppose \(\CC\) has all set-indexed coproducts. 
Following Neeman \cite[pp.\ 210--211]{Neeman-the-grothendieck-duality-theorem-via-bousfields-techniques-and-brown-representability}, an object \(X\inn\CC\) is called \emph{compact} if the functor \(\CC(X,-)\) commutes with all coproducts, and we denote by \(\CC^{\raisebox{0.5pt}{\scalebox{0.6}{\textup{c}}}}\) the 
subcategory of \(\CC\) consisting of 
compact objects. 
The category \(\CC\) is \emph{compactly generated} provided there is a set \(\CS\) of objects in  \(\CC^{\raisebox{0.5pt}{\scalebox{0.6}{\textup{c}}}}\) such that, for each \(A\inn\CC\), if \(\CC(X,A)=0\) for all \(X\inn\CS\), then \(A\) must 
be the zero object. 
Suppose that \(\CC\) is compactly generated.

As defined 
in 
\cite[Def.~1.1(3)]{Krause-Smashing-subcategories}, a distinguished triangle 
\begin{equation}
\label{eqn:delta-triangle-pure-example}
\begin{tikzcd}
A\arrow{r}& B\arrow{r}& C\arrow{r}{\delta}& \sus A
\end{tikzcd}
\end{equation}
in \(\CC\) is called \emph{pure-exact} if, 
for any object \(X\inn\CC^{\raisebox{0.5pt}{\scalebox{0.6}{\textup{c}}}}\), 
there is an induced short exact sequence 
\(
\begin{tikzcd}[column sep=0.5cm]
0 \arrow{r} & \CC(X,A)\arrow{r} & \CC(X,B)\arrow{r} & \CC(X,C) \arrow{r} & 0
\end{tikzcd}
\)
of abelian groups. 
A morphism \(\delta\colon C\to \sus A\) in \(\CC\) is \emph{phantom} if 
\(\CC(X,\delta)\) 
is the zero morphism \(\CC(X,C) \to \CC(X,\sus A)\) for all \(X\inn\CC^{\raisebox{0.5pt}{\scalebox{0.6}{\textup{c}}}}\) 
(see \cite[p.~104]{Krause-Smashing-subcategories}). 
For a morphism 
\(\delta\colon C \to \sus A\) that fits into a distinguished triangle \eqref{eqn:delta-triangle-pure-example}, 
there is a natural transformation
\(
\tensor[^{\BE}]{\delta}{_{-}} = \CC(-,\delta)
\colon \CC(-,C)\Rightarrow  \CC(-,\sus A)
\)
(see \cref{sec:n-exangulated-categories}).
Consequently, the morphism \(\delta\) is phantom if and only if 
\(\tensor[^{\BE}]{\delta}{_{X}} = 0\) 
for all \(X\inn\CC^{\raisebox{0.5pt}{\scalebox{0.6}{\textup{c}}}}\).
By Krause 
\cite[Lem.~1.3]{Krause-localization-theory-for-triangulated-categories}, one has that the distinguished triangle \eqref{eqn:delta-triangle-pure-example}
is pure-exact 
if and only if \(\delta\colon C\to \sus A\) is phantom.
This implies that 
\((\CC,\tensor[]{\BE}{_{\CC^{\textup{c}}}}, \tensor[]{\fs}{_{\CC^{\textup{c}}}})\) is an extriangulated category in which 
\(
\lan \begin{tikzcd}[column sep=0.5cm]
A\arrow{r}{f}& B\arrow{r}{g}& C
\end{tikzcd}, \delta\ran
\) 
is a distinguished extriangle if and only if 
\(\begin{tikzcd}[column sep=0.5cm]
A\arrow{r}{f}& B\arrow{r}{g}& C\arrow{r}{\delta}& \sus A
\end{tikzcd}\) 
is a pure-exact triangle in \(\CC\). 
By our discussion above this example, we have that \((\CC,\tensor[]{\BE}{_{\CC^{\textup{c}}}}, \tensor[]{\fs}{_{\CC^{\textup{c}}}})\) is an \emph{extriangulated} (i.e.\ 1-exangulated) subcategory of the (ex)triangulated category 
\((\CC,\BE,\fs)\).
Furthermore, if \(\CC\) contains a non-zero object, then \((\CC,\tensor[]{\BE}{_{\CC^{\textup{c}}}}, \tensor[]{\fs}{_{\CC^{\textup{c}}}})\) is not triangulated;
and if there is a non-split pure-exact distinguished triangle, then it is not exact (cf.\ \cite[Rem.~3.3]{HuZhangZhou-proper-classes-and-gorensteinness-in-extriangulated-categories}). 
\end{example}

Building on \cref{example:extriangulated-functor-purity-CGTcase}, the next example shows that a certain restricted Yoneda functor 
\(\SY \colon \CC \to \rMod{\CC^{\textup{c}}}\) 
preserves extriangles. 
Like in \cref{example:frobenius-n-exact-to-n-angulated-quotient}, the characterisation of \(n\)-exangulated functors 
of 
in \cref{thm:characterisation-of-n-exangulated-functors} makes light work of this. 
We also show that \(\SY\) both preserves and reflects injective 
objects. 
Relevant definitions are provided as needed.

\begin{example}
\label{example:rest-yoneda-extrian-func}
Suppose that \(\CC\) is a triangulated category with suspension functor \(\sus\). Assume that 
\(\CC\) has all set-indexed coproducts and  
is compactly generated. 
Denote by \((\CC,\BE,\fs)\) the extriangulated category arising from the triangulated structure on \(\CC\). 
In \cref{example:extriangulated-functor-purity-CGTcase} we showed that the relative structure induced by the 
subcategory \(\CC^{\raisebox{0.5pt}{\scalebox{0.6}{\textup{c}}}}\) of compact objects in \(\CC\) 
gives an extriangulated subcategory 
\((\CC,\tensor[]{\BE}{_{\CC^{\textup{c}}}},\tensor[]{\fs}{_{\CC^{\textup{c}}}})\).
Recall that the distinguished extriangles of \((\CC,\tensor[]{\BE}{_{\CC^{\textup{c}}}},\tensor[]{\fs}{_{\CC^{\textup{c}}}})\) 
correspond to the pure-exact triangles in \(\CC\).

Assume that \(\CC^{\raisebox{0.5pt}{\scalebox{0.6}{\textup{c}}}}\) is skeletally small. 
Let \(\rMod\CC^{\raisebox{0.5pt}{\scalebox{0.6}{\textup{c}}}}\) be the category of additive functors \((\CC^{\raisebox{0.5pt}{\scalebox{0.6}{\textup{c}}}})^{\op}\to \Ab\), which 
is a Grothendieck abelian category; 
see \cite[Thm.~10.1.3]{Prest-purity-spectra-localisation}. 
In particular, it has enough injectives (see \cite[Thm.~E.1.8]{Prest-purity-spectra-localisation}), so 
for all \(L,N\inn\rMod{\CC^{\textup{c}}}\), the collection of equivalence classes of short exact sequences of the form 
\(
\begin{tikzcd}[column sep=0.5cm]
0 \arrow{r}
    & L \arrow{r}
    & - \arrow{r}
    & N \arrow{r}
&0
\end{tikzcd}
\)
is a set. 
Thus, we have that the \(\Ext\)-bifunctor 
\(
\BE' 
    \deff 
    \Ext_{\rMod\CC^{\textup{c}}}^{1}        \colon
    (\rMod\CC^{\raisebox{0.5pt}{\scalebox{0.6}{\textup{c}}}})^{\op}\times \rMod\CC^{\raisebox{0.5pt}{\scalebox{0.6}{\textup{c}}}}\to \Ab
\)
is well-defined. 
Consequently, 
equipping \(\rMod\CC^{\raisebox{0.5pt}{\scalebox{0.6}{\textup{c}}}}\) with \(\BE'\) and the canonical realisation \(\fs'\) 
yields an extriangulated category 
\((\rMod\CC^{\raisebox{0.5pt}{\scalebox{0.6}{\textup{c}}}}, \BE', \fs')\).

Write \(\SY\colon\CC\to  \rMod\CC^{\raisebox{0.5pt}{\scalebox{0.6}{\textup{c}}}}\) for the \emph{restricted Yoneda functor}, which is defined on objects by 
\(\SY(Z) = \restr{\CC(-,Z)}{\CC^{\textup{c}}}\) 
(see \cite[p.\ 105]{Krause-Smashing-subcategories}). 
In the rest of this example, we show the following two statements. 
\begin{enumerate}[label=\textup{(\roman*)}]
    \item There is an extriangulated functor 
        \(
            (\SY,\Gamma)\colon 
            (\CC,\tensor[]{\BE}{_{\CC^{\textup{c}}}},\tensor[]{\fs}{_{\CC^{\textup{c}}}}) 
                \to  
            (\rMod\CC^{\raisebox{0.5pt}{\scalebox{0.6}{\textup{c}}}}, \BE', \fs')
        \).
    
    \item  The functor \(\SY\) both preserves and reflects injective objects.
\end{enumerate}
\medskip

Let us first prove (i). 
By \cref{thm:characterisation-of-n-exangulated-functors}, it is 
sufficient to define an additive functor 
\begin{center}
\(
\SE\colon\tensor[]{\BE}{_{\CC^{\textup{c}}}}\dExt{\CC}\to\BE'\dExt{\rMod\CC^{\raisebox{0.5pt}{\scalebox{0.6}{\textup{c}}}}}
\)
\end{center} 
which respects both morphisms and distinguished extriangles over \(\SY\). 
Given 
\(
\delta\inn\tensor[]{\BE}{_{\CC^{\textup{c}}}}(C,A)
\), there is a pure-exact distinguished triangle 
\(
\begin{tikzcd}[column sep=0.5cm]
A\arrow{r}& B\arrow{r}& C\arrow{r}{\delta}& \sus A
\end{tikzcd}
\)
that is unique up to isomorphism. 
Thus, we define \(\SE\) on objects by setting 
\[
\SE(\delta) 
    \deff [
    \begin{tikzcd}
    0 \arrow{r}& \SY(A)\arrow{r}& \SY(B)\arrow{r}& \SY(C)\arrow{r}& 0
    \end{tikzcd}
    ].
\]
A morphism \((a,c)\colon \delta \to \delta'\) in \(\tensor[]{\BE}{_{\CC^{\textup{c}}}}\dExt{\CC}\) extends to a morphism of triangles between the pure-exact triangles associated to \(\delta\) and \(\delta'\). 
Hence, the pair 
\((\SY a, \SY c)\colon\SE(\delta)\to\SE(\delta')\) is a morphism 
in \(\BE'\dExt{\rMod\CC^{\raisebox{0.5pt}{\scalebox{0.6}{\textup{c}}}}}\), 
and we can define \(\SE(a,c)\deff(\SY a, \SY c)\). 
It is straightforward to check that \(\SE\) is a functor, and it respects morphisms and distinguished extriangles over \(\SY\) by construction. 
Since \(\SY\) is additive, so is \(\SE\) 
by \cref{prop:F-additive-iff-E-additive-iff-E-exact}.  
This verifies (i). 
Note that if \(\CC\) contains a non-zero object and at least one non-split 
pure exact triangle, 
then \((\SY,\tensor[]{\Gamma}{_{(\SY,\SE)}})\) is neither an exact functor nor a triangulated functor.

Now we show (ii).
A morphism \(f\colon A\to B\) in the triangulated category \(\CC\) is a \emph{pure monomorphism} if the morphism 
\(\SY(f\tensor[]{)}{_{X}} = \CC(X,f)\colon \CC(X,A)\to\CC(X,B)\) 
is injective for each compact object \(X\inn\CC^{\raisebox{0.5pt}{\scalebox{0.6}{\textup{c}}}}\) 
(see \cite[Def.~1.1(1)]{Krause-Smashing-subcategories}). 
An object \(C\inn\CC\) is \emph{pure-injective} provided any pure monomorphism 
with domain \(C\) splits (see \cite[Def.~1.1(2)]{Krause-Smashing-subcategories}). 
Following the dual of \cite[Def.~3.23, Prop.~3.24]{NakaokaPalu-extriangulated-categories-hovey-twin-cotorsion-pairs-and-model-structures}, 
an object \(I\) in an extriangulated category 
\((\CD,\BF,\ft)\) 
is \emph{\(\BF\)-injective} 
if and only if \(\BF(D,I)=0\) for all \(D\inn\CD\). 
Thus, it follows from \cite[Lem.~1.4]{Krause-Smashing-subcategories} that 
an object in the extriangulated category  
\((\CC,\tensor[]{\BE}{_{\CC^{\textup{c}}}}, \tensor[]{\fs}{_{\CC^{\textup{c}}}})\) 
is \(\tensor[]{\BE}{_{\CC^{\textup{c}}}}\)-injective if and only if 
it is pure-injective in the triangulated category \((\CC,\BE,\fs)\). 
Furthermore, 
by \cite[Thm.~1.8]{Krause-Smashing-subcategories}, 
we have that \(A\inn\CC\) is pure-injective in \((\CC,\BE,\fs)\) 
if and only if 
\(\SY A\) is \(\BE'\)-injective in \((\rMod\CC^{\raisebox{0.5pt}{\scalebox{0.6}{\textup{c}}}}, \BE', \fs')\). 
That is, the extriangulated functor 
\(
(\SY,\Gamma)
\) 
from 
\(
(\CC,\tensor[]{\BE}{_{\CC^{\textup{c}}}},\tensor[]{\fs}{_{\CC^{\textup{c}}}}) 
\)  
to 
\(
    (\rMod\CC^{\raisebox{0.5pt}{\scalebox{0.6}{\textup{c}}}}, \BE', \fs')
\)
preserves and reflects injective objects. 
\end{example}

\begin{rem}
Suppose that in \cref{example:extriangulated-functor-purity-CGTcase,example:rest-yoneda-extrian-func} we replace the compactly generated triangulated category with a \emph{finitely accessible} 
category, 
and 
also that we 
swap compact objects with \emph{finitely presented} objects. 
With this exchange, 
one can make analogous observations about the restricted Yoneda functor to those made in \cref{example:rest-yoneda-extrian-func} using results of Crawley-Boevey \cite{Crawley-Boevey-locally-finitely-presented-additive-categories}.  
We omit this example, however, 
since 
the restricted Yoneda functor in this case is in fact an exact functor.
\end{rem}


{\setstretch{1}\begin{acknowledgements}
The authors are very grateful to, and thank: 
Thomas Br{\"{u}}stle for their helpful comments that led to 
\cref{prop:ECC-is-a-category} in its present form;
Hiroyuki Nakaoka for their time and a discussion that led to the development of \cref{section:4}; and 
Peter J{\o{}}rgensen 
for a useful discussion relating to 
\cref{example:delta-functors}.

The first author is grateful to have been supported during part of this work by the Alexander von Humboldt Foundation in the framework of an Alexander von Humboldt Professorship endowed by the German Federal Ministry of Education.
Parts of this work were carried out while the second author participated in the Junior Trimester Program ``New Trends in Representation Theory'' at the Hausdorff Research Institute for Mathematics in Bonn. She would like to thank the Institute for excellent working conditions.
The third author is grateful to have been supported by Norwegian Research Council project 301375, ``Applications of reduction techniques and computations in representation theory''.
The fourth author gratefully acknowledges support from: the Danish National Research Foundation (grant DNRF156); the Independent Research Fund Denmark (grant 1026-00050B); the Aarhus University Research Foundation (grant AUFF-F-2020-7-16); the Engineering and Physical Sciences Research Council (grant EP/P016014/1); and the London Mathematical Society with support from Heilbronn Institute for Mathematical Research (grant ECF-1920-57). 
\end{acknowledgements}}
{\setstretch{1}

}
\end{document}